\documentclass[reqno]{amsart}
\usepackage[english]{babel}
\usepackage{amscd,amssymb,amsmath,amsfonts,latexsym,amsthm}
\usepackage{ulem}
\usepackage{inputenc}
\usepackage{graphicx,color}
\usepackage{longtable}
\usepackage{multirow}
\usepackage{amsmath}
\usepackage{graphicx}
\usepackage{float}
\tolerance=5000 \topmargin -1cm \oddsidemargin=0,5cm
\evensidemargin=-0,2cm \textwidth 15.6cm \textheight 24cm
\linespread{1.0}
\vfuzz2pt 
\hfuzz2pt 
\newtheorem{thm}{Theorem}[section]
\newtheorem{cor}[thm]{Corollary}
\newtheorem{lem}[thm]{Lemma}
\newtheorem{prop}[thm]{Proposition}
\newtheorem{defn}[thm]{Definition}

\numberwithin{equation}{section}

\begin{document}

\title[Abelian extensions of five-dimensional solvable Leibniz algebras]{Abelian extensions of five-dimensional solvable Leibniz algebras}

\author[Khudoyberdiyev A.Kh.]{Abror Khudoyberdiyev$^{1,2}$}

\address{$^1$V.I.Romanovskiy Institute of Mathematics Uzbekistan Academy of Sciences. 9,  University street, 100174, Tashkent, Uzbekistan}

\address{$^2$National University of Uzbekistan named after Mirzo Ulugbek. 4, University street, 100174, Tashkent, Uzbekistan}

\author[Sheraliyeva S.A.]{Surayyo Sheraliyeva$^1$}

\email{{\tt khabror@mail.ru, abdiqodirovna@mail.ru}}

\begin{abstract}
In this work, we extend the central extension method for solvable Leibniz algebras. Using this method, a complete classification of one-dimensional abelian extensions of five-dimensional solvable Leibniz algebras with a non-trivial three-dimensional nilradical is obtained.
Furthermore, we explore extensions of solvable Leibniz algebras whose nilradical is null-filiform, establishing that, in this case, there exists a unique solvable abelian extension.
\end{abstract}

\maketitle

\textbf{MSC2020:} 17A32, 17A36, 17B30.

\textbf{Key words and phrases:} Leibniz algebras, solvability, nilpotency,  nilradical, extension, automorphism,  representation,  $2$-cocycles.

\section{Introduction}
Leibniz algebras were introduced by Bloch in 1965 \cite{Bloch}, who referred to them as $D$-algebras. They were later studied by Loday and Cuvier \cite{Cuv, Loday} as a non-antisymmetric analog of Lie algebras.
Since then, many researchers have worked on Leibniz algebras, and as a result, they have found analogues of important theorems in Lie theory, such as the analogue of Levi's theorem, which was proven by Barnes \cite{Barnes}. He showed that any finite-dimensional complex Leibniz algebra decomposes into a semidirect sum of its solvable radical and a semisimple Lie algebra. Thus, the primary challenge in the classification of finite-dimensional complex Leibniz algebras lies in studying the solvable part.
To classify solvable Leibniz algebras, it is necessary to consider nilpotent Leibniz algebras, which serve as the nilradicals, just as in the case of Lie algebras.

Since Loday has reintroduced the concept, numerous researchers have contributed to the classification of various classes of finite-dimensional Leibniz algebras \cite{Omirov, AyupovBook, Casas}.
A complete classification of all two-dimensional algebras is provided in the works of Loday, whereas the first constructions of three-dimensional Leibniz algebras were presented by Ayupov and Omirov \cite{Ayupov}. A complete list of three-dimensional Leibniz algebras is presented in the works of  Casas et al. \cite{Casas}. A comprehensive classification of four-dimensional nilpotent Leibniz algebras is provided in the following works \cite{Albeverio, Kaygorodov}. The full classification of four-dimensional, non-nilpotent solvable Leibniz algebras is established in \cite{Canete}, while the five-dimensional solvable Leibniz algebras with a three-dimensional nilradical are explicitly determined in \cite{Khudoyberdiyev}.

The extension method is widely regarded as one of the most effective approaches for classifying algebras. The central extension method enables the classification of all finite-dimensional nilpotent algebras. However, since central extensions produce only algebras with a nonzero center, not all solvable algebras can be obtained through this method. Therefore, Sund, in \cite{Sund2}, generalized the central extension method for solvable Lie algebras, demonstrating that all solvable algebras with non-split nilradicals can be obtained through this approach. By applying the method developed by Sund, it was determined that the extensions of solvable Lie algebras with a filiform nilradical \cite{Sheraliyeva1}.

The study of abelian extensions in the context of Leibniz algebras was initially introduced in \cite{Casas1}. Central extensions of Leibniz algebras were examined in \cite{Rakhimov}, where a complete classification of central extensions for null-filiform Leibniz algebras was provided. Additionally, the notion of non-abelian extensions was introduced in \cite{Liu}. It is important to note that the extensions of solvable Leibniz algebras and superalgebras have been examined in \cite{Camacho} and \cite{Sheraliyeva}, with a focus on the central extensions of solvable algebras in these works. As previously mentioned, since central extensions do not generate all solvable algebras, it is necessary to adapt the method developed by Sund for solvable Lie algebras to the context of Leibniz algebras. In this work, we generalize this method for Leibniz algebras. To achieve this, we employ the notion of non-abelian extensions introduced in \cite{Liu}.

In Section 2, we demonstrate that solvable Leibniz algebras with a non-split nilradical can be characterized as extensions of lower-dimensional solvable Leibniz algebras.
In Section 3, we investigate extensions of solvable Leibniz algebras with a null-filiform nilradical and establish that, in this case, there exists a unique solvable abelian extension.
In Section 4, we provide a complete classification of one-dimensional abelian extensions of five-dimensional solvable Leibniz algebras with a non-trivial three-dimensional nilradical.

\section{ Preliminaries}

In this section, we recall some fundamental notions and concepts that will be used throughout the paper.
All the algebras below will be over the complex field, and all the linear maps will be $\mathbb C$-linear unless otherwise stated. The notation $\langle S \rangle$ means the $\mathbb C$-subspace generated by $S$.

\begin{defn}
A vector space with bilinear bracket $(\mathfrak{g},[\cdot,\cdot]) $ is called a (right) Leibniz algebra if for any  $x,y,z\in \mathfrak{g}$ to the so-called Leibniz identity
 \begin{equation*}{\label{l1}} [[x,y],z] = [x,[y,z]]+ [[x,z],y],\end{equation*}
holds.\end{defn}

For a given Leibniz algebra $(\mathfrak{g},[\cdot,\cdot])$, the sequences of
two-sided ideals are defined recursively as follows:
$$\mathfrak{g}^1=\mathfrak{g}, \ \mathfrak{g}^{k+1}=[\mathfrak{g}^k,\mathfrak{g}],  \ k \geq 1, \qquad \qquad
\mathfrak{g}^{[1]}=\mathfrak{g}, \ \mathfrak{g}^{[s+1]}=[\mathfrak{g}^{[s]},\mathfrak{g}^{[s]}], \ s \geq 1.$$

These are said to be the lower central and the derived series of $\mathfrak{g}$, respectively.

\begin{defn} A Leibniz algebra $\mathfrak{g}$ is said to be
nilpotent (respectively, solvable), if there exists $n\in\mathbb
N$ $(m\in\mathbb N),$ such that $\mathfrak{g}^{n}=\{0\}$ $(\mathfrak{g}^{[m]}=\{0\})$.\end{defn}

Note that the sum of two nilpotent ideals of the Leibniz algebra is also nilpotent. Therefore, the maximal nilpotent ideal always exists.
The maximal nilpotent ideal of a Leibniz algebra is said to be the nilradical of the algebra.

\begin{defn}
A Leibniz algebra $\mathfrak{g}$ is called null-filiform if  $\operatorname{dim}\mathfrak{g}^{i}=n+1-i$ for $1\leq i\leq n+1$,
where $n=\operatorname{dim}\mathfrak{g}.$\end{defn}

It is known that an arbitary $n$-dimensional null-filiform Leibniz algebra is isomorphic to the algebra (see \cite{Omirov})
  $$NF_n: [e_i,e_1]=e_{i+1}, \quad 1 \leq i \leq n-1.$$

Solvable Leibniz algebras with nilradical $NF_n$ are classified in \cite{Karimjonov}, and it was shown that up to isomorphism there exists a unique solvable Leibniz algebra
 $$R=\begin{cases}
[e_i,e_1]=e_{i+1}, & 1\leq i\leq n-1,\\
[e_i,x]=-ie_i, & 1\leq i \leq n,\\
[x,e_1]=e_1.\end{cases}$$

\begin{defn} A representation of a Leibniz algebra $(\mathfrak{g},[\cdot,\cdot])$ is a triple $(V,l,r),$ where $V$ is a vector space equipped with two linear maps $l:\mathfrak{g}\rightarrow \mathfrak{gl}(V)$ and $r:\mathfrak{g}\rightarrow \mathfrak{gl}(V),$ such that the following equalities hold:
 \begin{equation}{\label {c1}}r_{[x,y]}=r_y \circ r_x - r_x \circ r_y,\quad l_{[x,y]}= r_y \circ l_x  - l_x \circ r_y,\end{equation}
 \begin{equation}{\label {c3}}l_x\circ l_y=-l_x\circ r_y, \quad \forall x,y\in \mathfrak{g}.\end{equation}
 \end{defn}

\begin{defn} The extension of an algebra $\mathfrak{g}$ by $V$ is a short exact sequence
$$0\longrightarrow V\mathop{\longrightarrow}\limits^{\varepsilon} \widehat{\mathfrak{g}} \mathop{\longrightarrow}\limits^{\lambda} \mathfrak{g} \longrightarrow0,$$
such that $Im \varepsilon= Ker \lambda$. The extension is called a central, if $ Ker \lambda = Z(\widehat{\mathfrak{g}}),$ i.e., $Z(\widehat{\mathfrak{g}}) = Im \varepsilon \simeq V.$
\end{defn}

In \cite{Liu}, J.Liu, Y.Sheng, and Q.Wang introduced the concept of a non-abelian extension for left Leibniz algebras and gave a necessary and sufficient condition for the given extension to be a Leibniz algebra. Since we are considering the right Leibniz algebras in this article, we present these results for the right Leibniz algebras.

Let $\mathfrak{g},$ $\mathfrak{h}$ be Leibniz algebras, and $l:\mathfrak{g}\rightarrow \mathfrak{gl}(\mathfrak{h}),$ $r:\mathfrak{g}\rightarrow \mathfrak{gl}(\mathfrak{h})$ and
  $\omega: \mathfrak{g} \otimes \mathfrak{g} \rightarrow \mathfrak{h}$ is a bilinear map.
 For the vector space $\mathfrak{g}\oplus \mathfrak{h}$, we define the multiplication
\begin{equation}{\label{fff}}[x+a, y+b]_{(l,r,\omega)}=[x,y]_g+\omega(x,y)+l_xb+r_ya + [a,b]_{\mathfrak{h}}, \quad x,y\in \mathfrak{g}, a, b \in \mathfrak{h}.\end{equation}

\begin{prop} With the above notations, $(\mathfrak{g}\oplus \mathfrak{h}, [\cdot,\cdot]_{l,r,\omega})$ is a Leibniz algebra if and only if $l,r,\omega$ satisfy the following are identities:
\begin{equation*}{\label {pr1}}r_x[a, b]=[r_x a, b]+[a, r_x b],\end{equation*}
\begin{equation*}{\label {pr2}}l_x[a, b]=[l_x a, b]-[l_xb, a],\end{equation*}
\begin{equation*}{\label {pr3}}[a, l_xb+r_xb]=0,\end{equation*}
\begin{equation*}{\label {pr4}}r_x \circ r_y - r_y \circ r_x=r_{[y,x]}+\operatorname{ad}_{\omega(x,y)}^R,\end{equation*}
\begin{equation*}{\label {pr5}}r_x \circ l_y - l_y \circ r_x = l_{[y,x]}+\operatorname{ad}_{\omega(x,y)}^L,\end{equation*}
\begin{equation*}{\label {pr6}}l_x(l_y+r_y)=0,\end{equation*}
\begin{equation*}{\label {pr7}}\omega([x,y],z)-\omega(x,[y,z])-\omega([x,z],y)=l_x\omega(y,z)+r_y\omega(x,z)-r_z\omega(x,y),\end{equation*}
where $\operatorname{ad}_{x}^R(y) = [y,x],$ $\operatorname{ad}_{x}^L(y) = [x,y].$

\end{prop}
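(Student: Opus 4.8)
The plan is to verify that the Leibniz identity for the bracket $[\cdot,\cdot]_{(l,r,\omega)}$ on $\mathfrak{g}\oplus\mathfrak{h}$ is equivalent, component by component, to the seven displayed equalities. I would take arbitrary elements $X=x+a$, $Y=y+b$, $Z=z+c$ with $x,y,z\in\mathfrak{g}$ and $a,b,c\in\mathfrak{h}$, and expand both sides of
\begin{equation*}
[[X,Y],Z]=[X,[Y,Z]]+[[X,Z],Y]
\end{equation*}
using the definition \eqref{fff}. Each side lands in $\mathfrak{g}\oplus\mathfrak{h}$; the $\mathfrak{g}$-component is automatically the Leibniz identity in $\mathfrak{g}$ and contributes nothing new, so the whole content sits in the $\mathfrak{h}$-component. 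The strategy is then to collect the resulting $\mathfrak{h}$-valued expression and sort its terms by their "type", i.e. by how many of the arguments $a,b,c$ they involve and in which slots the $\mathfrak{g}$-elements $x,y,z$ appear. Because $x,y,z,a,b,c$ are independent, the identity holds for all choices if and only if the terms of each type vanish separately, and each type yields exactly one of the seven equations.

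Concretely, I would organize the bookkeeping as follows. The terms that are trilinear in $a,b,c$ (i.e. involve $[[a,b]_\mathfrak{h},c]_\mathfrak{h}$ etc.) just reproduce the Leibniz identity in $\mathfrak{h}$ and drop out. The terms linear in the $\mathfrak{g}$-variable and bilinear in $\mathfrak{h}$: those with $x$ acting (via $l$ or $r$) on a bracket $[b,c]_\mathfrak{h}$ compared with brackets of $l_xb,r_xb$ with the remaining $\mathfrak{h}$-element give the first three equations $r_x[a,b]=[r_xa,b]+[a,r_xb]$, $l_x[a,b]=[l_xa,b]-[l_xb,a]$, and $[a,l_xb+r_xb]=0$ (the last coming from the slot where $l_y+r_y$ hits an $\mathfrak{h}$-element and must annihilate an external $\mathfrak{h}$-bracket). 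The terms bilinear in $\mathfrak{g}$ and linear in $\mathfrak{h}$: matching coefficients of $a$, of $b$, and of $c$ separately produces $[r_x,r_y]-r_{[y,x]}=\operatorname{ad}^R_{\omega(x,y)}$, $[r_x,l_y]-l_{[y,x]}=\operatorname{ad}^L_{\omega(x,y)}$, and $l_x(l_y+r_y)=0$. Finally the terms trilinear in $\mathfrak{g}$ with no $\mathfrak{h}$-argument give the $2$-cocycle condition $\omega([x,y],z)-\omega(x,[y,z])-\omega([x,z],y)=l_x\omega(y,z)+r_y\omega(x,z)-r_z\omega(x,y)$. Conversely, assuming all seven equations, re-assembling these pieces shows the Leibniz identity holds identically, so the equivalence goes both ways.

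For the reverse direction and for the slots producing the pure-$\mathfrak{h}$ equations, one also needs that $(\mathfrak{h},[\cdot,\cdot]_\mathfrak{h})$ is itself a Leibniz algebra (so $[[a,b],c]=[a,[b,c]]+[[a,c],b]$) and that $\mathfrak{g}$ is a Leibniz algebra — both are hypotheses — together with the fact that $r_y a$ and $l_x b$ genuinely land in $\mathfrak{h}$ so that further brackets with $\mathfrak{h}$-elements make sense. I would also double-check the placement of signs and the ordering $[y,x]$ versus $[x,y]$ in equations (pr4)–(pr5), since the right Leibniz identity (as opposed to the left one used in \cite{Liu}) is exactly where the index order flips; this is the only place where translating Liu–Sheng–Wang's left-algebra statement into the right-algebra convention requires care.

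The main obstacle I expect is purely organizational rather than conceptual: the full expansion of $[[X,Y],Z]-[X,[Y,Z]]-[[X,Z],Y]$ produces on the order of thirty to forty $\mathfrak{h}$-valued summands, and the delicate part is making sure that when an inner bracket such as $[X,Y]$ is fed as the \emph{first} argument of an outer bracket, its $\mathfrak{h}$-component $\omega(x,y)+l_xb+r_ya+[a,b]_\mathfrak{h}$ is correctly acted on by $r_z$ (the right-multiplication by the $\mathfrak{g}$-part of $Z$) and simultaneously the $\mathfrak{g}$-component $[x,y]_g$ contributes $\omega([x,y]_g,z)$ and $l_{[x,y]_g}c$, whereas when it is fed as the \emph{second} argument the roles of $l$ and $r$ swap. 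Keeping a consistent template for "multiply $U+u$ by $V+v$ on the left" and applying it mechanically three times per monomial, then grouping, is the way to avoid errors; once grouped, reading off the seven identities is immediate.
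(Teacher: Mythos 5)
Your plan is correct, and it is in fact the only real proof available here: the paper states this proposition without proof, importing it from Liu--Sheng--Wang \cite{Liu} (where it is proved for left Leibniz algebras by exactly the expansion you describe) and translating to the right-Leibniz convention. Two bookkeeping remarks on your outline. First, the homogeneous pieces do not each produce one of the listed equalities verbatim; they produce an equivalent system. For instance the $(y;a,c)$-slot yields $[r_ya,c]-[a,l_yc]-r_y[a,c]=0$, which becomes the third equality only after substituting the first, and the coefficient of $c$ in the two-$\mathfrak{g}$/one-$\mathfrak{h}$ block yields $l_xl_y+r_yl_x-l_{[x,y]}=\operatorname{ad}^L_{\omega(x,y)}$, which gives the sixth equality only after subtracting the coefficient-of-$b$ relation. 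So the correct claim is that the two systems of conditions are equivalent, not that they match slot by slot; you gesture at this for the third equality but should say it explicitly. Second, your instinct to re-check the argument order in the fourth and fifth equalities is well placed: carrying out the expansion, the $\omega$-term in those two identities arises as the $\omega$-part of the inner bracket $[X,Y]$ being bracketed with the outer $\mathfrak{h}$-element, and one obtains $[r_x,r_y]-r_{[y,x]}=\operatorname{ad}^R_{\omega(y,x)}$ and $[r_x,l_y]-l_{[y,x]}=\operatorname{ad}^L_{\omega(y,x)}$, with the arguments of $\omega$ transposed relative to what is printed in the statement. This is precisely the kind of slip the left-to-right translation invites; it is harmless for the rest of the paper, which only uses the case of abelian $\mathfrak{h}$ where $\operatorname{ad}^R=\operatorname{ad}^L=0$, but your write-up should record the version the computation actually delivers.
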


The Leibniz algebra $(\mathfrak{g}\oplus \mathfrak{h}, [\cdot,\cdot]_{l,r,\omega})$ will be denoted by $\mathfrak{g}(\omega,l,r).$
In case of  $\mathfrak{h}$ is abelian, we get that $l:\mathfrak{g}\rightarrow \mathfrak{gl(h)}$ and $r:\mathfrak{g}\rightarrow \mathfrak{gl(h)}$ is a representation and $\omega: \mathfrak{g} \otimes \mathfrak{g} \rightarrow \mathfrak{h}$, a bilinear map satisfying
\begin{equation}{\label {c4}}\omega([x,y],z)-\omega(x,[y,z])-\omega([x,z],y)-l_x\omega(y,z)-r_y\omega(x,z)+r_z\omega(x,y)=0.\end{equation}

The bilinear maps $\omega$  satisfying \eqref{c4} we call a 2-cocycles on $\mathfrak{g}$ with respect to the pair $(l,r)$ and the set of all such 2-cocycles denote by $Z^2(\mathfrak{g},l,r).$

The 2-coboundary on $\mathfrak{g}$ with respect to the pair $(l,r)$ is defined as follows
$$df(x,y)=f([x,y])-l_{\varphi(x)}f(y)-r_{\varphi (y)}f(x), \quad x,y\in \mathfrak{g}, $$
where $\varphi\in \operatorname{Aut}(\mathfrak{g})$ and $f \in \operatorname{Hom}(\mathfrak{g},\mathfrak{h})$. The set of such bilinear maps is denoted by $B^2(\mathfrak{g},l,r).$

Let $\mathfrak{g}$ be a solvable Leibniz algebra, $N$ its nilradical, and $Z(N)$ the center of $N$.
Suppose that the Leibniz algebra $\widehat{\mathfrak{g}}=\mathfrak{g}(\omega,l,r)$ is the extension of the given algebra $\mathfrak{g}$ generated by $\omega, l, r$, and denote by $\widehat{N}$ a nilradical of the algebra $\widehat{\mathfrak{g}}$.
Let $\omega^0=\omega|_{N\times N}$ and $l^0=l |_N,$ $r^0=r|_N$. Denote $$G_{\omega^0}=\{X\in N:  \omega^0(N,X)=(0) \}.$$

\begin{lem}
The nilradical $\widehat{N}$ of $\widehat{\mathfrak{g}}$ is a central extension of $N$ by $\mathfrak{h}$  if and only if $N \subseteq \operatorname{ker} l \cap \operatorname{ker} r.$

 Moreover, if  $N \subseteq \operatorname{ker} l \cap \operatorname{ker} r$, then $Z(\widehat{N}) =\mathfrak{h}$ if and only if $G_{\omega^0}\cap Z(N)=0.$
\end{lem}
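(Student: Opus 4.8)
The plan is to pin down the nilradical $\widehat{N}$ of $\widehat{\mathfrak{g}}=\mathfrak{g}(\omega,l,r)$ as the subspace $N\oplus\mathfrak{h}$ (here $\mathfrak{h}$ is abelian, as in the setting above; for non-abelian $\mathfrak{h}$ a central extension of $N$ by $\mathfrak{h}$ cannot occur anyway) and then read both claims off formula \eqref{fff}. Two observations hold for any data $(\omega,l,r)$: first, $\mathfrak{h}=0\oplus\mathfrak{h}$ is an abelian two-sided ideal of $\widehat{\mathfrak{g}}$, hence nilpotent, so $\mathfrak{h}\subseteq\widehat{N}$; second, the projection $\pi\colon\widehat{\mathfrak{g}}\to\widehat{\mathfrak{g}}/\mathfrak{h}$ identifies the quotient with $\mathfrak{g}$, so $\pi(\widehat{N})$ is a nilpotent ideal of $\mathfrak{g}$ and therefore lies in $N$, giving $\widehat{N}\subseteq\pi^{-1}(N)=N\oplus\mathfrak{h}$. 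Thus always $\mathfrak{h}\subseteq\widehat{N}\subseteq N\oplus\mathfrak{h}$, and $N\oplus\mathfrak{h}=\pi^{-1}(N)$ is itself a two-sided ideal of $\widehat{\mathfrak{g}}$.

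For the first equivalence: if $N\subseteq\ker l\cap\ker r$, then for $x,y\in N$ and $a,b\in\mathfrak{h}$ formula \eqref{fff} collapses to $[x+a,y+b]=[x,y]_{N}+\omega^{0}(x,y)$, so $\mathfrak{h}$ is central in the ideal $N\oplus\mathfrak{h}$; being a central extension of the nilpotent algebra $N$, that ideal is nilpotent, so $N\oplus\mathfrak{h}\subseteq\widehat{N}$, whence $\widehat{N}=N\oplus\mathfrak{h}$ is a central extension of $N$ by $\mathfrak{h}$. Conversely, if $\widehat{N}$ is a central extension of $N$ by $\mathfrak{h}$ then $\dim\widehat{N}=\dim N+\dim\mathfrak{h}$, which together with $\mathfrak{h}\subseteq\widehat{N}\subseteq N\oplus\mathfrak{h}$ forces $\widehat{N}=N\oplus\mathfrak{h}$; centrality of $\mathfrak{h}$ in $\widehat{N}$ then reads $l_{x}b=[x,b]=0$ and $r_{x}b=[b,x]=0$ for all $x\in N$, $b\in\mathfrak{h}$, i.e. $N\subseteq\ker l\cap\ker r$.

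For the second equivalence, assume $N\subseteq\ker l\cap\ker r$, so by the above $\widehat{N}=N\oplus\mathfrak{h}$ with bracket $[x+a,y+b]=[x,y]_{N}+\omega^{0}(x,y)$ and $\mathfrak{h}\subseteq Z(\widehat{N})$. Subtracting the central summand $\mathfrak{h}$ shows $Z(\widehat{N})=\mathfrak{h}\oplus(Z(\widehat{N})\cap N)$, so $Z(\widehat{N})=\mathfrak{h}$ iff $Z(\widehat{N})\cap N=0$. Directly from the bracket, $x\in N$ lies in $Z(\widehat{N})$ iff $[x,N]_{N}=[N,x]_{N}=0$ and $\omega^{0}(x,N)=\omega^{0}(N,x)=0$, that is, iff $x\in Z(N)$, $x\in G_{\omega^0}$, and also $\omega^{0}(x,N)=0$. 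In particular $Z(N)\cap G_{\omega^0}=0$ at once gives $Z(\widehat{N})\cap N=0$. For the converse I would take $0\neq X\in Z(N)\cap G_{\omega^0}$ and aim to show $X\in Z(\widehat{N})$; the only ingredient not yet in hand is $\omega^{0}(X,N)=0$, and for this I would substitute $X$ into the $2$-cocycle identity \eqref{c4} for $\omega^{0}$ (with trivial action, since $N\subseteq\ker l\cap\ker r$), using $[X,N]_{N}=[N,X]_{N}=0$ to force $\omega^{0}(X,\cdot)$ to vanish.

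The hard part, I expect, is precisely that last step: reconciling the one-sided condition $\omega^{0}(N,X)=0$ that defines $G_{\omega^0}$ with the genuinely two-sided requirement on $Z(\widehat{N})$ — equivalently, showing that over $Z(N)$ the left and right radicals of $\omega^{0}$ coincide. The cocycle identity confined to $N$ only yields vanishing of $\omega^{0}(X,\cdot)$ on $N^{2}=[N,N]$, so propagating it to all of $N$ should require invoking that $\widehat{\mathfrak{g}}$ itself (not merely $\widehat{N}$) is a Leibniz algebra, i.e. the full cocycle relation on $\mathfrak{g}$ together with the solvability of $\mathfrak{g}$. The remainder is routine bookkeeping with \eqref{fff} and elementary ideal- and dimension-counting.
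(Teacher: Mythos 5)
Your first equivalence is correct and is essentially the paper's own argument: the sandwich $\mathfrak{h}\subseteq\widehat{N}\subseteq\pi^{-1}(N)=N\oplus\mathfrak{h}$ obtained by projecting the nilpotent ideal $\widehat{N}$ to a nilpotent ideal of $\mathfrak{g}$, together with the observation that when $N\subseteq\operatorname{ker}l\cap\operatorname{ker}r$ the ideal $N\oplus\mathfrak{h}$ is a central extension of the nilpotent algebra $N$ and hence nilpotent. The genuine problem is in the second equivalence, and it sits exactly where you flagged it. From \eqref{fff} you correctly find
\[
Z(\widehat{N})\cap N=\bigl\{X\in Z(N)\ :\ \omega^{0}(N,X)=0\ \text{ and }\ \omega^{0}(X,N)=0\bigr\},
\]
which gives the implication $G_{\omega^{0}}\cap Z(N)=0\Rightarrow Z(\widehat{N})=\mathfrak{h}$. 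For the converse you must show that a nonzero $X\in G_{\omega^{0}}\cap Z(N)$ really lies in $Z(\widehat{N})$, i.e.\ that the left-slot condition $\omega^{0}(X,N)=0$ comes for free; it does not, and the repair you sketch cannot produce it. The identity \eqref{c4} with trivial action and $X\in Z(N)$ yields only $\omega^{0}(X,[N,N])=0$, and there is no mechanism to propagate this to a complement of $[N,N]$ in $N$; nor does the cocycle identity on all of $\mathfrak{g}$ help in general, since nothing in the hypotheses excludes $\mathfrak{g}=N$ nilpotent. Concretely: take $N$ two-dimensional abelian, $\mathfrak{h}=\langle e_3\rangle$, $\omega^{0}(e_{1},e_{2})=e_{3}$ and all other values zero. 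This is a legitimate $2$-cocycle, $e_{1}\in G_{\omega^{0}}\cap Z(N)\neq 0$, yet $[e_{1},e_{2}]=e_{3}\neq0$ in $\widehat{N}$, so the two-sided center of $\widehat{N}$ is exactly $\mathfrak{h}$. Hence the ``only if'' you are missing is actually false for the two-sided center, not merely unproved.

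You should know that the paper does not close this gap either: its proof consists of the bare assertion $Z(\widehat{N})=(G_{\omega^{0}}\cap Z(N))\oplus\mathfrak{h}$. That identity is literally true if $Z(\widehat{N})$ is read as the \emph{right} center $\{v:[\widehat{N},v]=0\}$ --- then $X+a$ is central precisely when $[N,X]=0$ and $\omega^{0}(N,X)=0$, matching the one-sided definition of $G_{\omega^{0}}$ --- and that is presumably the intended reading; with the two-sided center your computation is the correct one and $G_{\omega^{0}}$ would have to be replaced by the two-sided radical $\{X\in N:\omega^{0}(N,X)=\omega^{0}(X,N)=0\}$. So your proposal correctly establishes the first statement and the direction of the second that is actually used, but as written it does not (and, under your reading of $Z$, cannot) establish the stated converse, and the cocycle-identity route you propose for it is a dead end.
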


\begin{proof} Consider an extension $N(\omega^0,l^0, r^0)=(N\oplus \mathfrak{h}, [\cdot,\cdot]_{l_0,r_0,\omega_0}).$ Then the extension
$$0\rightarrow \mathfrak{h} \rightarrow N(\omega^0,l^0, r^0)\rightarrow N\rightarrow 0$$
is central if and only if $l^0=0, r^0=0.$ Then we have $N \subseteq \operatorname{ker} l \cap \operatorname{ker} r$ and $N(\omega^0,l^0,r^0)=N(\omega^0)\subset \widehat{N}$.
We show that $\widehat{N} \subset N(\omega^0)$. Since $\widehat{N}$ is an ideal of $\widehat{\mathfrak{g}}$, then it is an extension of subalgebra $M \subset \mathfrak{g}$ by $\mathfrak{h}$. It follows that $M$ is a nilpotent subalgebra of $\mathfrak{g}$ containing $N$. Since $N$ is a nilradical of $\mathfrak{g},$ we have  $M \subset N$ which follows $\widehat{N} \subset N(\omega^0).$

 Now, assume that $N \subseteq \operatorname{ker} l \cap \operatorname{ker} r,$ then we have $Z(\widehat{N})=(G_{\omega^0}\cap Z(N))\oplus \mathfrak{h}.$
Thus $Z(\widehat{N})=\mathfrak{h}$ if and only if $G_{\omega^0}\cap Z(N)=0$.
\end{proof}

Given two extensions $\widehat{\mathfrak{g}}_1=\mathfrak{g}(\omega^1,l^1,r^1)$ and
$\widehat{\mathfrak{g}}_2=\mathfrak{g}(\omega^2,l^2,r^2),$ such that $Z(\widehat{N}_1)=Z(\widehat{N}_2)=\mathfrak{h},$ where $\widehat{N}_1$ and $\widehat{N}_2$ are nilradicals of $\widehat{\mathfrak{g}}_1$ and $\widehat{\mathfrak{g}}_2.$

\begin{prop}
Two Leibniz algebras $\widehat{\mathfrak{g}}_1=\mathfrak{g}(\omega^1, l^1, r^1)$ and $\widehat{\mathfrak{g}}_2=\mathfrak{g}(\omega^2,l^2, r^2)$ are isomorphic if and only if there exists $\varphi\in \operatorname{Aut} (\mathfrak{g}),$ $\psi \in \operatorname{Aut} (\mathfrak{h})$ and $f \in \operatorname{Hom}(\mathfrak{g},\mathfrak{h}),$ such that
\begin{equation}{\label{l4}}\omega^2(\varphi(x),\varphi(y))=\psi \big(\omega^1(x,y)\big)+f([x,y])-l^2_{\varphi(x)}f(y)-r^2_{\varphi(y)}f (x),\end{equation}
\begin{equation}{\label{l5}}l^2_{\varphi(x)}\psi(a)=\psi\big(l^1_x a \big),\end{equation}
\begin{equation}{\label{l6}}r^2_{\varphi(y)}\psi(a)=\psi\big(r^1_y a \big),\end{equation}
for any $x, y \in \mathfrak{g},$ $a\in \mathfrak{h}.$
\end{prop}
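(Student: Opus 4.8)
The plan is to prove the two directions separately, extracting the isomorphism data $(\varphi,\psi,f)$ in one direction and assembling a map in the other. For the forward direction, suppose $\Phi\colon \widehat{\mathfrak{g}}_1 \to \widehat{\mathfrak{g}}_2$ is a Leibniz algebra isomorphism, where both spaces have underlying vector space $\mathfrak{g}\oplus\mathfrak{h}$ with the brackets $[\cdot,\cdot]_{l^i,r^i,\omega^i}$. The key structural observation is that $\mathfrak{h}$ (identified with $0\oplus\mathfrak{h}$) is an ideal in each $\widehat{\mathfrak{g}}_i$, and in fact it is characterised intrinsically — e.g. as (a summand of) the relevant radical, or simply by the fact that, being abelian and appearing as the kernel of the projection to $\mathfrak{g}$, any isomorphism of the extensions must carry $\mathfrak{h}$ to $\mathfrak{h}$. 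Granting $\Phi(\mathfrak{h})=\mathfrak{h}$, write $\psi=\Phi|_{\mathfrak{h}}\in\operatorname{Aut}(\mathfrak{h})$; then $\Phi$ descends to an automorphism $\varphi$ of the quotient $\mathfrak{g}$, and for $x\in\mathfrak{g}$ (lifted to $x+0$) we may write $\Phi(x+0)=\varphi(x)+f(x)$ for a well-defined linear map $f\colon\mathfrak{g}\to\mathfrak{h}$. So $\Phi(x+a)=\varphi(x)+f(x)+\psi(a)$.

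Next I would substitute this form of $\Phi$ into the multiplicativity condition $\Phi([x+a,y+b]_1)=[\Phi(x+a),\Phi(y+b)]_2$ and expand both sides using \eqref{fff}. On the left one gets $\varphi([x,y]_g)+f([x,y]_g)+\psi(\omega^1(x,y))+\psi(l^1_xb)+\psi(r^1_ya)$ (using that $\mathfrak{h}$ is abelian so the $[a,b]_{\mathfrak{h}}$ term vanishes). On the right one gets $[\varphi(x),\varphi(y)]_g+\omega^2(\varphi(x),\varphi(y))+l^2_{\varphi(x)}(f(y)+\psi(b))+r^2_{\varphi(y)}(f(x)+\psi(a))$. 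Comparing the $\mathfrak{g}$-components gives nothing new ($\varphi$ is already an automorphism). Comparing $\mathfrak{h}$-components and then separating by which of $a,b$ or neither appears: the terms with $b$ alone yield $\psi(l^1_xb)=l^2_{\varphi(x)}\psi(b)$, i.e. \eqref{l5}; the terms with $a$ alone yield \eqref{l6}; and the remaining terms (free of $a,b$) give $f([x,y]_g)+\psi(\omega^1(x,y))=\omega^2(\varphi(x),\varphi(y))+l^2_{\varphi(x)}f(y)+r^2_{\varphi(y)}f(x)$, which is exactly \eqref{l4}. This establishes necessity.

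For the converse, given $\varphi\in\operatorname{Aut}(\mathfrak{g})$, $\psi\in\operatorname{Aut}(\mathfrak{h})$ and $f\in\operatorname{Hom}(\mathfrak{g},\mathfrak{h})$ satisfying \eqref{l4}–\eqref{l6}, define $\Phi\colon\widehat{\mathfrak{g}}_1\to\widehat{\mathfrak{g}}_2$ by $\Phi(x+a)=\varphi(x)+f(x)+\psi(a)$. This is a linear bijection (its inverse being built from $\varphi^{-1},\psi^{-1}$ and $-\psi^{-1}f\varphi^{-1}$). Then one verifies $\Phi$ is a homomorphism by the same expansion as above, read in reverse: conditions \eqref{l5} and \eqref{l6} handle the $l$- and $r$-terms, \eqref{l4} handles the $\omega$- and $f([x,y])$-terms, and the $\mathfrak{g}$-part is automatic since $\varphi$ is a homomorphism; the $[a,b]_{\mathfrak{h}}$ terms vanish on both sides as $\mathfrak{h}$ is abelian. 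Hence $\widehat{\mathfrak{g}}_1\cong\widehat{\mathfrak{g}}_2$.

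The one genuinely non-routine point is justifying that every isomorphism $\Phi$ of the two extensions must satisfy $\Phi(\mathfrak{h})=\mathfrak{h}$ — i.e. that $\mathfrak{h}$ is preserved; everything else is bookkeeping with \eqref{fff}. I expect this to follow from the setup in which these extensions arise (with $\mathfrak{h}$ abelian and sitting inside the nilradical as the kernel of the canonical projection, so that it is recovered canonically, for instance via $Z(\widehat N)=\mathfrak{h}$ in the situation of the preceding lemma, or because we only compare extensions along the fixed projection $\widehat{\mathfrak g}\to\mathfrak g$); I would state this preservation explicitly as the hypothesis under which "isomorphic" is being used, so that the proof reduces cleanly to the computation above.
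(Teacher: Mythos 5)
Your proof is correct and follows essentially the same route as the paper's: write the isomorphism in block lower-triangular form $\Phi(x+a)=\varphi(x)+f(x)+\psi(a)$, expand both sides of the multiplicativity condition via \eqref{fff}, and extract \eqref{l4}--\eqref{l6} by comparing $\mathfrak{h}$-components after specialising $a$ and $b$. The paper simply posits this matrix form for $\Psi$ without arguing that $\mathfrak{h}$ is preserved, so your explicit flagging of that point (and the suggestion to justify it via $Z(\widehat{N})=\mathfrak{h}$ from the preceding lemma) is, if anything, more careful than the original.
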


\begin{proof}
Let $\Psi:\widehat{\mathfrak{g}}_1\rightarrow \widehat{\mathfrak{g}}_2$ be an isomorphism.
Choose bases $\{e_1, e_2, \dots, e_n\}$ and $\{v_1, v_2, \dots, v_k\}$ for the vector spaces $\mathfrak{g}$ and $\mathfrak{h},$ respectively. Since $\mathfrak{h}$ is an ideal of $\widehat{\mathfrak{g}},$ and $Z(\widehat{N}_1)=Z(\widehat{N}_1)=\mathfrak{h}$, it follows that $\mathfrak{h}$ is invariant under the isomorphism $\Psi$. Thus, we can realize $\Psi$ as a matrix relative  to a suitable basis for $\mathfrak{g}\oplus \mathfrak{h}$ which is assumed to contain a basis for $\mathfrak{g}$ and a basis for $\mathfrak{h}$:
$$\Psi= \begin{pmatrix}
\varphi & 0\\
f & \psi \end{pmatrix},  \quad  \varphi \in \operatorname{End}\mathfrak{g}, \ \psi \in \operatorname{Aut} \mathfrak{h}, \ f\in \operatorname{Hom}(\mathfrak{g},\mathfrak{h}).$$

Since $\Psi$ preserves the Leibniz products, we have
\begin{equation}{\label{l7}} \Psi\big([x+a,y+b]_1\big)=
\varphi([x,y])+f([x,y])+\psi\big(\omega^1(x,y)+ l_x^1b+r_y^1a\big).\end{equation}

On the other hand,
\begin{equation}{\label{l8}}\big[\Psi(x+a),\Psi(y+b)\big]_2=\big[\varphi (x), \varphi (y)\big]+\omega^2\big(\varphi (x),\varphi (y)\big)+l_{\varphi(x)}^2\big(f(y)+\psi(b)\big)+r_{\varphi(y)}^2\big(f(x)+\psi(a)\big).\end{equation}

Then, we get that $\varphi \in \operatorname{Aut} (\mathfrak{g}).$ Letting $a=b=0$ and comparing \eqref{l7} and \eqref{l8}, we have
\begin{equation*}{\label{l9}}\omega^2(\varphi(x),\varphi (y))=\psi \big(\omega^1(x,y)\big)+f([x,y])-l^2_{\varphi (x)}f(y)-r^2_{\varphi (y)}f(x).\end{equation*}

Taking this equality into account and comparing equations \eqref{l7} and \eqref{l8}, we have:
$$\psi\big(l^1_x b \big)-l^2_{\varphi(x)} \psi(b)=r^2_{\varphi(y)} \psi (a)-\psi\big(r^1_y a \big).$$

Letting $a=0,$ we obtain
$$l^2_{\varphi(x)}\psi(b)=\psi\big(l^1_x b \big),$$
and $b=0,$ theregives
$$r^2_{\varphi(y)}\psi(a)=\psi\big(r^1_y a \big).$$

 Conversely, if \eqref{l4}, \eqref{l5} and \eqref{l6} hold, then it is readily verified that the Leibniz algebras $\mathfrak{g}(\omega^1,l^1,r^1)$ and $\mathfrak{g}(\omega^2,l^2,r^2)$ are isomorphic.
\end{proof}

If we put $\omega\circ \varphi (x,y) = \omega (\varphi (x), \varphi (y))$ and $f \circ \omega (x,y) =f (\omega (x, y)),$ then the equation \eqref{l4} can be rewritten as
\begin{equation}\label{l98}\omega^2\circ \varphi=\psi \circ \omega^1+df,\end{equation}
where $df(x,y) =f([x,y])-l^2_{\varphi (x)}f(y)-r^2_{\varphi (y)}f(x)$, i.e., $df \in B^2(\mathfrak{g},l^2,r^2)$.

Therefore, $\omega^2=\psi \circ \omega^1\circ\varphi^{-1}+(df)\circ\varphi^{-1}$.
Hence, we have
$$(df)\circ\varphi^{-1}(x,y)=f [\varphi^{-1}(x),\varphi^{-1}(y)]-l_{\varphi(x)}f(\varphi^{-1}(y))-r_{\varphi(y)}f(\varphi^{-1}(x))=$$
$$=f\circ\varphi^{-1}[x,y]-l_{\varphi(x)}\big(f\circ\varphi^{-1}(y)\big)-r_{\varphi(y)}\big(f\circ\varphi^{-1}(x)\big).$$
Hence $(df)\varphi^{-1}=d(f\circ\varphi^{-1})\in B^2(\mathfrak{g},l^2,r^2).$

Furthermore,  from equations \eqref{l5} and \eqref{l6}, we get
\begin{equation}\label{l99}\begin{cases}l^2_{x}=\psi\circ l^1_{\varphi^{-1}(x)}\circ\psi^{-1},\\
r^2_{x}=\psi\circ r^1_{\varphi^{-1}(x)}\circ\psi^{-1}.\end{cases}\end{equation}

Using the equations \eqref{l98} and \eqref{l99}, we obtain an action $\operatorname{Aut} \mathfrak{g}\times \operatorname{Aut} \mathfrak{h}$ on $\bigcup\limits_{l,r}Z^2(\mathfrak{g},l,r),$
under which a $2$-cocycle $\omega$ with respect to the pair $(l, r)$ is transformed into a $2$-cocycle $\omega'$ with respect to $(l', r')$ as follows:
\begin{equation*}\omega'(x,y)=\psi \big(\omega(\varphi(x),\varphi(y))\big),\end{equation*}
\begin{equation*}l'_{x}(a)=\psi\big(l_{\varphi(x)} \psi^{-1}(a) \big),\end{equation*}
\begin{equation*}r'_{x}(a)=\psi\big(r_{\varphi(x)} \psi^{-1}(a) \big).\end{equation*}

In this action we say that $\psi$ is an intertwining operator for $l'$ and $l\circ\varphi$ (respectively, $r'$ and $r\circ\varphi$).

Therefore, we conclude that two Leibniz algebras $\widehat{\mathfrak{g}}_1=\mathfrak{g}(\omega^1, l^1, r_1)$ and $\widehat{\mathfrak{g}}_2=\mathfrak{g}(\omega^2,l^2, r^2)$ are isomorphic if and only if
\begin{equation*}{\label{l10}}\omega^2 - \psi \circ \omega^1\circ\varphi \in B^2(\mathfrak{g},l^2,r^2)\end{equation*}
and $\psi$ is an intertwining operator for $l^2$ and $l^1\circ\varphi$ (respectively, $r^2$ and $r^1\circ\varphi$ ). Thus, we obtain following proposition.

\begin{prop}\label{prop2.8} Let $\widehat{\mathfrak{g}}_1=\mathfrak{g}(\omega^1, l^1, r^1)$ and  $\widehat{\mathfrak{g}}_2=\mathfrak{g}(\omega^2,l^2, r^2)$ be extensions of the solvable Leibniz algebra $\mathfrak{g}$ by the abelian algebra $\mathfrak{h}.$
Then the Leibniz algebras $\widehat{\mathfrak{g}}_1$ and $\widehat{\mathfrak{g}}_2$ are isomorphic if and only if $\omega^1$ and $\omega^2$ are in the same  $\operatorname{Aut} \mathfrak{g}\times \operatorname{Aut} \mathfrak{h}$ orbit in $\bigcup\limits_{l,r}H^2(\mathfrak{g},l,r).$

\end{prop}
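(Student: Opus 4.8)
The plan is to deduce the statement directly from the isomorphism criterion established in the previous proposition, combined with the reformulation of conditions \eqref{l4}--\eqref{l6} carried out in the paragraphs just above. By that criterion, $\widehat{\mathfrak{g}}_1\cong\widehat{\mathfrak{g}}_2$ if and only if there exist $\varphi\in\operatorname{Aut}\mathfrak{g}$, $\psi\in\operatorname{Aut}\mathfrak{h}$ and $f\in\operatorname{Hom}(\mathfrak{g},\mathfrak{h})$ satisfying \eqref{l4}, \eqref{l5}, \eqref{l6}. Hence the entire content of the proposition is the translation of ``there exists such a triple $(\varphi,\psi,f)$'' into ``$\omega^1$ and $\omega^2$ represent points of $\bigcup_{l,r}H^2(\mathfrak{g},l,r)$ lying in a single $\operatorname{Aut}\mathfrak{g}\times\operatorname{Aut}\mathfrak{h}$-orbit.''

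First I would confirm that the prescription $\omega\mapsto\psi\circ\omega\circ\varphi$, $l_x\mapsto\psi\circ l_{\varphi(x)}\circ\psi^{-1}$, $r_x\mapsto\psi\circ r_{\varphi(x)}\circ\psi^{-1}$ indeed defines an action of $\operatorname{Aut}\mathfrak{g}\times\operatorname{Aut}\mathfrak{h}$ on $\bigcup_{l,r}Z^2(\mathfrak{g},l,r)$: one checks by substitution that if $(l,r)$ satisfies \eqref{c1}, \eqref{c3} then so does its translate $(l',r')$, and that if $\omega$ satisfies the cocycle identity \eqref{c4} relative to $(l,r)$ then $\omega'$ satisfies it relative to $(l',r')$ --- here the only inputs are that $\varphi$ is an algebra automorphism and $\psi$ an invertible linear map. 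Then I would observe that this action descends to $\bigcup_{l,r}H^2(\mathfrak{g},l,r)$, which is exactly the identity $(df)\circ\varphi^{-1}=d(f\circ\varphi^{-1})$ verified above: it shows that $B^2(\mathfrak{g},l,r)$ is carried into $B^2(\mathfrak{g},l',r')$, so cohomology classes are well defined along orbits.

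It then remains to match the two descriptions, which is pure bookkeeping. If $\widehat{\mathfrak{g}}_1\cong\widehat{\mathfrak{g}}_2$, pick $(\varphi,\psi,f)$ as in the criterion; equations \eqref{l5}--\eqref{l6} in the form \eqref{l99} say that $(l^2,r^2)$ is the $(\varphi,\psi)$-translate of $(l^1,r^1)$, and \eqref{l4} in the form \eqref{l98} says $\omega^2$ differs from the corresponding translate of $\omega^1$ by the coboundary $(df)\circ\varphi^{-1}\in B^2(\mathfrak{g},l^2,r^2)$, so the two classes coincide after the group action. Conversely, if the classes lie in one orbit there are $\varphi,\psi$ with $(l^2,r^2)$ the translate of $(l^1,r^1)$ and $\omega^2-\psi\circ\omega^1\circ\varphi^{-1}=dg$ with $g\in\operatorname{Hom}(\mathfrak{g},\mathfrak{h})$; putting $f=g\circ\varphi$ turns \eqref{l98}--\eqref{l99} back into \eqref{l4}--\eqref{l6}, and the criterion yields $\widehat{\mathfrak{g}}_1\cong\widehat{\mathfrak{g}}_2$. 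I do not expect a real obstacle here: the only things to watch are the consistent use of $\varphi$ versus $\varphi^{-1}$ when passing between the coboundary formula used in the criterion and the action defined above, and the verification that the action stays inside $\bigcup_{l,r}Z^2(\mathfrak{g},l,r)$ --- both are the routine substitutions already sketched before the statement, so the proposition is essentially a repackaging of the isomorphism criterion into orbit language.
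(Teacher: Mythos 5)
Your proposal is correct and follows essentially the same route as the paper: the paper also derives this proposition directly from the preceding isomorphism criterion by rewriting \eqref{l4}--\eqref{l6} as \eqref{l98}--\eqref{l99}, checking that $(df)\circ\varphi^{-1}=d(f\circ\varphi^{-1})\in B^2(\mathfrak{g},l^2,r^2)$ so the action descends to cohomology, and then reading off the orbit formulation. Your extra remark that one should verify the action preserves the representation conditions and the cocycle identity is a point the paper leaves implicit, but it does not change the argument.
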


\section{Extension of solvable Leibniz algebra with null-filiform nilradical}

In this section, we obtain one-dimensional abelian extensions of the solvable Leibniz algebra
 $$R=\begin{cases}
[e_i,e_1]=e_{i+1}, & 1\leq i\leq n-1,\\
[e_i,x]=-ie_i, & 1\leq i \leq n,\\
[x,e_1]=e_1,\end{cases}$$
where $\{x, e_1, e_2, \dots, e_n\}$ is a basis of $R.$

First we give the description of the group of automorphisms of the algebra $R.$

\begin{prop} Any automorphism of the algebra $R$ has the following form:

$$\varphi(x)=x+\sum\limits_{i=1}^n\frac{b^i}{n!}e_i, \quad
\varphi(e_j)=\sum\limits_{i=j}^n\frac{a^jb^{i-j}}{(i-j)!}e_i, \quad  1\leq j \leq n.$$

\end{prop}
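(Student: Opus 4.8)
The plan is to exploit that the nilradical of $R$ and the terms of its lower central series are \emph{characteristic} ideals; this forces every $\varphi\in\operatorname{Aut}(R)$ to be ``triangular'' with respect to the basis $\{x,e_1,\dots,e_n\}$, and then the precise entries are pinned down by the defining relations.

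First I would observe that $N:=\langle e_1,\dots,e_n\rangle\cong NF_n$ is the nilradical of $R$: it is a nilpotent ideal, it is maximal such because $R/N$ is one–dimensional, and $R$ itself is not nilpotent since $\operatorname{ad}_x$ acts on $e_i$ by $-i$. Hence $N$ is characteristic, so $\varphi(N)=N$; as $\varphi$ is invertible this gives $\varphi(x)=\gamma x+u$ with $\gamma\in\KK^{*}$, $u\in N$. The lower central series of $N$ is $N^k=\langle e_k,\dots,e_n\rangle$ ($1\le k\le n$), and each $N^k$ is again characteristic, because $\varphi(N^{k+1})=[\varphi(N^k),\varphi(N)]=[N^k,N]=N^{k+1}$. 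Since $\dim N^k/N^{k+1}=1$, this forces $\varphi(e_j)=\sum_{i=j}^{n}q_{ji}e_i$ with $q_{jj}\ne 0$, and $\varphi(x)=\gamma x+\sum_{k=1}^{n}p_k e_k$ with $\gamma\ne 0$.

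Next I would feed the relations through $\varphi$. Applying it to $[e_i,e_1]=e_{i+1}$, and using that the only nonvanishing products among $e_1,\dots,e_n$ are $[e_i,e_1]=e_{i+1}$, yields the recursion $q_{i+1,\,u}=q_{11}\,q_{i,\,u-1}$; hence all $q_{ji}$ are determined by $a:=q_{11}$ together with the single row $q_{1,1},\dots,q_{1,n}$, and in particular $q_{jj}=a^{\,j}$. Applying $\varphi$ to $[e_i,x]=-ie_i$ and comparing the $e_i$–coefficient gives $-\gamma\, i\, q_{ii}=-i\, q_{ii}$, whence $\gamma=1$ (as $q_{ii}=a^i\ne0$, $i\ge 1$); comparing the $e_s$–coefficients for $s>i$ gives $(s-i)\,q_{is}=p_1\,q_{i,\,s-1}$. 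Setting $b:=p_1$ and iterating, this produces $q_{is}=\dfrac{a^{\,i}b^{\,s-i}}{(s-i)!}$, which is exactly the stated formula for $\varphi(e_j)$. It then remains to determine the $p_k$: applying $\varphi$ to $[x,e_1]=e_1$ and substituting the formula for $\varphi(e_1)$ gives $a\,p_{k-1}=q_{1,k}$ for $2\le k\le n$, hence $p_1,\dots,p_{n-1}$ in terms of $b$; and applying $\varphi$ to $[x,x]=0$ gives $k\,p_k=b\,p_{k-1}$ for all $2\le k\le n$, which together with $p_1=b$ determines every $p_i$ and finishes the forward direction. Conversely, for any $a\in\KK^{*}$, $b\in\KK$ one checks directly that the map given by these formulas preserves $[e_i,e_1]=e_{i+1}$, $[e_i,x]=-ie_i$, $[x,e_1]=e_1$ (and the vanishing products), so these maps are exactly $\operatorname{Aut}(R)$.

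I do not expect a genuine obstacle: once the triangular shape is in place, the argument is a finite bookkeeping of a few recursions. The one subtlety worth flagging is that the relation $[x,e_1]=e_1$ alone leaves the top coefficient $p_n$ of $\varphi(x)$ undetermined, so one must also invoke $[x,x]=0$ to pin it down.
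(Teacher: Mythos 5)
Your proof is correct, and since the paper dismisses this proposition with ``follows directly from the definition of an automorphism,'' your write-up simply supplies the omitted computation: the characteristic-ideal argument ($N=\langle e_1,\dots,e_n\rangle$ is the nilradical and the $N^k$ are preserved) gives the triangular shape, and the three families of relations then yield exactly the recursions you list. All of the recursions check out, including the point you rightly flag, that $[x,x]=0$ is needed to pin down the top coefficient of $\varphi(x)$.

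One discrepancy you should have made explicit: your recursions $a\,p_{k-1}=q_{1,k}$ and $k\,p_k=b\,p_{k-1}$ with $p_1=b$ give $p_i=\dfrac{b^{\,i}}{i!}$, whereas the proposition as printed asserts $\varphi(x)=x+\sum_{i=1}^{n}\dfrac{b^{\,i}}{n!}\,e_i$. The printed denominator $n!$ is incompatible with the recursion $k\,p_k=b\,p_{k-1}$ for $2\le k<n$ (it would force $k\,b^{\,k}=b^{\,k}$), so it is evidently a typo for $i!$; your computation is the correct one, and it agrees with the formula for $\varphi(e_j)$ via $q_{1,k}=a\,p_{k-1}$. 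Saying only that the recursions ``determine every $p_i$'' without recording the closed form leaves the reader to discover this mismatch on their own.
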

\begin{proof} The proof follows directly from the definition of the automorphism.
\end{proof}

Now we present a description of $Z^2(R,l,r),$ i.e., the $2$-cocycle on $R$ with respect to the pair $(l,r)$ by the one dimensional abelian algebra $\mathfrak{h}=\langle e_{n+1} \rangle$.
Since the nilradical of $R$ is $NF_n = \langle e_1, e_2, \ldots, e_n\rangle,$ then from the condition  $NF_n \subseteq \operatorname{ker} l \cap \operatorname{ker} r,$ we obtain that  $$l_x(e_{n+1})=\gamma_1 e_{n+1}, \quad r_x(e_{n+1})=\gamma_2e_{n+1}.$$

Since $l,r$ is a representation, then from \eqref{c3}, we get that $\gamma_1(\gamma_1+\gamma_2)=0.$
Denote
$$d^2_{\omega, l, r}(x,y,z) = \omega([x,y],z)-\omega(x,[y,z])-\omega([x,z],y)-l_x\omega(y,z)-r_y\omega(x,z)+r_z\omega(x,y).$$

\begin{prop}{\label{R}}
A basis of $Z^2(R,l,r)$ is formed by the following cocycles:
\begin{itemize}
\item[1.] If $\gamma_1=0,$ $\gamma_2=-n-1,$ then
\begin{equation}\begin{array}{ll} \label{eq3.1}\omega(e_i,e_1)=b_{i,1}, & 1\leq i\leq n,\\
\omega(x,e_1)=b_{n+1,1},& \omega(e_1,x)=nb_{n+1,1},\\
\omega(e_i,x)=(n+1-i)b_{i-1,1}, & 2\leq i \leq n,\\
\omega(x,x)=b_{n+1,n+1}.
\end{array}\end{equation}

\item[2.] If $\gamma_1=0,$ $\gamma_2\neq -n-1,$  then
\begin{equation}\begin{array}{ll} \label{eq3.2}\omega(e_i,e_1)=b_{i,1}, &1\leq i\leq n-1,\\
\omega(x,e_1)=b_{n+1,1},&  \omega(e_1,x)=-(\gamma_2+1)b_{n+1,1},\\
\omega(e_i,x)=-(i+\gamma_2)b_{i-1.1}, & 2\leq i \leq n,\\
\omega(x,x)=b_{n+1,n+1}.
\end{array}\end{equation}

\item[3.] If  $\gamma_1\neq 0,$ then $\gamma_1=-\gamma_2$ and
\begin{equation}\begin{array}{ll} \label{eq3.3}\omega(e_i,e_1)=b_{i,1}, &1\leq i\leq n-1,\\
 \omega(x,e_1)=b_{n+1,1}, & \omega(e_1,x)=-b_{n+1,1},\\
 \omega(e_i,x)=(\gamma_1-i)b_{i-1,1}, & 2\leq i\leq n,\\
 \omega(x,e_i)=-\gamma_1b_{i-1,1}, & 2\leq i\leq n.
\end{array}\end{equation}
\end{itemize}
\end{prop}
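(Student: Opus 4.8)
The plan is to solve the cocycle condition $d^2_{\omega,l,r}(x,y,z)=0$ directly on a basis, using the very rigid structure of $R$ — namely that $NF_n$ is generated as an algebra by $e_1$ together with the action of $x$, and that $[e_i,e_1]=e_{i+1}$, $[e_i,x]=-ie_i$, $[x,e_1]=e_1$ are the only nonzero products. First I would record the constraint coming from the representation: since $NF_n\subseteq\ker l\cap\ker r$, the maps $l,r$ are determined by the two scalars $\gamma_1,\gamma_2$ via $l_x(e_{n+1})=\gamma_1 e_{n+1}$, $r_x(e_{n+1})=\gamma_2 e_{n+1}$, and \eqref{c3} forces $\gamma_1(\gamma_1+\gamma_2)=0$, so either $\gamma_1=0$ (cases 1,2) or $\gamma_1=-\gamma_2\ne0$ (case 3). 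This splits the argument into the three stated branches.

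Next, within each branch, I would extract the values $\omega(e_i,e_1)$ as free parameters $b_{i,1}$ and then show that every other value $\omega(e_i,e_j)$, $\omega(e_i,x)$, $\omega(x,e_i)$, $\omega(x,x)$ is forced by them (together with $b_{n+1,n+1}:=\omega(x,x)$ and $b_{n+1,1}:=\omega(x,e_1)$). The mechanism is: apply $d^2_{\omega,l,r}=0$ to well-chosen triples. For instance, $d^2(e_i,e_1,e_1)=0$ relates $\omega(e_{i+1},e_1)$, $\omega(e_i,e_2)$, and $\omega(e_i,e_1)$; iterating, all $\omega(e_i,e_j)$ with $j\ge2$ are expressed through the $b_{k,1}$ and in fact vanish beyond what the list records. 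Triples of the form $d^2(e_i,e_1,x)=0$, $d^2(e_i,x,e_1)=0$, $d^2(x,e_1,e_1)=0$ then pin down $\omega(e_i,x)$, $\omega(x,e_i)$ and $\omega(e_1,x)$ in terms of the $b_{i-1,1}$ with the coefficients $(n+1-i)$, $-(i+\gamma_2)$, $(\gamma_1-i)$, $-\gamma_1$ appearing exactly as in \eqref{eq3.1}--\eqref{eq3.3}; the special role of the value $\gamma_2=-n-1$ shows up because the recursion for $\omega(e_n,e_1)$ degenerates there (this is why case 1 keeps $\omega(e_n,e_1)$ as a free parameter $b_{n,1}$ while cases 2 and 3 do not). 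Finally $d^2(x,x,e_1)=0$ and $d^2(e_1,x,x)=0$ leave $\omega(x,x)$ genuinely free in cases 1,2 but force $\omega(x,x)=0$ in case 3, matching the absence of a $b_{n+1,n+1}$ term in \eqref{eq3.3}.

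To finish I would check that the bilinear maps listed actually are cocycles — i.e. that the parametrized family genuinely lies in $Z^2(R,l,r)$ — by substituting back into $d^2_{\omega,l,r}$ on an arbitrary triple of basis elements; by bilinearity and the fact that $R$ is spanned by $\{x,e_1,\dots,e_n\}$ it suffices to check finitely many triple types, and the identities reduce to the scalar relations already used. The count of free parameters ($b_{1,1},\dots,b_{n,1},b_{n+1,1},b_{n+1,n+1}$ in cases 1,2 and $b_{1,1},\dots,b_{n-1,1},b_{n+1,1}$ in case 3) then gives the dimension and confirms these cocycles form a basis.

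The main obstacle I anticipate is purely bookkeeping: organizing the cascade of relations from the triples $(e_i,e_1,e_1)$, $(e_i,e_1,x)$, $(e_i,x,e_1)$, $(x,e_1,e_1)$, $(x,x,e_1)$ so that each new $\omega$-value is solved in terms of previously determined ones without circularity, and correctly tracking where the denominator $(i+\gamma_2)$ (resp. the case $\gamma_2=-n-1$, resp. $\gamma_1\neq0$) changes which parameters survive. There is no deep conceptual difficulty — the Leibniz identity plus the null-filiform structure makes the system triangular — but the case analysis around the degenerate value of $\gamma_2$ is the point where care is needed.
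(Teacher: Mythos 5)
Your proposal is correct and follows essentially the same route as the paper: split on $\gamma_1(\gamma_1+\gamma_2)=0$, evaluate $d^2_{\omega,l,r}=0$ on basis triples to express everything through the parameters $b_{i,1}$, $b_{n+1,1}$, $b_{n+1,n+1}$, and track the degeneration at $\gamma_2=-n-1$. The only cosmetic slip is that in case 3 it is the triple $(x,x,x)$ (giving $\gamma_1 b_{n+1,n+1}=0$) rather than $(x,x,e_1)$ that forces $\omega(x,x)=0$.
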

\begin{proof}
Put
 $$\begin{array}{lll}
 \omega(x,x)=b_{n+1,n+1}e_{n+1}, &\omega(e_i,e_j)=b_{i,j}e_{n+1},&1\leq i,j\leq n,\\
 \omega(x,e_i)=b_{n+1,i}e_{n+1}, & \omega(e_i,x)=b_{i,n+1}e_{n+1}, & 1\leq i\leq n.
 \end{array}$$

By utilizing the equation  $\gamma_1(\gamma_1+\gamma_2)=0,$ we examine the following two cases with respect to $\gamma_1:$

\begin{itemize}
\item Let $\gamma_1=0.$ Considering the equation $d^2_{\omega, l, r}(x,y,z) = 0$ for all possible basis elements $\{x, e_1, e_2, \dots, e_n\},$ we obtain
 $$\begin{array}{lllllll}
 d^2_{\omega, l, r}(x,e_1,e_1)=0 & \Rightarrow  & b_{n+1,2}=0,\\
 d^2_{\omega, l, r}(x,e_1,e_i)=0 & \Rightarrow & b_{1,i}=0, & 2\leq i \leq n,\\
 d^2_{\omega, l, r}(x,e_i,e_1)=0 & \Rightarrow& b_{1,i}=-b_{n+1,i+1} & 2\leq i \leq n-1,\\
 d^2_{\omega, l, r}(x,e_n,e_1)=0 & \Rightarrow & b_{1,n}=0,\\
 d^2_{\omega, l, r}(e_i,x, e_1)=0 &\Rightarrow& b_{i+1,n+1}=-(\gamma_2+i+1)b_{i,1},& 1\leq i \leq n-1,\\
 d^2_{\omega, l, r}(e_n, x, e_1)=0 & \Rightarrow &(\gamma_2+n+1)b_{n,1}=0,\\
 d^2_{\omega, l, r}(e_i, x, e_j)=0 &\Rightarrow & (\gamma_2+i)b_{i,j}=0, & 1\leq i \leq n,& 2\leq j \leq n, \\
d^2_{\omega, l, r}(e_i,e_j,x)=0 & \Rightarrow & (\gamma_2+i+j)b_{i,j}=0 &  1\leq i \leq n, & 1\leq j \leq n, \\
d^2_{\omega, l, r}(x,e_1,x)=0& \Rightarrow &b_{1,n+1}=-(\gamma_2+1)b_{n+1,1}, \\
d^2_{\omega, l, r}(x,e_i,x)=0& \Rightarrow &(\gamma_2+i)b_{n+1,i}=0,&2\leq i\leq n, \\
d^2_{\omega, l, r}(e_i,e_1,e_1)=0&\Rightarrow &b_{i,2}=0,& 1\leq i\leq n,\\
d^2_{\omega, l, r}(e_i,e_j,e_1)=0& \Rightarrow &b_{i,j+1}=-b_{i+1,j},& 1\leq i\leq n, & 2\leq j\leq n,\\
d^2_{\omega, l, r}(e_i,e_1,e_j)=0&\Rightarrow &b_{i+1,j}=0,& 1\leq i\leq n, & 2\leq j\leq n.
\end{array}$$

Thus, we get
$$\begin{array}{llll}
b_{1,n+1}=-(\gamma_2+1)b_{n+1,1},&(1+n+\gamma_2)b_{n,1}=0,\\
b_{i,n+1}=-(i+\gamma_2)b_{i-1,1},& 2\leq i \leq n,\\
b_{i,j}=0,& 1\leq i\leq n, \quad 2\leq j\leq n,\\
b_{n+1,i}=0,& 2\leq i\leq n.\\
\end{array}$$
Since $(1+n+\gamma_2)b_{n,1}=0,$ we have that
\begin{itemize}
\item if $\gamma_2=-n-1,$ then $2$-cocycles on $R$ with respect  to the pair $(l,r)$ have  the form \eqref{eq3.1}.
\item if $\gamma_2\neq -n-1,$ then $b_{n,1}=0$ and $2$-cocycles on $R$ with respect to the pair $(l,r)$ have the form \eqref{eq3.2}.

 \end{itemize}

\item Let $\gamma_1\neq 0,$ then $\gamma_1=-\gamma_2$ and from the equation $d^2_{\omega, l, r}(x,y,z) = 0$, we have
$$\begin{array}{lllll}
d^2_{\omega, l, r}(x,e_1,e_1)=0& \Rightarrow &b_{n+1,2}=-\gamma_1 b_{1,1},\\
d^2_{\omega, l, r}(x,e_1,e_i)=0&  \Rightarrow &(1-\gamma_1)b_{1,i}=0,& 2\leq i \leq n,\\
d^2_{\omega, l, r}(x,e_i,e_1)=0& \Rightarrow &b_{n+1,i+1}=-(\gamma_1 b_{i,1}+b_{1,i}),& 2\leq i \leq n-1, \\
d^2_{\omega, l, r}(x,e_n,e_1)=0& \Rightarrow &b_{1,n}=-\gamma_1 b_{n,1},\\
d^2_{\omega, l, r}(x,e_i,e_j)=0& \Rightarrow &\gamma_1 b_{i,j}=0,& 2\leq i \leq n, & 1\leq j \leq n,\\
d^2_{\omega, l, r}(e_i,x,e_1)=0& \Rightarrow &b_{i+1,n+1}=-(1+i-\gamma_1)b_{i,1},& 1\leq i \leq n-1,\\
d^2_{\omega, l, r}(e_n,x,e_1)=0& \Rightarrow &(n+1-\gamma_1)b_{n,1}=0,\\
d^2_{\omega, l, r}(e_1,e_i,x)=0&\Rightarrow &(i+1-\gamma_1)b_{1,i}=0,& 2\leq i\leq n,\\
d^2_{\omega, l, r}(x,x,e_1)=0& \Rightarrow & b_{1,n+1}=-b_{n+1,1},\\
d^2_{\omega, l, r}(x,e_i,x)=0& \Rightarrow & \gamma_1 b_{i,n+1}=(i-\gamma_1)b_{n+1,i}, & 2\leq i\leq n,\\
d^2_{\omega, l, r}(x,x,x)=0& \Rightarrow & \gamma_1 b_{n+1,n+1}=0,\\
d^2_{\omega, l, r}(e_1,e_1,e_i)=0&\Rightarrow & b_{2,i}=0,& 2\leq i\leq n,\\
d^2_{\omega, l, r}(e_i,e_1,e_j)=0&\Rightarrow & b_{i+1,j}=0,& 1\leq i\leq n, & 2\leq j\leq n,\\
d^2_{\omega, l, r}(e_i,e_j,e_1)=0&\Rightarrow &b_{i,j+1}=-b_{i+1,j},& 1\leq i\leq n, &2\leq j\leq n.
\end{array}$$

Therefore, we get
$$\begin{array}{lll}
b_{1,n+1}=-b_{n+1,1}, & b_{n+1,n+1}=0, \\
b_{i,n+1}=(\gamma_1-i)b_{i-1,1}, &2\leq i\leq n,\\
b_{n+1,i}=-\gamma_1b_{i-1,1},& 2\leq i\leq n,\\
b_{i,j}=0,& 1\leq i\leq n, & 2\leq j\leq n.
\end{array}$$

Hence, in this case $2$-cocycles on $R$ with respect  to the pair $(l,r)$ have the form \eqref{eq3.3}.
\end{itemize}
\end{proof}

Now, we determine 2-coboundaries of $R$ with respect to the pair $(l,r).$ Put
$$f(e_i)=c_{i}e_{n+1},\quad \ 1\leq i \leq n, \quad f(x)=c_{n+1}e_{n+1}.$$

For any automorphism $\varphi \in \operatorname{Aut}(R)$ considering
$$df(x,y)=f\big([x,y]\big)-l_{\varphi(x)}f(y)-r_{\varphi(y)}f(x),$$ we have

$$\begin{array}{llll}
df(e_i,e_1)= c_{i+1},  &1\leq i\leq n-1,\\
df(e_i,x)=-(i+\gamma_2)c_i,  &1\leq i\leq n,\\
df(x,e_1)=(1-\gamma_1)c_1,& df(x,x)=-(\gamma_1+\gamma_2)c_{n+1},\\
df(x,e_i)=-\gamma_1c_i, &1\leq i\leq n. \end{array}$$

Thus, we obtain the following corollary.

\begin{cor}\label{cor3.3} We have the following:
\begin{itemize}
\item if $\gamma_1=0,$ $\gamma_2=-n-1$, then  $\operatorname{dim} Z^2(R,l,r) =n+2$ and
$\operatorname{dim} B^2(R,l,r) =n+1;$
\item if $\gamma_1=0,$ $\gamma_2\neq -n-1$, then $\operatorname{dim} Z^2(R,l,r) =n+1$ and
$\operatorname{dim} B^2(R,l,r) =n+1;$
\item if $\gamma_1\neq 0,$ then $\gamma_2=-\gamma_1$ and $\operatorname{dim} Z^2(R,l,r) =n,$
$\operatorname{dim} B^2(R,l,r) =n.$
 \end{itemize}
\end{cor}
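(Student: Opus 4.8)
The plan is to read off $\dim Z^2(R,l,r)$ directly from the basis produced in Proposition~\ref{R} and to obtain $\dim B^2(R,l,r)$ from the rank of the coboundary operator, using the explicit formulas for $df$ listed just above the statement. Write $d\colon\operatorname{Hom}(R,\mathfrak h)\to Z^2(R,l,r)$ for the linear map $f\mapsto df$. Since $\operatorname{Hom}(R,\mathfrak h)$ has dimension $n+1$, with coordinates $c_1,\dots,c_{n+1}$ where $f(e_i)=c_ie_{n+1}$ and $f(x)=c_{n+1}e_{n+1}$, it suffices to determine $\dim\ker d$, because $\dim B^2(R,l,r)=(n+1)-\dim\ker d$.

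For $\dim Z^2(R,l,r)$ I would simply count the free parameters in Proposition~\ref{R}: in each case a cocycle is pinned down by finitely many of the scalars $b_{i,j}$, the rest being forced. From \eqref{eq3.1} the independent ones are $b_{1,1},\dots,b_{n,1},b_{n+1,1},b_{n+1,n+1}$, so $\dim Z^2(R,l,r)=n+2$; in \eqref{eq3.2} the scalar $b_{n,1}$ is additionally forced to vanish, so $\dim Z^2(R,l,r)=n+1$; in \eqref{eq3.3} one also has $\omega(x,x)=0$, leaving $b_{1,1},\dots,b_{n-1,1},b_{n+1,1}$ and $\dim Z^2(R,l,r)=n$.

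For $\dim\ker d$ I would impose $df=0$ on all basis pairs, turning the displayed identities into a homogeneous linear system. The relations $df(e_i,e_1)=c_{i+1}e_{n+1}=0$ for $1\le i\le n-1$ kill $c_2,\dots,c_n$; the relations $df(x,e_i)=-\gamma_1c_ie_{n+1}=0$ are then automatic; the relation $df(x,e_1)=(1-\gamma_1)c_1e_{n+1}=0$ (together with $df(e_1,x)=-(1+\gamma_2)c_1e_{n+1}=0$) controls $c_1$; and $df(x,x)=-(\gamma_1+\gamma_2)c_{n+1}e_{n+1}=0$ controls $c_{n+1}$. So, away from degenerate scalar values, $\ker d=0$ in the first case (there $1-\gamma_1=1$ and $\gamma_1+\gamma_2=-n-1\neq0$), giving $\dim B^2(R,l,r)=n+1$; likewise $\dim B^2(R,l,r)=n+1$ in the second case; and in the third case $\gamma_1+\gamma_2=0$ frees $c_{n+1}$ while $\gamma_1\neq0$ and $1-\gamma_1\neq0$ still force $c_1=\dots=c_n=0$, so $\ker d$ is one-dimensional and $\dim B^2(R,l,r)=n$. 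This yields the stated dimensions.

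I do not anticipate a conceptual difficulty: once Proposition~\ref{R} and the coboundary formulas are in hand, the computation of both dimensions is elementary linear algebra over $\CC$. The one point that demands care is monitoring the scalar multipliers $1-\gamma_1$, $1+\gamma_2$ and $\gamma_1+\gamma_2$ attached to $c_1$ and $c_{n+1}$, since a vanishing multiplier is exactly what can enlarge $\ker d$ (this is how $c_{n+1}$ survives in the third case); the three cases of Proposition~\ref{R} therefore have to be checked one at a time. Note also that one need not verify separately that $\operatorname{im} d\subseteq Z^2(R,l,r)$, as every $2$-coboundary is automatically a $2$-cocycle.
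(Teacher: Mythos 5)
Your strategy coincides with the paper's implicit one (the paper prints the $df$ formulas and immediately announces the corollary with no further argument): read $\dim Z^2(R,l,r)$ off the free parameters in the cocycle bases and obtain $\dim B^2(R,l,r)=(n+1)-\dim\ker d$ by rank--nullity. The three counts of $\dim Z^2(R,l,r)$ are correct, as is the remark that $\operatorname{im}d\subseteq Z^2(R,l,r)$ needs no separate verification.

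The kernel computation, however, contains an unverified step, and it sits exactly at the ``degenerate scalar values'' you yourself flagged. In the second item the only relation constraining $c_{n+1}$ is $df(x,x)=-(\gamma_1+\gamma_2)c_{n+1}=-\gamma_2c_{n+1}$, so your ``likewise'' tacitly assumes $\gamma_2\neq0$; the value $\gamma_2=0$ is admissible (the representation condition only forces $\gamma_1(\gamma_1+\gamma_2)=0$), and there $c_{n+1}$ survives in $\ker d$, giving $\dim B^2(R,l,r)=n$ rather than $n+1$. In the third item you write ``$\gamma_1\neq0$ and $1-\gamma_1\neq0$'', but the second inequality is an assumption, not a consequence of the case hypothesis: at $\gamma_1=1$, $\gamma_2=-1$ the two relations that control $c_1$, namely $df(x,e_1)=(1-\gamma_1)c_1$ and $df(e_1,x)=-(1+\gamma_2)c_1=-(1-\gamma_1)c_1$, both vanish identically (note that the displayed line $df(x,e_i)=-\gamma_1c_i$ is only correct for $2\le i\le n$; a direct computation gives $df(x,e_1)=(1-\gamma_1)c_1$), so $c_1$ also lies in $\ker d$ and $\dim B^2(R,l,r)=n-1$. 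To be fair, this is a defect of the corollary as stated, inherited from the paper: the two exceptional pairs $(\gamma_1,\gamma_2)=(0,0)$ and $(1,-1)$ produce extensions whose nilradical splits off $\mathfrak{h}$ and which are implicitly discarded by the nilradical lemma of Section~2, but a proof of the literal statement must either treat these parameter values separately or record them as exceptions; as written, your argument (like the paper's) fails there.
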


We now state the main theorem of this section concerning the one-dimensional abelian extension of the solvable Leibniz algebra $R$ with a null-filiform nilradical.

\begin{thm}\label{R} Let $\widehat{R}$ be a one-dimensional abelian extension of the solvable Leibniz algebra $R,$ then $\widehat{R}$ is isomorphic to the following algebra:
 \begin{equation}\label{R.3.4}\begin{cases}
[e_i,e_1]=e_{i+1}, & 1\leq i\leq n,\\
[x,e_1]=e_1,\\
[e_i,x]=-ie_i, & 1\leq i \leq n+1.
\end{cases} \end{equation}
\end{thm}

\begin{proof}
By Corollary \ref{cor3.3}, we have $H^2(R,l,r) =0$ in the cases $\gamma_1\neq 0$ and $\gamma_1=0,$ $\gamma_2\neq -n-1$.
Therefore, we restrict ourselves to the case of $\gamma_1=0,$ $\gamma_2=-n-1.$ In this case, $\operatorname{dim} H^2(R,l,r) =1$ and an element $\overline{\omega},$ such that $\omega(e_n, e_1) = e_{n+1}$ form a basis of $H^2(R,l,r).$

Note that any automorphism $\psi \in \operatorname{Aut}(\mathfrak{h})$ is given by $\psi(e_{n+1}) = \lambda e_{n+1}.$ Considering the action $\operatorname{Aut} R \times \operatorname{Aut} \mathfrak{h}$ on $\bigcup\limits_{l,r}H^2(R,l,r).$
It is straightforward to verify that any element $\delta\overline{\omega} \in H^2(R,l,r)$ acts on $\overline{\omega'} \in H^2(R,l,r)$, such that
$\omega'(e_n, e_1) = \delta \lambda a^{n+1}e_{n+1},$ with $l'=l,$ $r'=r.$

Since $\lambda\neq 0$, it follows that the orbit of the element $\delta \overline{\omega}$ is the one-dimensional vector space $\langle \overline{\omega}\} \rangle.$ We conclude that any one-dimensional abelian extension of the algebra $R$ defined by the $2$-cocycle $\omega$ with respect to the pair $(l,r)$ is determined up to isomorphism by this orbit, where
$$\omega(e_n,e_1) =  e_{n+1}, \quad l_x(e_{n+1}) = 0, \quad r_x(e_{n+1}) = -(n+1) e_{n+1}.$$

We now define the multiplication in the algebra $\widehat{R}=R\oplus\{e_{n+1}\}$ by
    $$[e_n,e_1]=\omega(e_n,e_1) =  e_{n+1}, \quad [e_{n+1},x]=r_x(e_{n+1})=-(n+1)e_{n+1} .$$

Therefore, combining this with the multiplication in $R$, we obtain the multiplication \eqref{R.3.4}.
\end{proof}

\section{Extension of $5$-dimensional solvable Leibniz algebra with $3$-dimensional nilradical}

In this section, we classify the one-dimensional abelian extensions of five-dimensional solvable Leibniz algebras whose nilradical is three-dimensional. It is known that there exist exactly four five-dimensional solvable Leibniz algebras with a non-trivial three-dimensional nilradical
\cite{Canete}.
\begin{longtable}{ll}
\hline
$H:$ & $\begin{array}{lllll} [e_1,e_2]=e_3,& [e_2,e_1]=-e_3, &[e_1,x_1]=e_1, & [x_1,e_1]=-e_1, & [e_3,x_1]=e_3,\\[1mm]
 [x_1,e_3]=-e_3, & [e_2,x_2]=e_2,& [x_2,e_2]-e_2, & [e_3,x_2]=e_3, & [x_2, e_3]=-e_3,\end{array}$
 \\ \hline
 $L_1:$ & $\begin{array}{llllll} [e_2,e_1]=e_3,& [e_1,x_1]=e_1, & [x_1,e_1]=-e_1, & [e_3,x_1]=e_3,&  [e_2,x_2]=e_2, & [e_3,x_2]=e_3, \end{array}$
 \\ \hline
  $L_2:$ & $\begin{array}{lllll} [e_1,e_1]=e_3,& [e_1,x_1]=e_1, & [x_1,e_1]=-e_1, & [e_3,x_1]=2e_3,&  [e_2,x_2]=e_2, \end{array}$ \\ \hline
  $L_3:$ & $\begin{array}{llllll} [e_1,e_1]=e_3,& [e_1,x_1]=e_1, & [x_1,e_1]=-e_1, &[e_3,x_1]=2e_3,&  [e_2,x_2]=e_2, & [x_2,e_2]=-e_2. \end{array}$
 \\ \hline
\end{longtable}

\newpage
 The following proposition provides a description of the automorphism groups of these algebras.
 \begin{prop}\label{Aut3-dim}
 Any automorphism of the algebras $H,$ $L_1,$ $L_2$ and $L_3$ has the following form:
$$\begin{array}{ll}\operatorname{Aut}(H):&\begin{array}{l}\left\{\begin{array}{llll}\varphi_1(e_1)=a_{1}e_1+a_{2}e_3, & \varphi_1(x_1)=x_1+\frac{a_{4}}{a_{3}}e_1+\bigl(\frac{a_{2}a_{4}}{a_{1}a_{3}}+a_{5}\bigr)e_3,\\ [1mm] \varphi_1(e_2)=a_{3}e_2+a_{4}e_3,& \varphi_1(x_2)=x_2-\frac{a_{2}}{a_{1}}e_2+a_{5}e_3,\\ [1mm]
\varphi_1(e_3)=a_{1}a_{3}e_3,\end{array}\right.\\[6mm]
\left\{\begin{array}{llll}\varphi_2(e_1)=a_{1}e_2+a_{2}e_3, & \varphi_2(x_1)=x_2+\frac{a_{4}}{a_{3}}e_2+\bigl(\frac{a_{2}a_{4}}{a_{1}a_{3}}+a_{5}\bigr)e_3,\\[1mm]
\varphi_2(e_2)=-a_{3}e_1+a_{4}e_3,&  \varphi_2(x_2)=x_1+\frac{a_{2}}{a_{1}}e_1+a_{5}e_3,\\[1mm]
\varphi_2(e_3)=a_{1}a_{3}e_3,\\\end{array}\right.\\[7mm]\end{array}\\
\operatorname{Aut}(L_1):&\begin{array}{lllll}\varphi(e_1)=a_{1}e_1, & \varphi(e_2)=a_{2}e_2,& \varphi(e_3)=a_{1}a_{2}e_3,&
\varphi(x_1)=x_1, & \varphi(x_2)=x_2,\end{array}\\[2mm]
\operatorname{Aut}(L_2):& \left\{\begin{array}{lllll}\varphi(e_1)=a_{1}e_1-a_{1}a_{3}e_3, & \varphi(x_1)=x_1+a_{3}e_1-\frac{1}{2}a_{3}^2e_3 ,\\[1mm]
\varphi(e_2)=a_{2}e_2, & \varphi(x_2)=x_2, \\ [1mm]
\varphi(e_3)=a_{1}^2e_3,\end{array}\right.\\ [7mm]
\operatorname{Aut}(L_3):& \left\{\begin{array}{lllll}\varphi(e_1)=a_{1}e_1-a_{1}a_{3}e_3, &\varphi(x_1)=x_1+a_{3}e_1-\frac{1}{2}a_{3}^2e_3,\\[1mm]
\varphi(e_2)=a_{2}e_2,&  \varphi(x_2)=x_2+a_{4}e_2,\\[1mm]
\varphi(e_3)=a_{1}^2e_3.\end{array}\right.\end{array}$$

\end{prop}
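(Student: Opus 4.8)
The plan is to verify Proposition~\ref{Aut3-dim} for each of the four algebras separately by the same elementary procedure, exactly as the single-line proof for $\operatorname{Aut}(R)$ suggests: an automorphism $\varphi$ is a linear bijection of the underlying vector space that commutes with the bracket, so I write $\varphi$ as an unknown matrix in the given basis and impose $\varphi([u,v]) = [\varphi(u),\varphi(v)]$ on all basis pairs. First I would record the structural features that constrain $\varphi$ from the outset: in each algebra the derived subalgebra $\mathfrak{g}^{[2]}$ (equivalently $\mathfrak{g}^2$) is spanned by $e_3$, the nilradical is $\operatorname{span}\{e_1,e_2,e_3\}$, and for $H$ the center-like data distinguishes the roles of $e_1,e_2$ whereas for $L_1,L_2,L_3$ the two "solvable directions" $x_1,x_2$ act with fixed eigenvalue patterns on the nilradical. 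Since $\varphi$ must preserve $\mathfrak{g}^2 = \langle e_3\rangle$ and the nilradical, one gets immediately $\varphi(e_3) = \mu e_3$ for a scalar $\mu\ne 0$, and $\varphi$ sends the nilradical to itself; this pins down the block-triangular shape of the matrix and reduces the problem to solving a small system of scalar equations.

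Next, for each algebra in turn, I would exploit the eigenvalue bookkeeping coming from the adjoint actions $\operatorname{ad}^R_{x_1}$ and $\operatorname{ad}^R_{x_2}$ (and the left actions where relevant). For instance in $L_1$, $[e_1,x_1]=e_1$ and $[e_2,x_2]=e_2$ with $[e_3,x_1]=e_3=[e_3,x_2]$ force $\varphi(e_1)\in\langle e_1\rangle$, $\varphi(e_2)\in\langle e_2\rangle$ (no $e_3$ admixture survives because $e_3$ has a different weight for $x_1$ alone), and then $\varphi(e_3)=\varphi([e_2,e_1])=[\varphi(e_2),\varphi(e_1)]$ gives $\varphi(e_3)=a_1a_2 e_3$; the relations involving $x_1,x_2$ then force $\varphi(x_1)=x_1$, $\varphi(x_2)=x_2$ up to nilradical corrections that must vanish. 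The computations for $L_2$ and $L_3$ are analogous but the relation $[e_1,e_1]=e_3$ (a nonzero "square") introduces the $-a_1a_3 e_3$ term in $\varphi(e_1)$ and the $-\tfrac12 a_3^2 e_3$ term in $\varphi(x_1)$: writing $\varphi(e_1)=a_1e_1+\beta e_3$ and $\varphi(x_1)=x_1+a_3 e_1+\gamma e_3+\cdots$, the identity $\varphi([e_1,e_1])=[\varphi(e_1),\varphi(e_1)]$ together with $\varphi([e_1,x_1])=[\varphi(e_1),\varphi(x_1)]$ and $\varphi([x_1,e_1])=[\varphi(x_1),\varphi(e_1)]$ yields $\beta=-a_1a_3$ and $\gamma=-\tfrac12 a_3^2$ after a short elimination. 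For $H$ one additionally has the outer symmetry swapping the two "$\mathfrak{sl}$-like" strands $(e_1,x_1)\leftrightarrow(e_2,x_2)$, which is why two families $\varphi_1,\varphi_2$ appear; I would detect this by noting that $\varphi$ may either preserve or interchange the two one-dimensional ideals $\langle e_1\rangle$ and $\langle e_2\rangle$ (both are characterized intrinsically as the images of $\operatorname{ad}^R_{x_i}-\mathrm{id}$ restricted appropriately), and handle the two cases in parallel.

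The remaining work is to confirm that the displayed formulas do define automorphisms — i.e. that the stated maps are invertible (clear, since $a_1,a_3\ne 0$, etc., make the triangular blocks nonsingular) and that they actually respect every bracket relation, not just the ones used to derive them. I expect the main obstacle to be bookkeeping rather than conceptual: keeping track of all the $e_3$-components (the "abelian tail") across the full list of defining relations for $H$, where there are ten relations and two parameter families, is where sign errors and omitted terms are most likely to creep in. A clean way to organize this is to first solve for the nilradical block, then substitute into the relations $[e_i,x_j]=\pm e_i$ and $[x_j,e_i]=\mp e_i$ and $[e_3,x_j]=e_3$ to pin the diagonal entries of $\varphi$ on $x_1,x_2$ to $1$, and finally read off the free $e_3$-coefficients as the genuine continuous parameters $a_2,a_4,a_5$ (resp.\ $a_3,a_4$). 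The verification that these choices are consistent (no overdetermination) is exactly the content of the one-line proof given in the paper, and I would present it at that level of detail, perhaps spelling out one representative case (say $L_2$) in a couple of lines to illustrate the mechanism.
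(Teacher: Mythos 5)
Your proposal is correct and matches the paper's approach: the paper's entire proof is ``the proof follows directly from straightforward computation,'' and your outline (impose $\varphi([u,v])=[\varphi(u),\varphi(v)]$ on basis pairs, use preservation of $\mathfrak{g}^2=\langle e_3\rangle$ and of the nilradical plus the weight data of $\operatorname{ad}^R_{x_i}$ to fix the block shape, then solve for the remaining $e_3$-tail coefficients) is exactly that computation spelled out, including the correct identification of the swap symmetry producing the second family $\varphi_2$ for $H$.
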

 \begin{proof} The proof follows directly from straightforward computation.
\end{proof}

Now we describe all $2$-coboundaries on $\mathfrak{g}$ with respect to the pair $(l,r),$
by the one-dimensional abelian algebra $\mathfrak{h}=\langle e_4\rangle,$
where $\mathfrak{g}$ one of the algebras $H,$ $L_1,$ $L_2$ and $L_3.$

For the basis elements $\{e_1, e_2, e_3, x_1, x_2\}$ and $f \in \operatorname{Hom}(\mathfrak{g},\mathfrak{h})$, we put
$$f(e_i)=c_{i}e_{4},\quad \ 1\leq i \leq 3, \quad f(x_1)=c_{4}e_{4}, \quad f(x_2)=c_{5}e_{4}.$$

Note that nilradical of these algebras is $N=\langle e_1, e_2, e_3\rangle,$
Since we consider $l,r: \mathfrak{g} \rightarrow \operatorname{End}(\mathfrak{h}) $ with the condition $N\subseteq \operatorname{ker} l \cap \operatorname{ker} r,$  it follows that
 $$\begin{array}{ll}l_{x_1}(e_{4})=\alpha_1 e_{4}, & r_{x_1}(e_{4})=\alpha_2e_{4},\\[1mm]
l_{x_2}(e_{4})=\beta_1 e_{4}, & r_{x_2}(e_{4})=\beta_2e_{4}.\end{array}$$

For any automorphism $\varphi \in \operatorname{Aut}(\mathfrak{g}),$ consider
$$df(x,y)=f\big([x,y]\big)-l_{\varphi (x)} f(y)-r_{\varphi(y)} f(x),$$ and we obtain the following result.

\begin{prop}\label{B2} Any $2$-coboundary with respect to the pair $(l,r)$ for the algebras $H,L_1,L_2$ and $L_3$ has the following forms:
$$B^2(H,l,r):\begin{array}{llll}
df(e_1,e_2)= c_{3},  & df(e_1,x_1)=(1-\alpha_2)c_1,& df(e_1,x_2)=-\beta_2c_1,\\
  df(e_2,e_1)= -c_{3},& df(e_2,x_1)=-\alpha_2c_{2}, & df(e_2,x_2)=(1-\beta_2)c_2,\\
 df(e_3,x_1)=(1-\alpha_2) c_{3}, & df(e_3,x_2)=(1-\beta_2)c_{3},&
 df(x_1,e_1)=-(1+\alpha_1)c_{1},\\
 df(x_1,e_2)=-\alpha_1 c_{2}, & df(x_1,e_3)=-(1+\alpha_1)c_{3}, &  df(x_1,x_1)=-(\alpha_1+\alpha_2) c_{4}, \\
  df(x_1,x_2)=-\alpha_1c_{5}-\beta_2c_4,&
df(x_2,e_1)=-\beta_1c_{1},  & df(x_2,e_2)=-(1+\beta_1) c_{2},\\
df(x_2,e_3)=-(1+\beta_1)c_{3},&
df(x_2,x_1)=-\beta_1c_{4}-\alpha_2c_5, & df(x_2,x_2)=-(\beta_1+\beta_2) c_{5}. \end{array}$$

$$B^2(L_1,l,r):\begin{array}{lllll}df(e_1,x_1)=(1-\alpha_2)c_1,  &df(e_1,x_2)=-\beta_2c_1,&df(e_2,e_1)= c_{3}, \\ df(e_2,x_1)=-\alpha_2c_{2},&
df(e_2,x_2)=(1-\beta_2)c_2, & df(e_3,x_1)=(1-\alpha_2) c_{3}, \\
 df(e_3,x_2)=(1-\beta_2)c_{3},& df(x_1,e_1)=-(1+\alpha_1)c_{1},&
df(x_1,e_2)=-\alpha_1 c_{2},\\
 df(x_1,e_3)=-\alpha_1c_{3},& df(x_1,x_1)=-(\alpha_1+\alpha_2) c_{4}, & df(x_1,x_2)=-\alpha_1c_{5}-\beta_2c_4,\\
df(x_2,e_1)=-\beta_1c_{1}, &
df(x_2,e_2)=-\beta_1 c_{2}, & df(x_2,e_3)=-\beta_1c_{3},\\
df(x_2,x_1)=-\beta_1c_{4}-\alpha_2c_5, &
df(x_2,x_2)=-(\beta_1+\beta_2) c_{5}.\end{array}$$

$$B^2(L_2,l,r):\begin{array}{llll}
df(e_1,e_1)= c_{3}, & df(e_1,x_1)=(1-\alpha_2)c_1,  &df(e_1,x_2)=-\beta_2c_1,\\
df(e_2,x_1)=-\alpha_2c_{2},& df(e_2,x_2)=(1-\beta_2)c_2, & df(e_3,x_1)=(2-\alpha_2) c_{3}, \\ df(e_3,x_2)=-\beta_2c_{3}, &
df(x_1,e_1)=-(1+\alpha_1)c_{1},& df(x_1,e_2)=-\alpha_1 c_{2},\\
df(x_1,e_3)=-\alpha_1c_{3},& df(x_1,x_1)=-(\alpha_1+\alpha_2) c_{4}, & df(x_1,x_2)=-\alpha_1c_{5}-\beta_2c_4,\\
df(x_2,e_1)=-\beta_1c_{1}, &
df(x_2,e_2)=-\beta_1 c_{2}, &df(x_2,e_3)=-\beta_1c_{3},\\
df(x_2,x_1)=-\beta_1c_{4}-\alpha_2c_5, & df(x_2,x_2)=-(\beta_1+\beta_2) c_{5}.\end{array}$$

$$B^2(L_3,l,r): \begin{array}{llll}
df(e_1,e_1)= c_{3}, & df(e_1,x_1)=(1-\alpha_2)c_1,  & df(e_1,x_2)=-\beta_2c_1, \\
df(e_2,x_1)=-\alpha_2c_{2}, & df(e_2,x_2)=(1-\beta_2)c_2, & df(e_3,x_1)=(2-\alpha_2) c_{3}, \\
df(e_3,x_2)=-\beta_2c_{3},& df(x_1,e_1)=-(1+\alpha_1)c_{1}, &
df(x_1,e_2)=-\alpha_1 c_{2},\\
df(x_1,e_3)=-\alpha_1c_{3},& df(x_1,x_1)=-(\alpha_1+\alpha_2) c_{4}, & df(x_1,x_2)=-\alpha_1c_{5}-\beta_2c_4,\\
 df(x_2,e_1)=-\beta_1c_{1},  & df(x_2,e_2)=-(1+\beta_1) c_{2}, &  df(x_2,e_3)=-\beta_1c_{3},\\
df(x_2,x_1)=-\beta_1c_{4}-\alpha_2c_5, & df(x_2,x_2)=-(\beta_1+\beta_2) c_{5}. \end{array}$$

\end{prop}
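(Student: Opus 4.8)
The proof is a direct, if lengthy, computation: for each of the four algebras one evaluates
$df(u,v)=f([u,v])-l_{\varphi(u)}f(v)-r_{\varphi(v)}f(u)$ on every ordered pair $(u,v)$ of basis vectors
$u,v\in\{e_1,e_2,e_3,x_1,x_2\}$, substituting the multiplication table of the algebra, the ansatz
$f(e_i)=c_ie_4$ $(1\le i\le 3)$, $f(x_1)=c_4e_4$, $f(x_2)=c_5e_4$, and the actions
$l_{x_1}(e_4)=\alpha_1e_4$, $r_{x_1}(e_4)=\alpha_2e_4$, $l_{x_2}(e_4)=\beta_1e_4$, $r_{x_2}(e_4)=\beta_2e_4$, the nilradical $N=\langle e_1,e_2,e_3\rangle$ acting trivially on $\mathfrak h$.

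First I would dispose of the automorphism $\varphi$. Since $N\subseteq\operatorname{ker}l\cap\operatorname{ker}r$, the restrictions $l_y|_{\mathfrak h}$ and $r_y|_{\mathfrak h}$ depend only on the image of $y$ in $\mathfrak g/N$; and by Proposition \ref{Aut3-dim} every automorphism of $L_1,L_2,L_3$, as well as every automorphism in the component $\varphi_1$ of $H$, fixes $x_1$ and $x_2$ modulo $N$ (the component $\varphi_2$ of $H$ only interchanges their classes, which merely permutes the parameters). Hence $l_{\varphi(u)}|_{\mathfrak h}=l_u|_{\mathfrak h}$, $r_{\varphi(v)}|_{\mathfrak h}=r_v|_{\mathfrak h}$, and one may take $\varphi=\mathrm{id}$, so that $df(u,v)=f([u,v])-l_uf(v)-r_vf(u)$. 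It then remains to run over the twenty-five ordered pairs: $f([u,v])$ equals $\pm c_ke_4$ when $[u,v]=\pm e_k$ and $0$ otherwise; $l_uf(v)$ contributes only for $u\in\{x_1,x_2\}$, equalling $\alpha_1f(v)$ or $\beta_1f(v)$; $r_vf(u)$ contributes only for $v\in\{x_1,x_2\}$, equalling $\alpha_2f(u)$ or $\beta_2f(u)$. For instance, in $H$ one gets $df(e_1,e_2)=f(e_3)=c_3e_4$, $df(e_1,x_1)=f(e_1)-r_{x_1}f(e_1)=(1-\alpha_2)c_1e_4$, $df(x_1,e_3)=f(-e_3)-l_{x_1}f(e_3)=-(1+\alpha_1)c_3e_4$, $df(x_1,x_2)=-l_{x_1}f(x_2)-r_{x_2}f(x_1)=-\alpha_1c_5e_4-\beta_2c_4e_4$, and the pairs all of whose contributions vanish (such as $(e_1,e_3)$) are simply dropped. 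Carrying this out for $H,L_1,L_2,L_3$ in turn reproduces the four displayed tables.

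The only real difficulty is the sheer bookkeeping — four multiplication tables with twenty-five ordered pairs apiece — while keeping straight which of $\alpha_1,\alpha_2,\beta_1,\beta_2$ occurs (left versus right action; $x_1$ versus $x_2$) and the signs produced by brackets such as $[e_2,e_1]=-e_3$. No conceptual obstruction arises beyond this care, and the statement follows by assembling the nonzero entries.
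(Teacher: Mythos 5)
Your proposal is correct and takes essentially the same approach as the paper, whose proof is simply ``straightforward computation'': you evaluate $df(u,v)=f([u,v])-l_{\varphi(u)}f(v)-r_{\varphi(v)}f(u)$ on all ordered pairs of basis vectors, and your preliminary observation that every automorphism in Proposition \ref{Aut3-dim} fixes $x_1,x_2$ modulo the nilradical (or, for the $\varphi_2$-component of $\operatorname{Aut}(H)$, merely swaps them), so that one may take $\varphi=\mathrm{id}$, is a correct justification the paper leaves implicit. Your sample entries for $H$ (e.g.\ $df(e_1,x_1)=(1-\alpha_2)c_1$, $df(x_1,e_3)=-(1+\alpha_1)c_3$, $df(x_1,x_2)=-\alpha_1c_5-\beta_2c_4$) all check against the stated tables.
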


 \begin{proof} The proof follows directly from straightforward computation.
\end{proof}

\subsection{Extensions of the algebra $H$}

Now, using the algorithm for constructing of solvable Leibniz algebras, we derive all possible extensions of the solvable Leibniz algebra $H$
by the one-dimensional abelian algebra $\mathfrak{h}=\langle e_4 \rangle$.
The condition \eqref{c3}, to the linear maps $l,r: H \rightarrow \operatorname{End}(\mathfrak{h})$ such that
$$\begin{array}{ll}l_{x_1}(e_{4})=\alpha_1 e_{4}, & r_{x_1}(e_{4})=\alpha_2e_{4},\\[1mm]
l_{x_2}(e_{4})=\beta_1 e_{4}, & r_{x_2}(e_{4})=\beta_2e_{4}.\end{array}$$
leads to the following system of equalities:
\begin{equation}\label{Hz}\begin{array}{ll} \alpha_1(\alpha_1+\alpha_2)=0, & \alpha_1(\beta_1+\beta_2)=0,\\[1mm]
\beta_1(\alpha_1+\alpha_2)=0,& \beta_1(\beta_1+\beta_2)=0.\end{array}\end{equation}

In the following Proposition, we describe all $2$-cocycles on $H$ with respect to the pair $(l,r),$ by the one dimensional abelian algebra $\mathfrak{h}=\langle e_4 \rangle$.

\begin{prop}{\label{Z2forH}}
A basis of $Z^2(H,l,r)$ consists of the following cocycles:
\begin{itemize}
\item[I.] $\alpha_1=\beta_1=\beta_2=0,$ $\alpha_2=1:$
\begin{equation}\label{eq4.2}\begin{array}{lllll}\omega(e_1,e_2)=b_{1,2},  & \omega(e_2,e_1)=-b_{1,2}, &  \omega(e_2,x_1)=b_{2,4}, & \omega(e_2,x_2)=-b_{2,4},\\ [1mm]
 \omega(e_3,x_2)=b_{1,2},&\omega(x_1,e_1)=b_{4,1},&\omega(x_1,e_3)=-b_{1,2},&\omega(x_1,x_1)=b_{4,4}, \\ [1mm]
 \omega(x_2,e_1)=b_{5,1} & \omega(x_2,e_2)=b_{2,4}, &\omega(x_2,e_3)=-b_{1,2}, &
\omega(x_2,x_1)=b_{5,4}.\end{array}\end{equation}

\item[II.] $\alpha_1=\alpha_2=\beta_1=0,$ $\beta_2=1:$
\begin{equation}\label{eq4.3}\begin{array}{lllllll}\omega(e_1,e_2)=b_{1,2},  & \omega(e_1,x_1)=b_{1,4}, & \omega(e_1,x_2)=-b_{1,4},& \omega(e_2,e_1)=-b_{1,2}, \\ [1mm]
 \omega(e_3,x_1)=b_{1,2},&\omega(x_1,e_1)=-b_{1,4},& \omega(x_1,e_2)=b_{4,2}, &
  \omega(x_1,e_3)=-b_{1,2}, \\ [1mm] \omega(x_1,x_2)=b_{4,5}, &
 \omega(x_2,e_2)=b_{5,2} & \omega(x_2,e_3)=-b_{1,2}, &
\omega(x_2,x_2)=b_{5,5}.\end{array}\end{equation}

\item[III.] $\alpha_1=\beta_1=\beta_2=0,$ $\alpha_2=2:$
\begin{equation}\label{eq4.4}\begin{array}{lllll}\omega(e_1,e_1)=b_{1,1}, & \omega(e_1,e_2)=b_{1,2},  & \omega(e_1,x_2)=b_{1,5},& \omega(e_2,e_1)=-b_{1,2},  \\ [1mm]
\omega(e_2,x_1)=2b_{2,5},& \omega(e_2,x_2)=b_{2,5},& \omega(e_3,x_1)=-b_{1,2}, & \omega(e_3,x_2)=b_{1,2}, \\ [1mm]
\omega(x_1,e_1)=b_{1,5}, & \omega(x_1,e_3)=-b_{1,2}, &  \omega(x_1,x_1)=b_{4,4}, &\\ [1mm]
\omega(x_2,e_2)=-b_{2,5}, &  \omega(x_2,e_3)=-b_{1,2}, &   \omega(x_2,x_1)=b_{5,4}.\end{array}\end{equation}

\item[IV.] $\alpha_1=\alpha_2=\beta_1=0,$ $\beta_2=2:$
\begin{equation}\label{eq4.5}\begin{array}{lllll} \omega(e_1,e_2)=b_{1,2}, & \omega(e_1,x_1)=b_{1,4},& \omega(e_1,x_2)=-2b_{1,4},&
 \omega(e_2,e_1)=-b_{1,2},  \\ [1mm] \omega(e_2,e_2)=b_{2,2}, & \omega(e_2,x_2)=b_{2,5},&
 \omega(e_3,x_1)=b_{1,2}, & \omega(e_3,x_2)=-b_{1,2}, & \\ [1mm]
\omega(x_1,e_1)=-b_{1,4}, &  \omega(x_1,e_3)=-b_{1,2}, &   \omega(x_1,x_2)=b_{4,5}, \\[1mm]
\omega(x_2,e_2)=b_{2,5}, & \omega(x_2,e_3)=-b_{1,2}, &  \omega(x_2,x_2)=b_{5,5}.\end{array}\end{equation}

\item[V.] $\alpha_1=\beta_1=0,$ $\alpha_2=\beta_2=1:$
\begin{equation}\label{eq4.6}\begin{array}{llll}\omega(e_1,e_2)=b_{1,2},  & \omega(e_1,x_2)=b_{1,5}, &\omega(e_2,e_1)=b_{2,1},& \omega(e_2,x_1)=b_{2,4},\\ [1mm]
\omega(x_1,e_1)=b_{1,5}, & \omega(x_1,e_3)=-b_{1,2},& \omega(x_1,x_1)=b_{4,4},   &\omega(x_1,x_2)=b_{4,4},\\ [1mm]
\omega(x_2,e_2)=b_{2,4},  &  \omega(x_2,e_3)=b_{2,1},& \omega(x_2,x_1)=b_{5,4},& \omega(x_2,x_2)=b_{5,4}.\end{array}\end{equation}

 \item[VI.] $\alpha_1=\beta_1=0,$ $ (\alpha_2,\beta_2)\neq (1,0),(0,1),(2,0),(0,2),(1,1):$
\begin{equation}\label{eq4.7}\begin{array}{llll} \omega(e_1,e_2)=b_{1,2},  & \omega(e_1,x_1)=(\alpha_2-1)b_{4,1},&
\omega(e_1,x_2)=\beta_2b_{4,1}, \\ [1mm]
\omega(e_2,e_1)=-b_{1,2},  & \omega(e_2,x_1)=\alpha_2b_{5,2}, & \omega(e_2,x_2)=(\beta_2-1)b_{5,2}, \\ [1mm]
\omega(e_3,x_1)=(1-\alpha_2)b_{1,2},& \omega(e_3,x_2)=(1-\beta_2)b_{1,2},& \omega(x_1,e_1)=b_{4,1}, \\ [1mm]
\omega(x_1,e_3)=-b_{1,2}, & \omega(x_1,x_1)=\alpha_2b_{4,4},  &\omega(x_1,x_2)=\beta_2b_{4,4},\\[1mm]
 \omega(x_2,e_2)=b_{5,2},& \omega(x_2,e_3)=-b_{1,2},& \omega(x_2,x_1)=\alpha_2b_{5,4}, \\ [1mm]
 \omega(x_2,x_2)=\beta_2b_{5,4} .\end{array}\end{equation}
\item[VII.] $\alpha_1=-\alpha_2=-1,$ $\beta_1=\beta_2=0:$
\begin{equation}\label{eq4.8}\begin{array}{llll}
\omega(e_1,e_2)=b_{1,2}, & \omega(e_1,x_1)=b_{1,4},& \omega(e_1,x_2)=b_{1,5}, & \omega(e_2,e_1)=-b_{1,2}, \\ [1mm]
 \omega(e_2,x_1)=b_{2,4}, &  \omega(e_2,x_2)=-b_{2,4}, &  \omega(e_3,x_2)=b_{1,2}, & \omega(x_1,e_1)=-b_{1,4},\\ [1mm]
\omega(x_1,e_2)=-b_{2,4},&  \omega(x_1,x_2)=b_{4,5},& \omega(x_2,e_1)=-b_{1,5},&  \omega(x_2,e_2)=b_{2,4}, \\ [1mm]
\omega(x_2,e_3)=-b_{1,2},  & \omega(x_2,x_1)=-b_{4,5}.\end{array}\end{equation}
\item[VIII.] $\alpha_1=\alpha_2=0,$ $\beta_1=-\beta_2=-1:$
\begin{equation}\label{eq4.9}\begin{array}{llll}
\omega(e_1,e_2)=b_{1,2}, & \omega(e_1,x_1)=b_{1,4},& \omega(e_1,x_2)=-b_{1,4}, & \omega(e_2,e_1)=-b_{1,2}, \\ [1mm]
 \omega(e_2,x_1)=b_{2,4}, &  \omega(e_2,x_2)=b_{2,5}, &  \omega(e_3,x_1)=b_{1,2}, & \omega(x_1,e_1)=-b_{1,4},\\ [1mm]
\omega(x_1,e_2)=-b_{2,4},& \omega(x_1,e_3)=-b_{1,2},  & \omega(x_1,x_2)=b_{4,5},& \omega(x_2,e_1)=-b_{1,5},\\ [1mm] \omega(x_2,e_2)=-b_{2,5}, & \omega(x_2,x_1)=-b_{4,5}.\end{array}\end{equation}

\item[IX.] $\alpha_1=-\alpha_2=-1,$ $\beta_1=-\beta_2=-2:$
\begin{equation}\label{eq4.10}\begin{array}{llll}
\omega(e_1,e_2)=b_{1,2}, & \omega(e_1,x_2)=b_{1,5}, &\omega(e_2,e_1)=-b_{1,2}, & \omega(e_2,e_3)=b_{2,3}, \\ [1mm]
\omega(e_2,x_1)=b_{2,4}, & \omega(e_2,x_2)=b_{2,4}, & \omega(e_3,e_2)=-b_{2,3},& \omega(e_3,x_2)=-b_{1,2},  \\ [1mm]
\omega(x_1,e_2)=-b_{2,4},& \omega(x_1,x_2)=b_{4,5}, & \omega(x_2,e_1)=-b_{1,5},& \omega(x_2,e_2)=-b_{2,4}, \\ [1mm]
\omega(x_2,e_3)=b_{1,2}, & \omega(x_2,x_1)=-b_{4,5}.\end{array}\end{equation}

\item[X.] $\alpha_1=-\alpha_2=-2,$ $\beta_1=-\beta_2=-1:$
\begin{equation}\label{eq4.11}\begin{array}{llll}
\omega(e_1,e_2)=b_{1,2}, & \omega(e_1,e_3)=b_{1,3}, & \omega(e_1,x_1)=b_{1,4}, & \omega(e_1,x_2)=b_{1,4},\\ [1mm]
\omega(e_2,e_1)=-b_{1,2}, &  \omega(e_2,x_1)=b_{2,4}, &  \omega(e_3,e_1)=-b_{1,3},& \omega(e_3,x_1)=-b_{1,2},\\ [1mm]
\omega(x_1,e_1)=-b_{1,4},&  \omega(x_1,e_2)=-b_{2,4},& \omega(x_1,e_3)=b_{1,2}, &
\omega(x_1,x_2)=b_{4,5}, \\ [1mm]
\omega(x_2,e_1)=-b_{1,4}, & \omega(x_2,x_1)=-b_{4,5}.\end{array}\end{equation}

\item[XI.] $\alpha_1=-\alpha_2$ $\beta_1=-\beta_2,$ $(\alpha_1, \beta_1) \neq (-1,0), (0,-1), (-2,-1), (-1,-2):$
\begin{equation}\label{eq4.12}\begin{array}{llll}\omega(e_1,e_2)=b_{1,2}, & \omega(e_1,x_1)=(1+\alpha_1)b_{1,4}, &\omega(e_1,x_2)=\beta_1b_{1,4},\\ [1mm]
 \omega(e_2,e_1)=-b_{1,2},&
  \omega(e_2,x_1)=\alpha_1b_{2,4}, & \omega(e_2,x_2)=(1+\beta_1) b_{2,4},  \\ [1mm]
  \omega(e_3,x_1)=(1+\alpha_1)b_{1,2}, & \omega(e_3,x_2)=(1+\beta_1)b_{1,2}, &\omega(x_1,e_1)=-(1+\alpha_1)b_{1,4}, \\ [1mm] \omega(x_1,e_2)=-\alpha_1b_{2,4},& \omega(x_1,e_3)=-(1+\alpha_1)b_{1,2}, &  \omega(x_1,x_2)=b_{4,5},\\ [1mm] \omega(x_2,e_1)=-\beta_1b_{1,4},& \omega(x_2,e_2)=-(1+\beta_1) b_{2,4},
& \omega(x_2,e_3)=-(1+\beta_1)b_{1,2},\\ [1mm]  \omega(x_2,x_1)=-b_{4,5}.\end{array}\end{equation}
\end{itemize}

\end{prop}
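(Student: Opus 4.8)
The plan is to mimic the computation carried out for the null-filiform algebra $R$ in the previous section: write down the most general bilinear map $\omega\colon H\otimes H\to\mathfrak h$, impose the $2$-cocycle condition \eqref{c4} on every triple of basis vectors, and solve the resulting homogeneous linear system in the entries of $\omega$, organised according to the admissible values of the representation parameters $\alpha_1,\alpha_2,\beta_1,\beta_2$.

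First I would set $\omega(u,v)=b_{u,v}e_4$ for each ordered pair $u,v$ in the basis $\{e_1,e_2,e_3,x_1,x_2\}$, using the index convention $4\leftrightarrow x_1$, $5\leftrightarrow x_2$, so that $Z^2(H,l,r)$ is cut out inside a $25$-dimensional coefficient space. Since $N=\langle e_1,e_2,e_3\rangle\subseteq\operatorname{ker} l\cap\operatorname{ker} r$ forces $l_{e_i}=r_{e_i}=0$, the only remaining data in $(l,r)$ are the scalars $\alpha_1,\alpha_2,\beta_1,\beta_2$, and, as already recorded before the statement, \eqref{c3} reduces to the system \eqref{Hz}. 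Solving \eqref{Hz} produces exactly three regimes: (a) $\alpha_1=\beta_1=0$ with $\alpha_2,\beta_2$ arbitrary; (b) $\alpha_1\neq0$, which forces $\alpha_2=-\alpha_1$ and $\beta_2=-\beta_1$; (c) $\alpha_1=\alpha_2=0$ with $\beta_1\neq0$ and $\beta_2=-\beta_1$. Cases I--VI of the statement fall under regime (a), cases VII, IX, X and part of XI under regime (b), and case VIII together with the rest of XI under regime (c).

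Next, within each regime I would evaluate $d^2_{\omega,l,r}(x,y,z)$, i.e. the left-hand side of \eqref{c4}, on all $125$ basis triples; since $\mathfrak h$ is one-dimensional, each evaluation contributes a single scalar equation in the coefficients $b_{i,j}$, whose coefficients are affine functions of $\alpha_1,\alpha_2,\beta_1,\beta_2$. Because the structure constants of $H$ are sparse and $l,r$ vanish on $N$, most of these equations turn out to be trivial or redundant, and the nontrivial ones typically express one $b_{i,j}$ as a $\ZZ$-linear combination of the others. Solving the resulting system for each admissible $(\alpha_1,\alpha_2,\beta_1,\beta_2)$ reproduces the families \eqref{eq4.2}--\eqref{eq4.12}; the parameters that stay free form the claimed basis, and conversely one verifies by direct substitution that every $\omega$ of the tabulated shape satisfies \eqref{c4}, so the lists are exhaustive. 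The subdivision into eleven cases is forced precisely because these affine coefficients can degenerate --- producing new free entries of $\omega$ --- at the resonant values $(\alpha_2,\beta_2)\in\{(1,0),(0,1),(2,0),(0,2),(1,1)\}$ in regime (a) and $(\alpha_1,\beta_1)\in\{(-1,0),(0,-1),(-2,-1),(-1,-2)\}$ in regimes (b)--(c).

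The main obstacle is organisational rather than conceptual: eleven representation cases, each with on the order of a hundred instances of \eqref{c4} to process. The delicate point is to pin down every affine combination of $\alpha_1,\alpha_2,\beta_1,\beta_2$ that can vanish --- that is, to detect all the resonances and cross-resonances --- because these, and only these, are where the cocycle space jumps in dimension (for example the extra entries $\omega(e_2,e_3)$ in case IX and $\omega(e_1,e_3)$ in case X encode the coincidences $\beta_2=2$ and $\alpha_2=2$). One must also extract an independent subset of relations from the redundant list of triples, and confirm by plugging back in that each tabulated $\omega$ is genuinely a cocycle, so that \eqref{eq4.2}--\eqref{eq4.12} exhaust $Z^2(H,l,r)$ rather than merely giving a necessary condition.
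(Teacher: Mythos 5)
Your plan coincides with the paper's own proof: the authors likewise set $\omega(u,v)=b_{i,j}e_4$ on the basis $\{e_1,e_2,e_3,x_1,x_2\}$, evaluate the cocycle identity \eqref{c4} on basis triples to obtain a linear system in the $b_{i,j}$ with coefficients affine in $\alpha_1,\alpha_2,\beta_1,\beta_2$, and then split according to the solutions of \eqref{Hz} (their Case 1: $\alpha_1=\beta_1=0$; Case 2: $\alpha_1=-\alpha_2$, $\beta_1=-\beta_2$) with the eleven subcases arising exactly at the resonant parameter values you identify. Your three-regime subdivision is only a cosmetic refinement of their two-case split, so the argument is essentially the same.
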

\begin{proof} For any $\omega\in Z^2(H,l,r),$
let us denote $\omega(x,y)=b_{i,j}e_4,$ $1\leq i,j\leq 5$, where $x,y\in \{e_1,e_2,e_3,x_1,x_2\}.$
Using the equality \eqref{c4}, we compute the cocycles and obtain the following relations:
$$\small{\begin{array}{llll}
\beta_1b_{1,1}=0, & \beta_2b_{1,1}=0, & \alpha_1b_{1,1}=0, & (2-\alpha_2)b_{1,1}=0,\\[1mm]
b_{1,3}=-b_{3,1},&  b_{2,3}=-b_{3,2}, & b_{3,3}=0,\\[1mm]
(2+\alpha_1)b_{1,3}=0, & (2-\alpha_2)b_{1,3}=0, & (1+\beta_1)b_{1,3}=0, &(1-\beta_2)b_{1,3}=0,\\[1mm]
\beta_1b_{1,4}=(\alpha_2-1)b_{5,1},&\beta_2b_{1,4}=(\alpha_2-1)b_{1,5},\\[1mm]
b_{1,4}=(\alpha_1+\alpha_2-1)b_{4,1},& (\alpha_1+1)b_{1,4}=(\alpha_2-1)b_{4,1},\\[1mm]
\beta_1b_{1,5}=\beta_2b_{5,1},& b_{1,5}=\alpha_1b_{5,1}+\beta_2b_{4,1},\\[1mm]
\alpha_1b_{2,2}=0, & \alpha_2b_{2,2}=0,& \beta_1b_{2,2}=0, & (2-\beta_2)b_{2,2}=0,\\[1mm]
(1-\alpha_2)b_{2,3}=0,& (2-\beta_2)b_{2,3}=0,& (\alpha_1+1)b_{2,3}=0, &(2+\beta_1)b_{2,3}=0, \\[1mm]
\alpha_1b_{2,4}=\alpha_2b_{4,2},& b_{2,4}=\beta_1b_{4,2}+\alpha_2b_{5,2}, & (1+\beta_1)b_{2,4}=\alpha_2b_{5,2},\\[1mm]
\alpha_1b_{2,5}=(\beta_2-1)b_{4,2},& \alpha_2b_{2,5}=(\beta_2-1)b_{2,4},\\[1mm]
(\beta_1+1)b_{2,5}=(\beta_2-1)b_{5,2},& b_{2,5}=(\beta_1+\beta_2-1)b_{5,2},\\[1mm]
b_{3,4}=(1-\alpha_2)b_{1,2},& b_{3,4}=(\alpha_2-1)b_{2,1},\\[1mm] (\alpha_1+1)b_{3,4}=(\alpha_2-1)b_{4,3},&b_{3,4}=(\alpha_1+\alpha_2-1)b_{4,3},\\[1mm]
(\beta_1+1)b_{3,4}=(\alpha_2-1)b_{5,3}, & b_{3,4}=\beta_1b_{4,3}+(\alpha_2-1)b_{5,3},\\[1mm]
 b_{3,5}=(1-\beta_2)b_{1,2},& b_{3,5}=(\beta_2-1)b_{2,1}, &  (\alpha_1+1)b_{3,5}=(\beta_2-1)b_{4,3},\\[1mm] b_{3,5}=\alpha_1b_{5,3}+(\beta_2-1)b_{4,3}, & (\beta_1+1)b_{3,5}=(\beta_2-1)b_{5,3}, & b_{3,5}=(\beta_1+\beta_2-1)b_{5,3},\\[1mm]
\beta_2b_{4,1}=(\alpha_1+1)b_{1,5},& \beta_1b_{4,1}=(1-\alpha_2)b_{5,1}, \\[1mm]
(1-\beta_2)b_{4,2}=\alpha_1b_{5,2}, & (\alpha_1+\alpha_2)b_{4,2}=0,\\[1mm]
 b_{4,3}=-(\alpha_1+1)b_{1,2},&  b_{4,3}=\alpha_1b_{2,1}-b_{1,2},\\[1mm]
 \alpha_1b_{4,4}=0, & \beta_1b_{4,4}=0,&  \alpha_1b_{5,5}=0, & \beta_1b_{5,5}=0,  \\[1mm]
 (\alpha_1+\alpha_2)b_{4,5}=\beta_2b_{4,4},&  b_{5,3}=b_{2,1}-\beta_1b_{1,2},& b_{5,3}=(1+\beta_1)b_{2,1},\\[1mm]
(\beta_1+\beta_2)b_{5,1}=0,& \alpha_2b_{5,5}=(\beta_1+\beta_2)b_{5,4},\\[1mm]
\beta_2b_{4,4}=\alpha_2b_{4,5}-\alpha_1b_{5,4}, &\alpha_2b_{5,5}=\beta_2b_{5,4}-\beta_1b_{4,5}. \end{array}}$$

 By \eqref{Hz}, we derive that $\alpha_1=\beta_1=0$ or $\alpha_1=-\alpha_2$ and $\beta_1=-\beta_2.$ Thus,
 to determine the values of $b_{i,j}$ we consider the following two cases:

\begin{itemize}
\item[Case 1.]Let $\alpha_1=\beta_1=0.$  Then we have
$$\begin{array}{lll}
b_{1,3}=0,& b_{1,4}= (\alpha_2-1)b_{4,1}, & b_{1,5}= \beta_2b_{4,1}, \\[1mm]
b_{2,3}=0, & b_{2,4}= \alpha_2b_{5,2}, & b_{2,5}= (\beta_2-1)b_{5,2}, \\[1mm]
b_{3,1}=0,& b_{3,2}=0, & b_{3,3}=0,\\[1mm]
b_{3,4}= (1-\alpha_2)b_{1,2}, & b_{3,5}= (\beta_2-1)b_{2,1}, \\[1mm]
b_{4,3}= - b_{1,2}, & b_{5,3}= b_{2,1}, \\[1mm]
 \end{array}$$
and the following restrictions
$$\begin{array}{llll}
 \beta_2b_{1,1}=0, & (\alpha_2-2)b_{1,1}=0,& (\alpha_2-1)(b_{1,2}+b_{2,1})=0, \\[1mm]
 \alpha_2b_{2,2}=0,& (\beta_2-2)b_{2,2}=0,&(\beta_2-1)(b_{1,2}+b_{2,1})=0,\\[1mm]
 \alpha_2b_{4,2}=0,& (\beta_2-1)b_{4,2}=0,& \alpha_2b_{4,5}=\beta_2b_{4,4},\\[1mm]
\beta_2b_{5,1}=0, & (\alpha_2-1)b_{5,1}=0,& \alpha_2b_{5,5}=\beta_2b_{5,4}.\\[1mm]
 \end{array}$$

Now, consider the following subcases:

\begin{itemize}
\item[1.1.] If $\alpha_2=1,$ $\beta_2=0,$ then we obtain $b_{2,1}=-b_{1,2}$ and
$b_{1,1}=b_{2,2}=b_{4,2}=b_{4,5}= b_{5,5}=0.$ Hence, we conclude that any $2$-cocycle on $H$ with respect to the pair $(l,r)$ is given by the expression in \eqref{eq4.2}.

\item[1.2.] If $\alpha_2=0,$ $\beta_2=1,$ then we obtain $b_{2,1}=-b_{1,2}$ and
$b_{1,1}=b_{2,2}=b_{4,4}=b_{5,1}= b_{5,4}=0.$ Hence, we get that any $2$-cocycle on $H$ with respect to the pair $(l,r)$ has the form \eqref{eq4.3}.

  \item[1.3.] If $\alpha_2=2,$ $\beta_2=0,$ then we obtain $b_{2,1}=-b_{1,2}$ and
$b_{2,2}=b_{4,2}=b_{4,5}= b_{5,1}= b_{5,5}=0.$ Hence, any $2$-cocycle on $H$ with respect to the pair $(l,r)$ has the form \eqref{eq4.4}.

\item[1.4.] If $\alpha_2=0,$ $\beta_2=2,$ then we obtain $b_{2,1}=-b_{1,2}$ and
$b_{1,1}=b_{4,2}=b_{4,4}= b_{5,1}= b_{5,4}=0.$ Hence, we get that any $2$-cocycle on $H$ with respect to the pair $(l,r)$ has the form \eqref{eq4.5}.

\item[1.5.] If $\alpha_2=1,$ $\beta_2=1,$ then we obtain $b_{4,5}=b_{4,4},$ $b_{5,5}=b_{5,4}$   and
$b_{1,1}=b_{2,2}=b_{4,2}= b_{5,1}=0.$ Hence, we get that any $2$-cocycle on $H$ with respect to the pair $(l,r)$ has the form \eqref{eq4.6}.

 \item[1.6.] If $ (\alpha_2,\beta_2)\neq (1,0),(0,1),(2,0),(0,2),(1,1), $ then we obtain $b_{2,1}=-b_{1,2},$  $\alpha_2b_{4,5}=\beta_2 b_{4,4},$ $\alpha_2b_{5,5}=\beta_2b_{5,4}$   and
$b_{1,1}=b_{2,2}=b_{4,2}= b_{5,1}=0.$ Hence, we get that any $2$-cocycle on $H$ with respect to the pair $(l,r)$ has the form \eqref{eq4.7}.
\end{itemize}

 \item [Case 2.] Let $\alpha_1=-\alpha_2$ and $\beta_1=-\beta_2.$  Then we have
$$\begin{array}{lllll}
b_{1,1}=0, & b_{2,1}=-b_{1,2},& b_{2,2}=0, & b_{3,1}=-b_{1,3}, \\[1mm]
  b_{3,2}=-b_{2,3}, & b_{3,3}=0,  & b_{3,4}= (1+\alpha_1)b_{1,2}, & b_{3,5}=(1+\beta_1)b_{1,2}, \\[1mm]
  b_{4,1}=-b_{1,4}, & b_{4,2}=-b_{2,4},& b_{4,3}= -b_{3,4}, & b_{4,4}=0, \\[1mm]
 b_{5,1}=-b_{1,5},& b_{5,2}= -b_{2,5}, &
 b_{5,3}=- b_{3,5},& b_{5,4}=-b_{4,5}, & b_{5,5}=0.
 \end{array}$$
and the following restrictions
$$\begin{array}{llll}
 (\alpha_1+2)b_{1,3}=0,& (\beta_1+1)b_{1,3}=0, & \beta_1b_{1,4}=(1+\alpha_1)b_{1,5}, \\[1mm]
 (\alpha_1+1)b_{2,3}=0,& (\beta_1+2)b_{2,3}=0,&  \alpha_1b_{2,5}=(1+\beta_1)b_{2,4}.
 \end{array}$$

Now, consider the following subcases:
\begin{itemize}
\item[2.1.] If $\alpha_1=-1,$ $\beta_1=0,$ then we obtain $b_{2,5}=-b_{2,4}$ and
$b_{1,3}=b_{2,3}=0.$ Hence, we get that any $2$-cocycle on $H$ with respect to the pair $(l,r)$ has the form \eqref{eq4.8}.

\item[2.2.] If $\alpha_1=0,$ $\beta_1=-1,$ then we obtain $b_{1,5}=-b_{1,4}$ and
$b_{1,3}=b_{2,3}=0.$ Hence, we get that any $2$-cocycle on $H$ with respect to the pair $(l,r)$ has the form \eqref{eq4.9}.

\item[2.3.] If $\alpha_1=-1,$ $\beta_1=-2,$ then we obtain $b_{2,5}=b_{2,4}$ and
$b_{1,3}=b_{1,4}=0.$ Hence, we get that any $2$-cocycle on $H$ with respect to the pair $(l,r)$ has the form \eqref{eq4.10}.

\item[2.4.] If $\alpha_1=-2,$ $\beta_1=-1,$ then we obtain $b_{1,5}=b_{1,4}$ and
$b_{2,3}=b_{2,5}=0.$ Hence, we get that any $2$-cocycle on $H$ with respect to the pair $(l,r)$ has the form \eqref{eq4.11}.

\item[2.5.] If $(\alpha_1,\beta_1)\neq (-1,0), (0,-1), (-1,-2), (-2,-1),$ then we obtain $$b_{1,3}=b_{2,3}=0, \quad \beta_1b_{1,4}=(1+\alpha_1)b_{1,5},\quad \alpha_1b_{2,5}=(1+\beta_1)b_{2,4}.$$

    In this case, we conclude that any $2$-cocycle on $H$ with respect to the pair $(l,r)$ has the form \eqref{eq4.12}.
\end{itemize}

\end{itemize}
\end{proof}

By Propositions \ref{B2} and \ref{Z2forH}, we obtain the following corollary:
\begin{cor}\label{cor4.4} We have the following:
\begin{itemize}
\item [I.] If $\alpha_1=\beta_1=\beta_2=0,$ $\alpha_2=1,$ then  $\operatorname{dim} Z^2(H,l,r) =6,$
$\operatorname{dim} B^2(H,l,r) =5;$
\item [II.] If $\alpha_1=\alpha_2=\beta_1=0,$ $\beta_2=1,$ then  $\operatorname{dim} Z^2(H,l,r) =6,$
$\operatorname{dim} B^2(H,l,r) =5;$

\item [III.] If $\alpha_1=\beta_1=\beta_2=0,$ $\alpha_2=2,$ then  $\operatorname{dim} Z^2(H,l,r) =6,$
$\operatorname{dim} B^2(H,l,r) =5;$
\item [IV.] If $\alpha_1=\alpha_2=\beta_1=0,$ $\beta_2=2,$ then  $\operatorname{dim} Z^2(H,l,r) =6,$
$\operatorname{dim} B^2(H,l,r) =5;$

\item [V.] If $\alpha_1=\beta_1=0,$ $\alpha_2=\beta_2=1$, then  $\operatorname{dim} Z^2(H,l,r) =6,$ $\operatorname{dim} B^2(H,l,r) =5;$

\item [VI.] If $\alpha_1=\beta_1=0,$ $ (\alpha_2,\beta_2)\neq (1,0),(0,1),(2,0),(0,2),(1,1),$ then $\operatorname{dim} Z^2(H,l,r) =5,$ $\operatorname{dim} B^2(H,l,r) =5;$

\item [VII.] If $\alpha_1=-\alpha_2=-1,$ $\beta_1=\beta_2=0$, then  $\operatorname{dim} Z^2(H,l,r) =5,$ $\operatorname{dim} B^2(H,l,r) =3;$

 \item [VIII.] If $\alpha_1=\alpha_2=0,$ $\beta_1=-\beta_2=-1$, then  $\operatorname{dim} Z^2(H,l,r) =5,$ $\operatorname{dim} B^2(H,l,r) =3;$

\item [IX.] If $\alpha_1=-\alpha_2=-1,$ $\beta_1=-\beta_2=-2$, then  $\operatorname{dim} Z^2(H,l,r) =5,$
$\operatorname{dim} B^2(H,l,r) =4;$
\item [X.] If $\alpha_1=-\alpha_2=-2,$ $\beta_1=-\beta_2=-1$, then  $\operatorname{dim} Z^2(H,l,r) =5,$
$\operatorname{dim} B^2(H,l,r) =4;$

\item [XI.] If $\alpha_1=-\alpha_2,$ $\beta_1=-\beta_2,$ and $(\alpha_1, \beta_1) \neq (-1,0), (0,-1), (-2,-1), (-1,-2),$ then $\operatorname{dim} Z^2(H,l,r) =4,$
$\operatorname{dim} B^2(H,l,r) =4.$

 \end{itemize}
\end{cor}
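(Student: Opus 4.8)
The plan is to derive both equalities, in each of the eleven regimes for $(\alpha_1,\alpha_2,\beta_1,\beta_2)$, directly from the explicit data already assembled in Propositions~\ref{B2} and~\ref{Z2forH}; no new idea is required, only a careful case-by-case count. The eleven items I--XI correspond one-to-one to the subcases 1.1--1.6 and 2.1--2.5 occurring in the proof of Proposition~\ref{Z2forH}, so I would organise the argument along those same lines.

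For $\dim Z^2(H,l,r)$ the computation is immediate: in each regime Proposition~\ref{Z2forH} provides a basis of $Z^2(H,l,r)$ whose members are indexed by the free coefficients $b_{i,j}$ appearing on the right-hand sides of the corresponding display~\eqref{eq4.2}--\eqref{eq4.12}. Hence $\dim Z^2(H,l,r)$ equals the number of distinct such $b_{i,j}$ --- six in \eqref{eq4.2}--\eqref{eq4.6} (items I--V), five in \eqref{eq4.7}--\eqref{eq4.11} (items VI--X), four in \eqref{eq4.12} (item XI) --- once one has checked, in each regime, that every listed coefficient really does occur with a nonzero weight for the values of $\alpha_i,\beta_i$ at hand.

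For $\dim B^2(H,l,r)$ I would use that $B^2(H,l,r)$ is the image of the linear map $d\colon\operatorname{Hom}(H,\mathfrak h)\to Z^2(H,l,r)$, $f\mapsto df$, whose domain is the five-dimensional space with coordinates $(c_1,\dots,c_5)$ fixed just before Proposition~\ref{B2}; therefore $\dim B^2(H,l,r)=5-\dim\ker d$. In each regime one substitutes the relevant $\alpha_i,\beta_i$ into the list of values $df(x,y)$ supplied by Proposition~\ref{B2} and reads off the common zero locus of those linear forms in $(c_1,\dots,c_5)$. In items I--VI every one of $c_1,\dots,c_5$ still occurs with a nonzero coefficient in some $df(x,y)$, so $d$ is injective and $\dim B^2=5$; in items VII and VIII two coordinates --- $c_1$ and $c_4$, up to the symmetry $x_1\leftrightarrow x_2$ --- drop out of every coboundary value, so $\dim\ker d=2$ and $\dim B^2=3$; and in items IX--XI exactly one non-trivial linear relation among the $c_i$ persists (for example in item~IX the pair $c_4,c_5$ enters every $df(x,y)$ only through the combination $c_5-2c_4$), whence $\dim\ker d=1$ and $\dim B^2=4$.

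The individual verifications are routine linear algebra; the one place where care is needed --- and the only point where a naive ``which letters appear'' count would be wrong --- is exactly the situation in items IX--XI, where $c_4$ and $c_5$ enter the coboundary values solely through a single combination such as $\beta_1 c_4-\alpha_1 c_5$ and so contribute $1$, not $2$, to $\operatorname{rank} d$; this is what forces $\dim B^2=\dim Z^2$ (equivalently $H^2(H,l,r)=0$) in item~XI. Accordingly I would record the whole computation as a single table, its rows the eleven regimes and its columns the surviving free $b_{i,j}$ (giving $\dim Z^2$) and a basis of $\ker d$ (giving $\dim B^2$), each entry checked against the displays of Propositions~\ref{B2} and~\ref{Z2forH}.
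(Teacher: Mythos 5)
Your proposal is correct and is exactly the computation the paper leaves implicit: the corollary is stated with no argument beyond the reference to Propositions \ref{B2} and \ref{Z2forH}, and your parameter count for $\dim Z^2(H,l,r)$ together with the rank computation for the coboundary map $d$ (in particular the observation that in items IX--XI the pair $c_4,c_5$ contributes only one dimension to the image, via the single form $\beta_1c_4-\alpha_1c_5$) is precisely the intended verification. The one caveat --- a defect of the paper's own statement rather than of your argument --- is the degenerate subcase $\alpha_2=\beta_2=0$ of item VI (equivalently the all-zero overlap with item XI), where $l=r=0$ and both dimensions drop to $3$; this does not affect the conclusion $H^2(H,l,r)=0$ that is actually used in Theorem \ref{thmH}.
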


In the following theorem, we determine all one-dimensional abelian extensions of the solvable Leibniz algebra $H.$
  \begin{thm}\label{thmH} Let $\widehat{H}$ be an extension of the solvable Leibniz algebra $H$ by the abelian algebra $\mathfrak{h}=\langle e_4 \rangle.$ Then
   $\widehat{H}$ is isomorphic to one of the following non-isomorphic algebras:
$$\widehat{H}_1:\begin{array}{lllll} [e_1,e_2]=e_3,& [e_2,e_1]=-e_3,& [e_1,x_1]=e_1, & [x_1,e_1]=-e_1,\\ [1mm]
[e_3,x_1]=e_3,&[x_1,e_3]=-e_3,& [e_4,x_1]=e_4,& [x_2,e_1]=e_4,\\ [1mm]
[e_2,x_2]=e_2,& [x_2,e_2]=-e_2,& [e_3,x_2]=e_3, & [x_2, e_3]=-e_3.\end{array} $$
$$\widehat{H}_2:\begin{array}{lllll} [e_1,e_1]=e_4,& [e_1,e_2]=e_3,& [e_2,e_1]=-e_3,& [e_1,x_1]=e_1, \\ [1mm]
[x_1,e_1]=-e_1, & [e_3,x_1]=e_3,&[x_1,e_3]=-e_3,& [e_4,x_1]=2e_4,\\ [1mm]
[e_2,x_2]=e_2,& [x_2,e_2]=-e_2,& [e_3,x_2]=e_3, & [x_2, e_3]=-e_3.\end{array} $$
$$\widehat{H}_3:\begin{array}{lllll}  [e_1,e_2]=e_3+e_4,& [e_2,e_1]=-e_3+e_4,& [e_1,x_1]=e_1, & [x_1,e_1]=-e_1,\\ [1mm]
 [e_3,x_1]=e_3,&[x_1,e_3]=-e_3+e_4,& [e_4,x_1]=e_4,& [e_4,x_2]=e_4,\\ [1mm]
[e_2,x_2]=e_2,& [x_2,e_2]=-e_2,& [e_3,x_2]=e_3, & [x_2, e_3]=-e_3+e_4.\end{array} $$
$$\widehat{H}_4:\begin{array}{lllll} [e_1,e_2]=e_3,& [e_2,e_1]=-e_3,& [e_1,x_1]=e_1+e_4, & [x_1,e_1]=-e_1-e_4,\\ [1mm]
[e_3,x_1]=e_3,&[x_1,e_3]=-e_3,& [e_4,x_1]=e_4,& [x_1,e_4]=-e_4, \\ [1mm]
[e_2,x_2]=e_2,& [x_2,e_2]=-e_2,& [e_3,x_2]=e_3, & [x_2, e_3]=-e_3.\end{array} $$
$$\widehat{H}_5(\delta):\begin{array}{lllll} [e_1,e_2]=e_3,& [e_2,e_1]=-e_3,& [e_1,x_1]=e_1+\delta e_4, & [x_1,e_1]=-e_1-\delta e_4,\\ [1mm]
[e_1,x_2]=e_4, & [x_2,e_1]=-e_4, & [e_3,x_1]=e_3,&[x_1,e_3]=-e_3, \\ [1mm]
[e_4,x_1]=e_4,& [x_1,e_4]=-e_4, & [e_2,x_2]=e_2,& [x_2,e_2]=-e_2,\\[1mm]
 [e_3,x_2]=e_3, & [x_2, e_3]=-e_3.\end{array} $$
$$\widehat{H}_6:\begin{array}{lllll} [e_1,e_2]=e_3,& [e_2,e_1]=-e_3,&[e_1,e_3]=e_4, & [e_3,e_1]=-e_4,\\ [1mm]
 [e_1,x_1]=e_1, & [x_1,e_1]=-e_1, & [e_3,x_1]=e_3, &[x_1,e_3]=-e_3,\\ [1mm]
 [e_4,x_1]=2e_4, & [x_1,e_4]=-2e_4, &   [e_2,x_2]=e_2,& [x_2,e_2]=-e_2, \\ [1mm]
[e_3,x_2]=e_3, & [x_2, e_3]=-e_3, & [e_4,x_2]=e_4, &[x_2,e_4]=-e_4.\end{array} $$

\end{thm}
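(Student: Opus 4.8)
The plan is to apply Proposition~\ref{prop2.8}: the one-dimensional abelian extensions $\widehat H=H(\omega,l,r)$ of $H$ by $\mathfrak h=\{e_4\}$ are classified up to isomorphism by the orbits of $\operatorname{Aut} H\times\operatorname{Aut}\mathfrak h$ on $\bigcup_{l,r}H^2(H,l,r)$, where $\operatorname{Aut}\mathfrak h$ consists of the scalings $\psi\colon e_4\mapsto\lambda e_4$, $\lambda\neq0$, and $\operatorname{Aut} H$ is described in Proposition~\ref{Aut3-dim}. Accordingly I would run the argument over the eleven families of admissible pairs $(l,r)$ from Proposition~\ref{Z2forH}, each pinned down by the eigenvalues $\alpha_1,\alpha_2,\beta_1,\beta_2$ of $l_{x_1},r_{x_1},l_{x_2},r_{x_2}$ acting on $e_4$ and constrained by \eqref{Hz}.

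First I would invoke Corollary~\ref{cor4.4} to discard Cases VI and XI, where $\dim Z^2(H,l,r)=\dim B^2(H,l,r)$, so that $H^2(H,l,r)=0$ and only the split extension arises. In each remaining family I would read off $H^2(H,l,r)$ by comparing the cocycle bases of Proposition~\ref{Z2forH} with the coboundaries of Proposition~\ref{B2}: Cases I--V give $\dim H^2=1$, with nontrivial class supported respectively on $\omega(x_2,e_1)$, $\omega(x_1,e_2)$, $\omega(e_1,e_1)$, $\omega(e_2,e_2)$ and $\omega(e_1,e_2)+\omega(e_2,e_1)$; Cases IX and X give $\dim H^2=1$, supported on $\omega(e_2,e_3)$ and $\omega(e_1,e_3)$; and Cases VII and VIII give $\dim H^2=2$, spanned by the classes of $\omega(e_1,x_1)$ and $\omega(e_1,x_2)$.

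Next I would exploit the outer symmetry of $H$: the automorphism $\varphi_2$ of Proposition~\ref{Aut3-dim} interchanges the blocks $\langle e_1,x_1\rangle$ and $\langle e_2,x_2\rangle$, and the induced action swaps Cases I and II, Cases III and IV, Cases VII and VIII, and Cases IX and X, so it suffices to treat Cases I, III, V, VII and X. For each of these I would compute the $\operatorname{Aut} H\times\operatorname{Aut}\mathfrak h$-action through $\omega'(x,y)=\psi\big(\omega(\varphi(x),\varphi(y))\big)$, the intertwining conditions \eqref{l5}--\eqref{l6} being automatic since $l,r$ act as scalars on $e_4$ and every $\varphi$ fixes $x_1,x_2$ modulo the nilradical. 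In the one-dimensional cases the nonzero classes form a single orbit after rescaling by a suitable $\lambda$ and a diagonal automorphism parameter; substituting the normalized cocycle together with $l$ and $r$ into the bracket \eqref{fff} then produces $\widehat H_1$ (Case I), $\widehat H_2$ (Case III), $\widehat H_3$ (Case V) and $\widehat H_6$ (Case X).

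The substantive step is Case VII (hence also VIII). Writing a class as $b_{1,4}[\overline\omega_1]+b_{1,5}[\overline\omega_2]$, where $\overline\omega_1$ deforms only the pair of brackets $[e_1,x_1],[x_1,e_1]$ and $\overline\omega_2$ only $[e_1,x_2],[x_2,e_1]$, I expect every admissible $\varphi$ to scale both coordinates by the same nonzero factor; consequently the set of classes with $b_{1,5}=0$, $b_{1,4}\neq0$ is one orbit, giving $\widehat H_4$, whereas for $b_{1,5}\neq0$ one normalizes $b_{1,5}=1$ and the remaining coordinate $\delta=b_{1,4}/b_{1,5}$ is an orbit invariant, yielding the one-parameter family $\widehat H_5(\delta)$, so that distinct values of $\delta$ give non-isomorphic algebras. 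I would close by checking that $\widehat H_1,\dots,\widehat H_6$ are pairwise non-isomorphic, which is immediate from the eigenvalue data $(\alpha_i,\beta_i)$ and from which products are deformed. The main obstacle is exactly this two-dimensional case: tracking the $\operatorname{Aut} H$-action on $H^2$ in Cases VII--VIII carefully enough to be certain that no less obvious automorphism collapses the family $\widehat H_5(\delta)$.
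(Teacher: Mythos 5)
Your proposal follows essentially the same route as the paper: eliminate Cases VI and XI via Corollary \ref{cor4.4}, use the automorphism $\varphi_2$ to identify the paired cases I/II, III/IV, VII/VIII, IX/X, normalize the one-dimensional cohomology classes to a single orbit by rescaling, and split the two-dimensional Case VII into the suborbits $\delta_2=0$ and $\delta_2\neq 0$ (both coordinates scaling by the same factor, so $\delta$ is an orbit invariant), yielding exactly $\widehat H_1,\dots,\widehat H_6$. The representatives you identify for each $H^2(H,l,r)$ and the resulting algebras match the paper's, so the argument is correct and not materially different.
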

\begin{proof} By Corollary \ref{cor4.4}, we have that $H^2(H,l,r)=0$ in the cases VI and XI. Thus, it is enough to consider the remaining cases.

\begin{itemize}
\item [I.] Let $\alpha_1=\beta_1=\beta_2=0,$ $\alpha_2=1.$ In this case $\dim H^2(H,l,r) =1$ and $2$-cocycle defined by $\omega(x_2,e_1)=e_4$ provides a basis for this space, i.e.,  $H^2(H,l,r) =\langle\overline{\omega}\rangle.$

For any automorphism $\varphi_1 \in \operatorname{Aut}(H)$ as in Proposition \ref{Aut3-dim}, and any $\psi \in \operatorname{Aut}(\mathfrak{h}),$ where $\psi(e_{4}) = \lambda e_{4}$, the element $\delta\overline{\omega} \in H^2(H,l,r)$ acts on  $\overline{\omega'} \in H^2(H,l,r)$, such that $\omega'(x_2, e_1) = \delta \lambda a_{1,1}e_{4},$ with $l'=l,$ $r'=r.$ Taking $\lambda =1$ and  $a_{1,1} = \frac 1 {\delta},$ we can assume that $\omega'(x_2, e_1) = e_{4}.$

Moreover, if we consider the action by the automorphism $\varphi_2$ of Proposition \ref{Aut3-dim}, then the element $\overline{\omega}$ acts on the element $\overline{\omega'} \in H^2(H,l,r)$, such that $\omega'(x_1, e_2) =  \lambda e_{4},$ with $r'_{x_1}=r_{x_2},$ $r'_{x_2}=r_{x_1}.$
Note that the 2-cocycle $\omega'(x_1, e_2) =  \lambda e_{4}$ provides a basis for the case II.

Therefore, we conclude that all elements of $H^2(H,l,r)$ in the cases I and II are in one orbit. Thus, we can define the products of the extension algebra $\widehat{H}=H\oplus\langle e_4 \rangle$ as follows:
$$[x_2,e_1]=\omega(x_2,e_1)=e_4, \quad [e_4,x_1]=r_{x_1}e_{4}=e_4. $$

Hence, we obtain the algebra $\widehat{H}_1$.

\item [III.] Let $\alpha_1=\beta_1=\beta_2=0,$ $\alpha_2=2.$ Then $\overline{\omega}$ is a basis of $H^2(H,l,r),$ where $\omega(e_1,e_1)=e_4.$
Automorphisms $\varphi_1 \in \operatorname{Aut}(H)$ and $\psi \in \operatorname{Aut}(\mathfrak{h})$ transform  $\delta\overline{\omega}$ to $\delta'\overline{\omega},$ where $$\delta'=\delta \lambda a_{1,1}^2.$$

Taking $\lambda =\frac 1 {\delta}$ and  $a_{1,1} = 1,$ we can suppose that $\delta'= 1.$

Moreover, if we consider the action by the automorphism $\varphi_2$, then the element $\overline{\omega}$ acts on the element $\overline{\omega'}$, such that $\omega'(e_2, e_2) =  \lambda e_{4},$ with $r'_{x_1}=r_{x_2},$ $r'_{x_2}=r_{x_1}.$
Note that the cocycle $\omega'(e_2, e_2) =  \lambda e_{4},$ provides a basis for Case IV.
Therefore, we derive that all elements of $H^2(H,l,r)$ in the cases III and IV belong to the same orbit.
Thus, we can define the products of the extension algebra $\widehat{H}=H\oplus\langle e_4 \rangle$ as follows:
$$[e_1,e_1]=\omega(e_1,e_1)=e_4, \quad [e_4,x_1]=r_{x_1}e_{4}=2e_4. $$

Therefore, we obtain the algebra  $\widehat{H}_2$.

\item [V.] Let $\alpha_1=\beta_1=0,$ $\alpha_2=\beta_2=1.$ Then $\dim H^2(H,l,r) =1$ and $2$-cocycle defined as $$\omega(e_1,e_2)=\omega(e_2,e_1)=\omega(x_1,e_3)=\omega(x_2,e_3)=e_4,$$ provides a basis on $H^2(H,l,r).$ An automorphisms $\varphi_1 \in \operatorname{Aut}(H)$ and $\psi \in \operatorname{Aut}(\mathfrak{h})$ act the element $\delta\overline{\omega}$ to $\delta'\overline{\omega}$ as $$\delta'=\delta \lambda a_{1,1} a_{2,2}.$$

Taking $\lambda =\frac 1 {\delta}$ and  $a_{1,1} = a_{2,2}=1,$ we can suppose $\delta'= 1.$
Define the products of the extension algebra $\widehat{H}=H\oplus\langle e_4 \rangle$ as follows:
$$\begin{array}{lllll} [e_1,e_2]=e_3+\omega(e_1,e_2)=e_3+e_4,& [e_2,e_1]=-e_3+\omega(e_2,e_1)=-e_3+e_4,\\[1mm]
[x_1,e_3]=-e_3+\omega(x_1,e_3)=-e_3+e_4, & [x_2,e_3]=-e_3+\omega(x_2,e_3)=-e_3+e_4,\\[1mm] [e_4,x_1]=r_{x_1}e_{4}=e_4,  & [e_4,x_2]=r_{x_2}e_{4}=e_4. \end{array}$$

Therefore, we obtain the algebra $\widehat{H}_3$.

\item [VII.] $\alpha_1=-\alpha_2=-1,$ $\beta_1=\beta_2=0.$ In this case $\dim H^2(H,l,r) =2$ and the basis of this space defined by the 2-cocycles
    $$\begin{array}{lllll}\omega_1(e_1, x_1) =e_4, &\omega_1(x_1, e_1)=-e_{4}, && \omega_2(e_1, x_2)=e_4, &\omega_2(x_2, e_1) =-e_{4}.\end{array}$$

Any automorphism $\varphi_1 \in \operatorname{Aut}(H)$ and $\psi \in \operatorname{Aut}(\mathfrak{h})$ act the element $\delta_1\overline{\omega_1}+\delta_2\overline{\omega_2}$ to $\delta_1'\overline{\omega_1}+\delta_2'\overline{\omega_2}$ as
$$\delta_1'=\delta_1 a_{1,1},\quad \delta_2'=\delta_2a_{1,1}.$$

\begin{itemize}
\item If $\delta_2=0$, then
$\delta_2'=0$ and $\delta_1\neq 0.$ Taking $ a_{1,1} = \frac 1 {\delta_1},$ we can suppose $\delta_1'=1$ and obtain the algebra $\widehat{H}_4.$
\item If $\delta_2\neq 0$, then choosing $a_{1,1}=\frac{1}{\delta_2},$ we get $\delta_2'=1$
and obtain the algebra $\widehat{H}_5(\delta).$
\end{itemize}

Moreover, under the action of the automorphism $\varphi_2$ the element $\delta_1\overline{\omega_1}+\delta_2\overline{\omega_2}$ is mapped to the element $\delta_1\overline{\omega_1'}+\delta_2\overline{\omega_2'}$, where
$$\begin{array}{lllll}\omega_1'(e_2, x_1) =e_4, &\omega_1'(x_1, e_2)=-e_{4}, && \omega_2'(e_2, x_2)=e_4, &\omega_2'(x_2, e_2) =-e_{4},\end{array}$$
and $l'_{x_1}=l_{x_2},$ $l'_{x_2}=l_{x_1},$ $r'_{x_1}=r_{x_2},$ $r'_{x_2}=r_{x_1}.$
Note that the cocycles $\omega'_1$ and $\omega'_2$ form a basis for Case VIII. Therefore, Case VIII also yields the algebras $\widehat{H}_4$ and $\widehat{H}_5(\delta).$

\item [IX.] $\alpha_1=-\alpha_2=-2,$ $\beta_1=-\beta_2=-1.$ Then 2-cocycle $\omega(e_1, e_3) = e_{4},$ $\omega(e_3, e_1) =-e_{4}$ form a basis of $H^2(H,l,r)$ and
automorphisms $\varphi_1 \in \operatorname{Aut}(H)$ and $\psi \in \operatorname{Aut}(\mathfrak{h})$ act the element $\delta\overline{\omega}$ to $\delta'\overline{\omega}$ as $$\delta'=\delta \lambda a_{1,1}^2 a_{2,2}.$$

Taking $\lambda =\frac 1 {\delta}$ and  $a_{1,1} = a_{2,2}=1,$ we can suppose $\delta'= 1.$
The new products of the algebra  $\widehat{H}=H\oplus\langle e_4 \rangle$ are defined as follows:
    $$\begin{array}{lllll} [e_1,e_3]=\omega(e_1,e_3) =  e_{4}, & [e_3,e_1]=-\omega(e_1,e_3) =-e_{4},\\[1mm] [e_4,x_1]=r_{x_1}e_{4}=2e_4, & [x_1,e_4]=l_{x_1}e_{4}=-2e_4, \\[1mm]
[e_4,x_2]=r_{x_2}e_{4}=e_4, & [x_2,e_4]=l_{x_2}e_{4}=-e_4, \end{array}$$

Therefore, we get the algebra $\widehat{H}_6$.

Moreover, under the action of the automorphism $\varphi_2$, the element $\delta\overline{\omega}$ is mapped to the element $\delta\overline{\omega'}$, where
$$\begin{array}{lllll}\omega(e_2, e_3) = e_4, &\omega(e_3, e_2)=-e_{4},\end{array}$$
and $l'_{x_1}=l_{x_2},$ $l'_{x_2}=l_{x_1},$ $r'_{x_1}=r_{x_2},$ $r'_{x_2}=r_{x_1}.$
Note that the cocycle $\omega$ forms a basis in Case X. Therefore, Case X also yields the algebra $\widehat{H}_6.$
\end{itemize}
\end{proof}

\subsection{Extensions of the algebra $L_1$}
Now, using the algorithm for constructing of solvable Leibniz algebras, we obtain all extensions of the solvable Leibniz algebra $L_1$
by the one-dimensional abelian algebra $\mathfrak{h}=\langle e_4 \rangle$.
The condition \eqref{c3}, for the linear maps $l,r: L_1 \rightarrow \operatorname{End}(\mathfrak{h})$ such that
$$\begin{array}{ll}l_{x_1}(e_{4})=\alpha_1 e_{4}, & r_{x_1}(e_{4})=\alpha_2e_{4},\\[1mm]
l_{x_2}(e_{4})=\beta_1 e_{4}, & r_{x_2}(e_{4})=\beta_2e_{4}.\end{array}$$
leads to the following equalities:
\begin{equation}\label{L_1z}\begin{array}{ll} \alpha_1(\alpha_1+\alpha_2)=0, & \alpha_1(\beta_1+\beta_2)=0,\\[1mm]
\beta_1(\alpha_1+\alpha_2)=0,& \beta_1(\beta_1+\beta_2)=0.\end{array}\end{equation}

In the following Proposition, we describe all $2$-cocycles on $L_1$ with respect to the pair $(l,r),$ where $\mathfrak{h}=\langle e_4 \rangle$.

\begin{prop}{\label{Z2forL1}}
A basis of $Z^2(L_1,l,r)$ consists of the following cocycles:
\begin{itemize}
\item[I.] $\alpha_1=\beta_1=\beta_2=0,$ $\alpha_2=1:$
\begin{equation}\label{eq4.13}\begin{array}{llll}\omega(e_2,e_1)=b_{2,1}, & \omega(e_2,x_1)= b_{2,4} & \omega(e_2,x_2)=-b_{2,4}, &\omega(e_3,x_2)=b_{2,1}, \\ [1mm]
\omega(x_1,e_1)=b_{4,1},&\omega(x_1,x_1)= b_{4,4}, &  \omega(x_2,e_1)=b_{5,1},
& \omega(x_2,x_1)=b_{5,4}.\end{array}\end{equation}
\item[II.] $\alpha_1=\beta_1=\beta_2=0,$ $\alpha_2=2:$
\begin{equation}\label{eq4.14}\begin{array}{llll} \omega(e_1,e_1)=b_{1,1}, & \omega(e_1,x_1)=b_{1,4}, & \omega(e_2,e_1)=b_{2,1}, &  \omega(e_2,x_1)=-2b_{2,5},\\ [1mm]
\omega(e_2,x_2)= b_{2,5} &
\omega(e_3,x_1)=-b_{2,1}, & \omega(e_3,x_2)=b_{2,1},&
 \omega(x_1,e_1)=b_{1,4}, \\ [1mm]
\omega(x_1,x_1)= b_{4,4}, & \omega(x_2,x_1)=b_{5,4}.\end{array}\end{equation}
\item[III.] $\alpha_1=\beta_1=0,$ $\alpha_2=2,$ $\beta_2=1:$
\begin{equation}\label{eq4.15}\begin{array}{llll} \omega(e_1,x_1)=b_{1,4},& \omega(e_1,x_2)= b_{1,4}, &
\omega(e_2,e_1)=b_{2,1}, &\omega(e_2,x_1)=b_{2,4}, \\ [1mm]
\omega(e_3,e_1)=b_{3,1},& \omega(e_3,x_1)=-b_{2,1}, &
\omega(x_1,e_1)=b_{1,4}, & \omega(x_1,x_1)=2 b_{4,5}, \\ [1mm]
\omega(x_1,x_2)= b_{4,5}, & \omega(x_2,x_1)= 2b_{5,5}, & \omega(x_2,x_2)=b_{5,5}.\end{array}\end{equation}
\item[IV.] $\alpha_1=\alpha_2=\beta_1=0, $ $\beta_2=1:$
\begin{equation}\label{eq4.16}\begin{array}{lllll} \omega(e_1,x_1)=b_{1,4},& \omega(e_1,x_2)= -b_{1,4},& \omega(e_2,e_1)=b_{2,1},\\ [1mm]
\omega(e_2,x_1)=b_{2,4}, & \omega(e_2,x_2)=b_{2,5},& \omega(e_3,x_1)=b_{2,1},\\ [1mm]
\omega(x_1,e_1)=-b_{1,4}, &
\omega(x_1,x_2)= b_{4,5},  & \omega(x_2,x_2)=b_{5,5}.\end{array}\end{equation}
\item[V.] $\alpha_1=\beta_1=0, $  $(\alpha_2,\beta_2)\neq (1,0), (2,0), (2,1), (0,1):$
\begin{equation}\label{eq4.17}\begin{array}{lllll} \omega(e_1,x_1)=(\alpha_2-1)b_{4,1}, & \omega(e_1,x_2)= \beta_2b_{4,1},&
 \omega(e_2,e_1)=b_{2,1}, \\ [1mm]
 \omega(e_2,x_1)=\alpha_2b_{2,4}, & \omega(e_2,x_2)=(\beta_2-1)b_{2,4},& \omega(e_3,x_1)=(1-\alpha_2)b_{2,1}, \\ [1mm]
\omega(e_3,x_2)=(1-\beta_2)b_{2,1},&
 \omega(x_1,e_1)=b_{4,1}, &
 \omega(x_1,x_1)=\alpha_2 b_{4,5}, \\ [1mm]
  \omega(x_1,x_2)= \beta_2b_{4,5}, &\omega(x_2,x_1)= \alpha_2 b_{5,5}, & \omega(x_2,x_2)=\beta_2b_{5,5}.\end{array}\end{equation}
\item[VI.] $\alpha_1=-\alpha_2=-1,$ $\beta_1=\beta_2=0:$
 \begin{equation}\label{eq4.18}\begin{array}{llll} \omega(e_1,x_1)=b_{1,4},&  \omega(e_1,x_2)=b_{1,5},&\omega(e_2,e_1)=b_{2,1},& \omega(e_2,x_1)=-b_{2,5},\\ [1mm]
 \omega(e_2,x_2)=b_{2,5}, & \omega(e_3,x_2)=b_{2,1}, & \omega(x_1,e_1)=-b_{1,4}, & \omega(x_1,e_2)=b_{2,5},\\ [1mm]
 \omega(x_1,e_3)= b_{2,1},&  \omega(x_1,x_2)=b_{4,5},  & \omega(x_2,e_1)=-b_{1,5},
 & \omega(x_2,x_1)=-b_{4,5}.\end{array}\end{equation}
\item[VII.] $\alpha_1=-\alpha_2,$ $\beta_1=-\beta_2,$ $(\alpha_1, \beta_1) \neq (-1,0):$
 \begin{equation}\label{eq4.19}\begin{array}{llll}\omega(e_1,x_1)=(1+\alpha_1)b_{1,5}, & \omega(e_1,x_2)=\beta_1b_{1,5}, &
  \omega(e_2,e_1)=b_{2,1},  \\ [1mm]
   \omega(e_2,x_1)=\alpha_1b_{2,5},& \omega(e_2,x_2)=(1+\beta_1)b_{2,5},&
  \omega(e_3,x_1)=(1+\alpha_1)b_{2,1},  \\ [1mm]
  \omega(e_3,x_2)=(1+\beta_1)b_{2,1}, & \omega(x_1,e_1)=-(1+\alpha_1)b_{1,5},&\omega(x_1,e_2)=-\alpha_1b_{2,5},\\[1mm]
  \omega(x_1,e_3)=-\alpha_1 b_{2,1},&\omega(x_1,x_2)=b_{4,5},&
\omega(x_2,e_1)=-\beta_1b_{1,5},\\[1mm]
\omega(x_2,e_2)=-\beta_1b_{2,5},&
 \omega(x_2,e_3)=-\beta_1 b_{2,1},
   & \omega(x_2,x_1)=-b_{4,5}.\end{array}\end{equation}

\end{itemize}

\end{prop}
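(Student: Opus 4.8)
The plan is to carry out, for $L_1$, the same linear-algebra computation used for $H$ in the proof of Proposition~\ref{Z2forH}. First I would write $\omega(x,y)=b_{i,j}e_4$ for every ordered pair of basis elements $x,y\in\{e_1,e_2,e_3,x_1,x_2\}$, which gives $25$ scalar unknowns $b_{i,j}$. The representation is already constrained: since the nilradical $N=\langle e_1,e_2,e_3\rangle$ must lie in $\operatorname{ker} l\cap\operatorname{ker} r$, the maps $l,r$ are encoded by the four scalars $\alpha_1,\alpha_2,\beta_1,\beta_2$, and condition~\eqref{c3} produces the relations~\eqref{L_1z}. Exactly as for $H$, \eqref{L_1z} forces the dichotomy $\alpha_1=\beta_1=0$ (Case~1) or $\alpha_1=-\alpha_2,\ \beta_1=-\beta_2$ (Case~2), along which the whole argument splits.

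Next I would impose the cocycle identity~\eqref{c4}, i.e.\ $d^2_{\omega, l, r}(x,y,z)=0$, on every triple $(x,y,z)$ of basis elements, using the structure constants of $L_1$ (note that $[e_1,e_2]$, $[x_1,e_3]$, $[x_2,e_2]$ and $[x_2,e_3]$ all vanish, so the resulting system is a bit smaller than for $H$). Each triple gives one linear relation among the $b_{i,j}$ with coefficients polynomial in $\alpha_i,\beta_i$. Collecting these, I would first extract the relations that hold unconditionally (the vanishing entries such as $b_{3,3}=0$, the ``antisymmetry'' relations coming from $e_3=[e_2,e_1]$, and the expressions of $b_{i,4},b_{i,5},b_{4,j},b_{5,j}$ in terms of the entries $b_{i,1}$ and $b_{2,1}$), and then isolate the relations whose coefficients degenerate for special parameter values (typical factors being $\alpha_2-1$, $\alpha_2-2$, $\beta_2-1$, $1+\alpha_1$, $1+\beta_1$); these degeneracies are precisely where additional cocycles appear.

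Within Case~1 I would then treat separately the subcases $(\alpha_2,\beta_2)\in\{(1,0),(2,0),(2,1),(0,1)\}$ and the generic one, and within Case~2 the subcases $(\alpha_1,\beta_1)=(-1,0)$ and the generic one; in each subcase the surviving free $b_{i,j}$ yield exactly the cocycles displayed in~\eqref{eq4.13}--\eqref{eq4.19}. Finally I would check the converse, namely that each displayed family indeed satisfies~\eqref{c4}, which is immediate once the solution of the linear system is in hand, and count free parameters to confirm the basis claim. The only real difficulty I anticipate is bookkeeping: keeping track of which of the several dozen scalar relations is active in each parameter regime, and ensuring that no degenerate-coefficient subcase is overlooked. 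There is no conceptual obstacle, and a good portion of the computation runs parallel to the proof of Proposition~\ref{Z2forH}.
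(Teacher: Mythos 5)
Your proposal matches the paper's proof essentially verbatim: the paper also writes $\omega(x,y)=b_{i,j}e_4$, imposes \eqref{c4} on all basis triples, uses \eqref{L_1z} to split into the cases $\alpha_1=\beta_1=0$ versus $\alpha_1=-\alpha_2,\ \beta_1=-\beta_2$, and then treats exactly the degenerate subcases $(\alpha_2,\beta_2)\in\{(1,0),(2,0),(2,1),(0,1)\}$ and $(\alpha_1,\beta_1)=(-1,0)$ separately from the generic ones. The approach and case decomposition are the same, so no further comparison is needed.
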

\begin{proof}
For any $\omega\in Z^2(L_1,l,r),$
denote by $\omega(x,y)=b_{i,j}e_4,$ $1\leq i,j\leq 5$, where $x,y\in \{e_1,e_2,e_3,x_1,x_2\}.$
Using the equality \eqref{c4}, we obtain the following relations:
$$\small{\begin{array}{lllll}
\beta_1b_{1,1}=0, & \beta_2b_{1,1}=0, & \alpha_1b_{1,1}=0, & (2-\alpha_2)b_{1,1}=0,\\[1mm]
\beta_1b_{1,2}=0,&  \beta_2b_{1,2}=0, & (1-\beta_2)b_{1,2}=0,& (1+\alpha_1)b_{1,2}=0,\\[1mm]
(1-\alpha_2)b_{1,2}=0,& b_{1,3}=0,&  b_{2,3}=0, & b_{3,2}=0,\quad b_{3,3}=0,\\[1mm]
\beta_1b_{1,4}=(\alpha_2-1)b_{5,1},&\beta_2b_{1,4}=(\alpha_2-1)b_{1,5},&
b_{1,4}=(\alpha_1+\alpha_2-1)b_{4,1},& (\alpha_1+1)b_{1,4}=(\alpha_2-1)b_{4,1},\\[1mm]
\beta_1b_{1,5}=\beta_2b_{5,1},& b_{1,5}=\alpha_1b_{5,1}+\beta_2b_{4,1},\\[1mm]
\alpha_1b_{2,2}=0, & \alpha_2b_{2,2}=0,& \beta_1b_{2,2}=0, & (2-\beta_2)b_{2,2}=0,\\[1mm]
(1-\beta_2)b_{2,2}=0,& \alpha_1b_{2,4}=\alpha_2b_{4,2},& \beta_1b_{2,4}=\alpha_2b_{5,2}, \\[1mm]
\alpha_1b_{2,5}=(\beta_2-1)b_{4,2},& \alpha_2b_{2,5}=(\beta_2-1)b_{2,4},&\beta_1b_{2,5}=(\beta_2-1)b_{5,2}\\[1mm]
\beta_1a_{3,1}=0, & (1-\beta_2)b_{3,1}=0,&\alpha_1a_{3,1}=0 &(2-\alpha_2)b_{3,1}=0,\\[1mm]
b_{3,4}=(1-\alpha_2)b_{2,1},& \beta_1b_{3,4}=(\alpha_2-1)b_{5,3},& (1-\beta_2)b_{3,4}=(1-\alpha_2)b_{3,5}, \\[1mm] b_{3,5}=(1-\beta_2)b_{2,1}, & \beta_1b_{3,5}=(\beta_2-1)b_{5,3},\\[1mm]
\beta_1b_{4,1}=(1-\alpha_2)b_{5,1},& \beta_1b_{4,2}=-\alpha_2b_{5,2}, & \beta_2b_{4,2}=-\alpha_1b_{5,2}, &(\alpha_1+\alpha_2)b_{4,2}=0, \\[1mm]
b_{4,3}=b_{1,2}-\alpha_1b_{2,1}, &\beta_1b_{4,3}=-\alpha_2b_{5,3}, &  \beta_2b_{4,3}=-\alpha_1b_{5,3},& (\alpha_1+\alpha_2)b_{4,3}=0, \\[1mm]
 \alpha_1b_{4,4}=0, & \beta_1b_{4,4}=0,&  \alpha_1b_{5,5}=0, & \beta_1b_{5,5}=0,  \\[1mm]
 (\alpha_1+\alpha_2)b_{4,5}=\beta_2b_{4,4},& (\beta_1+\beta_2)b_{5,1}=0, & (\beta_1+\beta_2)b_{5,2}=0,\\[1mm]
 b_{5,3}=-\beta_1b_{2,1},& (\beta_1+\beta_2)b_{5,3}=0,& \alpha_2b_{5,5}=(\beta_1+\beta_2)b_{5,4},\\[1mm]
\beta_2b_{4,4}=\alpha_2b_{4,5}-\alpha_1b_{5,4}, &\alpha_2b_{5,5}=\beta_2b_{5,4}-\beta_1b_{4,5}. \end{array}}$$

 By \eqref{L_1z}, we derive that $\alpha_1=\beta_1=0$ or $\alpha_1=-\alpha_2$ and $\beta_1=-\beta_2.$ Thus, we consider the following two cases:

\begin{itemize}
\item[Case 1.]Let $\alpha_1=\beta_1=0.$  Then we have
$$\begin{array}{llll}
b_{1,2}=0, & b_{1,3}=0,& b_{1,4}= (\alpha_2-1)b_{4,1}, & b_{1,5}= \beta_2b_{4,1}, \\[1mm]
b_{2,2}=0, &b_{2,3}=0, & b_{3,2}=0,& b_{3,3}=0, \\[1mm]
 b_{3,3}=0,& b_{3,4}= (1-\alpha_2)b_{2,1}, & b_{3,5}= (1-\beta_2)b_{2,1}, \\[1mm]
b_{4,2}=0, & b_{4,3}= 0, & b_{5,2}=0, & b_{5,3}= 0,
 \end{array}$$
and the following restrictions
$$\begin{array}{llll}
 \beta_2b_{1,1}=0, & (\alpha_2-2)b_{1,1}=0,& \alpha_2b_{2,5}=(\beta_2-1)b_{2,4},\\[1mm]
 (\beta_2-1)b_{3,1}=0, & (\alpha_2-2)b_{3,1}=0,&  \alpha_2b_{4,5}=\beta_2b_{4,4}, \\[1mm]
\beta_2b_{5,1}=0, & (\alpha_2-1)b_{5,1}=0,& \alpha_2b_{5,5}=\beta_2b_{5,4}.\\[1mm]
 \end{array}$$
Now, consider the following subcases:

\begin{itemize}
\item[1.1.] If $\alpha_2=1,$ $\beta_2=0,$ then we obtain $b_{2,5}=-b_{2,4}$ and
$b_{1,1}=b_{3,1}=b_{4,5}= b_{5,5}=0.$ Hence, we conclude that any $2$-cocycle on $L_1$ with respect to the pair $(l,r)$ has the form \eqref{eq4.13}.

  \item[1.2.] If $\alpha_2=2,$ $\beta_2=0,$ then we obtain $b_{2,4}=-2b_{2,5}$ and
$b_{3,1}=b_{4,5}= b_{5,1}= b_{5,5}=0.$ Hence, we deduce that any $2$-cocycle on $L_1$ with respect to the pair $(l,r)$ has the form \eqref{eq4.14}.

\item[1.3.] If $\alpha_2=2,$ $\beta_2=1,$ then we obtain $b_{4,4}=2b_{4,5},$ $b_{5,4}=2b_{5,5}$   and
$b_{1,1}=b_{2,5}= b_{5,1}=0.$ Hence, we get that any $2$-cocycle on $L_1$ with respect to the pair $(l,r)$ has the form \eqref{eq4.15}.

\item[1.4.] If $\alpha_2=0,$ $\beta_2=1,$ then we obtain
$b_{1,1}=b_{3,1}=b_{4,4}=b_{5,1}=b_{5,4}=0.$ Hence, we get that any $2$-cocycle on $L_1$ with respect to the pair $(l,r)$ has the form \eqref{eq4.16}.

 \item[1.5.] If $ (\alpha_2,\beta_2)\neq (1,0),(2,0),(2,1),(0,1),$ then we obtain $b_{1,1}=b_{3,1}= b_{5,1}=0$ and $$ \alpha_2b_{2,5}=(\beta_2-1)b_{2,4},\quad \alpha_2b_{4,5}=\beta_2 b_{4,4},\quad \alpha_2b_{5,5}=\beta_2b_{5,4}.$$

 In this case, we obtain that any $2$-cocycle on $L_1$ with respect to the pair $(l,r)$ has the form \eqref{eq4.17}.
\end{itemize}

 \item [Case 2.] Let $\alpha_1=-\alpha_2$ and $\beta_1=-\beta_2.$  Then we have
$$\begin{array}{lllll}
b_{1,1}=0, & b_{1,2}=0,& b_{1,3}=0, & b_{2,2}=0, \\[1mm]
b_{3,1}=0, & b_{3,2}=0, & b_{3,3}=0,  & b_{3,4}= (1+\alpha_1)b_{2,1}, \\[1mm]
 b_{3,5}=(1+\beta_1)b_{2,1}, &   b_{4,1}=-b_{1,4},  & b_{4,2}=-b_{2,4},&
b_{4,3}= -\alpha_1b_{2,1}, \\[1mm] b_{4,4}=0,&
b_{5,1}=-b_{1,5},& b_{5,3}=- \beta_1b_{2,1},& b_{5,4}=-b_{4,5}, & b_{5,5}=0,
 \end{array}$$
and the following restrictions
$$
 \beta_1b_{1,4}=(1+\alpha_1)b_{1,5}, \quad \alpha_1b_{2,5}=(1+\beta_1)b_{2,4},\quad  \beta_1b_{2,5}=-(1+\beta_1)b_{5,2}.
$$

Consider the following subcases:
\begin{itemize}
\item[2.1.] If $\alpha_1=-1,$ $\beta_1=0,$ then we obtain $b_{2,5}=-b_{2,4}$ and
$b_{5,2}=0.$ Hence, we get that any $2$-cocycle on $L_1$ with respect to the pair $(l,r)$ has the form \eqref{eq4.18}.

\item[2.2.] If  $(\alpha_1, \beta_1) \neq (-1,0),$ then we conclude that any $2$-cocycle on $L_1$ with respect to the pair $(l,r)$ has the form \eqref{eq4.19}.

\end{itemize}
\end{itemize}

\end{proof}

By Propositions \ref{B2} and \ref{Z2forL1}, we obtain the following corollary.
\begin{cor}\label{cor4.7} We have the following:
\begin{itemize}
\item[I.] If $\alpha_1=\beta_1=\beta_2=0,$ $\alpha_2=1,$ then  $\operatorname{dim} Z^2(L_1,l,r) =6,$
$\operatorname{dim} B^2(L_1,l,r) =5;$
\item[II.] If $\alpha_1=\beta_1=\beta_2=0,$ $\alpha_2=2,$ then  $\operatorname{dim} Z^2(L_1,l,r) =6,$
$\operatorname{dim} B^2(L_1,l,r) =5;$
\item[III.] If $\alpha_1=\beta_1=0,$ $\alpha_2=2,$ $\beta_2=1$, then  $\operatorname{dim} Z^2(L_1,l,r) =6,$
$\operatorname{dim} B^2(L_1,l,r) =5;$
\item[IV.] If $\alpha_1=\alpha_2=\beta_1=0,$ $\beta_2=1$, then  $\operatorname{dim} Z^2(L_1,l,r) =6,$
$\operatorname{dim} B^2(L_1,l,r) =4;$
\item[V.] If $\alpha_1=\beta_1=0, $  $(\alpha_2,\beta_2)\neq (1,0), (2,0), (2,1), (0,1),$ then  $\operatorname{dim} Z^2(L_1,l,r) =5,$
$\operatorname{dim} B^2(L_1,l,r) =5;$
\item[VI.] If $\alpha_1=-\alpha_2=-1,$ $\beta_1=\beta_2=0$, then  $\operatorname{dim} Z^2(L_1,l,r) =5,$
$\operatorname{dim} B^2(L_1,l,r) =3;$
\item[VII.] If $\alpha_1=-\alpha_2,$ $\beta_1=-\beta_2,$ $(\alpha_1, \beta_1) \neq (-1,0),$ then  $\operatorname{dim} Z^2(L_1,l,r) =4,$
$\operatorname{dim} B^2(L_1,l,r) =4.$

 \end{itemize}
\end{cor}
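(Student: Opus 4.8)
The corollary is a counting statement, so the plan is to extract $\dim Z^2(L_1,l,r)$ from Proposition~\ref{Z2forL1}, compute $\dim B^2(L_1,l,r)$ from Proposition~\ref{B2}, and record the resulting pair of numbers in each of the seven regimes of $(\alpha_1,\alpha_2,\beta_1,\beta_2)$ permitted by \eqref{L_1z}.

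For the cocycle dimension I would read it directly off Proposition~\ref{Z2forL1}: in each of its cases a spanning set of cocycles is exhibited, each basis cocycle carrying its own free scalar $b_{i,j}$, so linear independence is transparent and $\dim Z^2(L_1,l,r)$ is simply the number of parameters displayed — six in \eqref{eq4.13}--\eqref{eq4.16} (Cases I--IV), five in \eqref{eq4.17} and \eqref{eq4.18} (Cases V and VI), four in \eqref{eq4.19} (Case VII).

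For the coboundary dimension I would use that $B^2(L_1,l,r)$ is the image of the linear map $d\colon\operatorname{Hom}(L_1,\mathfrak h)\to Z^2(L_1,l,r)$, $f\mapsto df$, whose source is $5$-dimensional with coordinates $(c_1,\dots,c_5)$ determined by $f(e_i)=c_ie_4$ for $1\le i\le 3$, $f(x_1)=c_4e_4$ and $f(x_2)=c_5e_4$; hence $\dim B^2(L_1,l,r)=5-\dim\ker d$. One then determines $\ker d$ in each case by reading off, from the list of values of $df$ for $B^2(L_1,l,r)$ in Proposition~\ref{B2}, which coordinates $c_i$ survive: $c_3$ always survives through $df(e_2,e_1)=c_3$, and each of $c_1,c_2,c_4,c_5$ survives unless the numerical values of $\alpha_1,\alpha_2,\beta_1,\beta_2$ of that case annihilate every term in which it occurs. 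The outcome is: $d$ is injective in Cases I, II, III and (for non-trivial representation) V, so $\dim B^2=5$; in Case IV the coordinate $c_2$ is lost, since $\alpha_1=\alpha_2=\beta_1=0$ and $\beta_2=1$ kill $df(e_2,x_1),df(e_2,x_2),df(x_1,e_2),df(x_2,e_2)$, giving $\dim\ker d=1$ and $\dim B^2=4$; in Case VI both $c_1$ and $c_4$ are lost, since $\alpha_1=-1$, $\alpha_2=1$, $\beta_1=\beta_2=0$ kill every term in which they occur, giving $\dim\ker d=2$ and $\dim B^2=3$; and in Case VII the relations $\alpha_1=-\alpha_2$, $\beta_1=-\beta_2$ collapse the $(c_4,c_5)$-block of $d$ to the single equation $\beta_1c_4=\alpha_1c_5$ while $c_1,c_2,c_3$ survive, giving $\dim\ker d=1$ and $\dim B^2=4$. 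Collecting the pairs gives the seven assertions; incidentally $\dim H^2(L_1,l,r)$ then equals $1$ in Cases I--III, $0$ in Cases V and VII, and $2$ in Cases IV and VI, which is what the ensuing classification of $\widehat{L_1}$ will rest on.

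I do not expect a genuine obstacle here; the only care needed is to keep \eqref{L_1z} and the special values excluded in Cases V and VII in mind, so that the cancellation pattern of $B^2(L_1,l,r)$ invoked for each case is the one generic to that case rather than an accidentally more degenerate one. Two mild points deserve a second glance. In Case VII one must confirm that $\beta_1c_4=\alpha_1c_5$ cuts out a genuinely $1$-dimensional subspace of $\operatorname{span}\{c_4,c_5\}$, i.e. that $(\alpha_1,\beta_1)\ne(0,0)$ throughout this branch. And throughout one tacitly sets aside the trivial representation $\alpha_1=\alpha_2=\beta_1=\beta_2=0$, for which the extension is a central extension of $L_1$ lying outside the scope of the present construction; once that degenerate case is excluded the tables above are exactly as stated.
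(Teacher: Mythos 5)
Your proposal is correct and follows exactly the route the paper intends: the corollary is stated as an immediate consequence of Propositions~\ref{B2} and~\ref{Z2forL1}, obtained by counting the free parameters $b_{i,j}$ in each case of Proposition~\ref{Z2forL1} for $\dim Z^2$ and computing the rank of $f\mapsto df$ on the coordinates $c_1,\dots,c_5$ from Proposition~\ref{B2} for $\dim B^2$; your case-by-case kernel analysis (losing $c_2$ in Case IV, $c_1$ and $c_4$ in Case VI, and one direction of the $(c_4,c_5)$-plane in Case VII) matches the stated dimensions. Your caveat about the trivial representation $\alpha_1=\alpha_2=\beta_1=\beta_2=0$ (which formally falls under Cases V and VII but degenerates) is a legitimate point that the paper leaves implicit, and you resolve it the same way the paper's subsequent use of the corollary does.
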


Now, we can state the following result.
 \begin{thm}\label{thmL1} Let $\widehat{L}_1$ be an extension of the solvable Leibniz algebra $L_1$ by the abelian algebra $\mathfrak{h}=\langle e_4 \rangle.$ Then
   $\widehat{L}_1$ is isomorphic to one of the following pairwise non-isomorphic algebras:
  $$\widehat{L}^1_1:\begin{array}{lllll} [e_2,e_1]=e_3,& [e_1,x_1]=e_1, &[x_1,e_1]=-e_1, & [x_2,e_1]=e_4,\\ [1mm]
  [e_3,x_1]=e_3, & [e_2,x_2]=e_2,& [e_3,x_2]=e_3, & [e_4,x_1]=e_4,.\end{array} $$
$$\widehat{L}^2_1:\begin{array}{lllll} [e_1,e_1]=e_4,& [e_2,e_1]=e_3,& [e_1,x_1]=e_1, &[x_1,e_1]=-e_1, \\ [1mm]
  [e_3,x_1]=e_3, & [e_2,x_2]=e_2,& [e_3,x_2]=e_3, & [e_4,x_1]=2e_4.\end{array} $$
$$\widehat{L}^3_1:\begin{array}{llllll} [e_2,e_1]=e_3,& [e_3,e_1]=e_4,& [e_1,x_1]=e_1, &  [x_1,e_1]=-e_1, \\ [1mm]
 [e_3,x_1]=e_3, & [e_2,x_2]=e_2, & [e_3,x_2]=e_3, & [e_4,x_1]=2e_4,& [e_4,x_2]=e_4.\end{array} $$
 $$\widehat{L}^4_1:\begin{array}{lllll} [e_2,e_1]=e_3,& [e_1,x_1]=e_1, &[x_1,e_1]=-e_1,&   [e_2,x_1]=e_4, \\ [1mm]
[e_3,x_1]=e_3,& [e_2,x_2]=e_2,& [e_3,x_2]=e_3, & [e_4,x_2]=e_4.\end{array} $$
 $$\widehat{L}^5_1(\delta):\begin{array}{lllll} [e_2,e_1]=e_3,& [e_1,x_1]=e_1, &[x_1,e_1]=-e_1,&   [e_2,x_1]=\delta e_4, \\ [1mm]
[e_3,x_1]=e_3,& [e_2,x_2]=e_2+e_4,& [e_3,x_2]=e_3, & [e_4,x_2]=e_4.\end{array} $$
$$\widehat{L}^6_1:\begin{array}{lllll} [e_2,e_1]=e_3,& [e_1,x_1]=e_1+e_4, &[x_1,e_1]=-e_1-e_4,& [e_2,x_2]=e_2,\\ [1mm]
[e_3,x_1]=e_3,& [e_3,x_2]=e_3, & [e_4,x_1]=e_4, & [x_1,e_4]=-e_4,\end{array} $$
 $$\widehat{L}^7_1(\delta):\begin{array}{lllll} [e_2,e_1]=e_3,& [e_1,x_1]=e_1+\delta e_4, &  [e_1,x_2]=e_4,& [e_2,x_2]=e_2,\\ [1mm]
[e_3,x_1]=e_3,& [e_3,x_2]=e_3, &
[x_1,e_1]=-e_1-\delta e_4,&  [x_2,e_1]=-e_4,\\ [1mm]
[e_4,x_1]=e_4, & [x_1,e_4]=-e_4.\end{array} $$

 \end{thm}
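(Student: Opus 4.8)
The plan is to follow the same pattern as in the proof of Theorem~\ref{thmH}: discard the representations for which the second cohomology vanishes, compute an explicit basis of $H^2(L_1,l,r)$ in each remaining case from Propositions~\ref{B2} and~\ref{Z2forL1}, describe the $\operatorname{Aut}(L_1)\times\operatorname{Aut}(\mathfrak{h})$-orbits on these classes via Proposition~\ref{prop2.8}, and then read off the bracket of $\widehat{L}_1=L_1\oplus\{e_4\}$ from an orbit representative using \eqref{fff}. By Corollary~\ref{cor4.7} one has $\operatorname{dim} Z^2(L_1,l,r)=\operatorname{dim} B^2(L_1,l,r)$ precisely in Cases~V and~VII, so $H^2(L_1,l,r)=0$ and no nontrivial extension arises there; it remains to examine Cases~I--IV and~VI.

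In Cases~I, II and~III the cohomology is one-dimensional. Reducing the cocycles \eqref{eq4.13}, \eqref{eq4.14}, \eqref{eq4.15} modulo the corresponding coboundaries of Proposition~\ref{B2} leaves, respectively, the classes represented by $\omega(x_2,e_1)=e_4$, $\omega(e_1,e_1)=e_4$ and $\omega(e_3,e_1)=e_4$. Since $\operatorname{Aut}(L_1)$ consists only of the diagonal maps $\varphi(e_1)=a_1e_1$, $\varphi(e_2)=a_2e_2$, $\varphi(e_3)=a_1a_2e_3$, $\varphi(x_i)=x_i$ of Proposition~\ref{Aut3-dim} (in particular there is no analogue of the swap automorphism $\varphi_2$ used for $H$, so the cases do not merge), the action of $\operatorname{Aut}(L_1)\times\operatorname{Aut}(\mathfrak{h})$ rescales the single class by the nonzero factor $\lambda a_1$, $\lambda a_1^2$, $\lambda a_1a_2$ respectively; hence every nonzero cocycle lies in one orbit, and normalizing the coefficient to $1$ produces the algebras $\widehat{L}^1_1$, $\widehat{L}^2_1$ and $\widehat{L}^3_1$.

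The two cases that require more care are~IV ($\alpha_1=\alpha_2=\beta_1=0$, $\beta_2=1$) and~VI ($\alpha_1=-\alpha_2=-1$, $\beta_1=\beta_2=0$), where $\operatorname{dim} H^2(L_1,l,r)=2$. For Case~IV a basis of $H^2$ is given by classes $\overline{\omega_1},\overline{\omega_2}$ with $\omega_1(e_2,x_1)=e_4$ and $\omega_2(e_2,x_2)=e_4$, obtained from \eqref{eq4.16}; for Case~VI one takes $\omega_1(e_1,x_1)=e_4$, $\omega_1(x_1,e_1)=-e_4$ and $\omega_2(e_1,x_2)=e_4$, $\omega_2(x_2,e_1)=-e_4$ from \eqref{eq4.18}. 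Because the only automorphisms of $L_1$ are the diagonal ones, the induced action on $\delta_1\overline{\omega_1}+\delta_2\overline{\omega_2}$ multiplies both coordinates by one common nonzero scalar ($\lambda a_2$ in Case~IV, $\lambda a_1$ in Case~VI) and cannot mix $\overline{\omega_1}$ with $\overline{\omega_2}$. Therefore the orbits are exactly: $\delta_2=0$, $\delta_1\neq0$, normalized to $\delta_1=1$, giving $\widehat{L}^4_1$ in Case~IV and $\widehat{L}^6_1$ in Case~VI; and $\delta_2\neq0$, normalized to $\delta_2=1$, leaving the modulus $\delta=\delta_1$ and giving the one-parameter families $\widehat{L}^5_1(\delta)$ and $\widehat{L}^7_1(\delta)$. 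The corresponding brackets are obtained by plugging each representative $\omega$ together with the pair $(l,r)$ into \eqref{fff}.

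I expect the main obstacle to be twofold. First, the reduction of the cocycle spaces of Proposition~\ref{Z2forL1} modulo the coboundaries of Proposition~\ref{B2} must be carried out carefully enough to exhibit the asserted bases of $H^2$. Second, and more substantively, one must verify that the seven algebras listed are pairwise non-isomorphic; this is settled by comparing invariants such as the dimensions of the terms of the lower central and derived series, the dimension of the right annihilator $\{x:[\widehat{L}_1,x]=0\}$, the eigenvalue spectrum of the operators $\operatorname{ad}^R_{x_1},\operatorname{ad}^R_{x_2}$, and the isomorphism type of the nilradical, together with the observation that, once $\delta_2$ has been normalized to $1$ in Cases~IV and~VI, no remaining automorphism can rescale $\delta$, so distinct values of $\delta$ in $\widehat{L}^5_1(\delta)$ (respectively $\widehat{L}^7_1(\delta)$) yield genuinely non-isomorphic algebras.
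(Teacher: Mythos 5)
Your proposal follows essentially the same route as the paper: discard Cases V and VII where $Z^2=B^2$, take the same one- or two-dimensional bases of $H^2(L_1,l,r)$ in Cases I--IV and VI, use that $\operatorname{Aut}(L_1)$ is diagonal so the action only rescales by a single nonzero factor, and normalize to get $\widehat{L}^1_1$--$\widehat{L}^7_1(\delta)$. The only quibble is the scaling factor in Case III, which should be $\lambda a_1^2a_2$ rather than $\lambda a_1a_2$ (since $\varphi(e_3)=a_1a_2e_3$ and $\varphi(e_1)=a_1e_1$), but this does not affect the normalization argument.
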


\begin{proof} By Corollary \ref{cor4.7}, we have that $H^2(L_1,l,r)=0$ in cases V and VII. Thus, it is enough to consider the remaining  cases.

\begin{itemize}

\item [I.] $\alpha_1=\beta_1=\beta_2=0,$ $\alpha_2=1.$ In this case $\dim H^2(L_1,l,r) =1$ and $2$-cocycle defined by $\omega(x_2,e_1)=e_4$ form a basis of this space, i.e.,  $H^2(L_1,l,r) =\langle\overline{\omega}\rangle.$

Any automorphism $\varphi\in \operatorname{Aut}(L_1)$ from Proposition \ref{Aut3-dim}  and any automorphism $\psi \in \operatorname{Aut}(\mathfrak{h})$ with $\psi(e_{4}) = \lambda e_{4}$ act the element $\delta\overline{\omega} \in H^2(L_1,l,r)$ to $\delta'\overline{\omega} \in H^2(L_1,l,r)$, where
$$\delta' = \delta\lambda a_{1,1}, \quad l'=l, \quad r'=r.$$

By taking $\lambda =1$ and  $a_{1,1} = \frac 1 {\delta},$ we can assume  $\delta' =1.$
Thus, the new products of the algebra $\widehat{L}_1=L_1\oplus\langle e_4 \rangle$ are given by
    $$\begin{array}{llll}[x_2,e_1]=\omega(x_2,e_1) =e_{4},&
[e_4,x_1]=r_{x_1}e_{4}=e_4. \end{array}$$

Hence, we obtain  the algebra $\widehat{L}^1_1$.

\item [II.] $\alpha_1=\beta_1=\beta_2=0,$ $\alpha_2=2.$
In this case $\dim H^2(L_1,l,r) =1$ and $2$-cocycle defined by $\omega(e_1,e_1)=e_4$ forms a basis of this space, i.e.  $H^2(L_1,l,r) =\langle\overline{\omega}\rangle.$
Any automorphisms $\varphi \in \operatorname{Aut}(L_1)$ and $\psi \in \operatorname{Aut}(\mathfrak{h})$ act the element $\delta\overline{\omega}$ to $\delta'\overline{\omega}$, where
$$\delta' = \delta\lambda a_{1,1}^2.$$

Taking $\lambda =\frac 1 {\delta}$ and  $a_{1,1} = 1,$ we can suppose $\delta' =1.$
Thus, the new products of the algebra  $\widehat{L}_1=L_1\oplus\langle e_4 \rangle$ are defined as follows:
$$[e_1,e_1]=\omega(e_1,e_1)=e_4, \quad [e_4,x_1]=r_{x_1}e_{4}=2e_4. $$

Therefore, we obtain the algebra $\widehat{L}^2_1$.

\item [III.] $\alpha_1=\beta_1=0,$ $\alpha_2=2,$ $\beta_2=1.$
In this case $\dim H^2(L_1,l,r) =1$ and $H^2(L_1,l,r) =\langle\overline{\omega}\rangle,$ where $\omega(e_3,e_1)=e_4.$
Automorphisms $\alpha \in \operatorname{Aut}(L_1)$ and  $\psi \in \operatorname{Aut}(\mathfrak{h})$ act the element $\delta\overline{\omega}$ to $\delta'\overline{\omega}$, as follows:
$$\delta' = \delta\lambda a_{1,1}^2a_{2,2}.$$

Taking $\lambda =\frac 1 {\delta}$ and  $a_{1,1} = a_{2,2}=1,$ we can suppose $\delta' =1.$
Thus, the new products of the algebra  $\widehat{L}_1=L_1\oplus\langle e_4 \rangle$ are given by:
    $$\begin{array}{llll}[e_3,e_1]=\omega(e_3,e_1) =e_{4},&
[e_4,x_1]=r_{x_1}e_{4}=2e_4, & [e_4, x_2]=r_{x_2}e_{4}=e_4. \end{array}$$

Therefore, we obtain  the algebra $\widehat{L}^3_1$.

\item [IV.] $\alpha_1=\alpha_2=\beta_1=0, $ $\beta_2=1.$
In this case $\dim H^2(L_1,l,r) =2$ and the basis of this space is given by the 2-cocycles
    $$\begin{array}{lllll}\omega_1(e_2, x_1) =e_4, && \omega_2(e_2, x_2)=e_4.\end{array}$$

Automorphisms $\varphi\in \operatorname{Aut}(L_1)$ and $\psi \in \operatorname{Aut}(\mathfrak{h})$ act the element $\delta_1\overline{\omega_1}+\delta_2\overline{\omega_2}$ to $\delta_1'\overline{\omega_1}+\delta_2'\overline{\omega_2}$ as
$$\delta_1'=\delta_1 \lambda a_{2,2},\quad \delta_2'=\delta_2\lambda a_{2,2}.$$

\begin{itemize}
\item If $\delta_2=0$, then
$\delta_2'=0$ and $\delta_1\neq 0.$ Taking $\lambda=1,$ $a_{2,2} = \frac 1 {\delta_1},$ we can suppose $\delta_1'=1$ and obtain the algebra $\widehat{L}_1^4.$
\item If $\delta_2\neq 0$, then choosing $\lambda=1,$ $a_{2,2}=\frac{1}{\delta_2},$ we get $\delta_2'=1$
and obtain the algebra $\widehat{L}_1^5(\delta).$
\end{itemize}

\item [VI.] $\alpha_1=-\alpha_2=-1,$ $\beta_1=\beta_2=0.$
In this case $\dim H^2(L_1,l,r) =2$ and the basis of this space is given by the 2-cocycles
    $$\begin{array}{lllll}\omega_1(e_1, x_1) =e_4, &\omega_1(x_1, e_1)=-e_{4}, && \omega_2(e_1, x_2)=e_4, &\omega_2(x_2, e_1) =-e_{4}.\end{array}$$

Automorphisms $\varphi \in \operatorname{Aut}(L_1)$ and $\psi \in \operatorname{Aut}(\mathfrak{h})$ act the element $\delta_1\overline{\omega_1}+\delta_2\overline{\omega_2}$ to $\delta_1'\overline{\omega_1}+\delta_2'\overline{\omega_2}$ as
$$\delta_1'=\delta_1 \lambda a_{1,1},\quad \delta_2'=\delta_2\lambda a_{1,1}.$$

\begin{itemize}
\item If $\delta_2=0$ , then
$\delta_2'=0$ and $\delta_1\neq 0.$ Taking $\lambda=1,$ $a_{1,1} = \frac 1 {\delta_1},$ we can suppose $\delta_1'=1$ and obtain the algebra $\widehat{L}_1^6.$
\item If $\delta_2\neq 0$, then choosing $\lambda=1,$ $a_{1,1}=\frac{1}{\delta_2},$ we get $\delta_2'=1$ and obtain the algebra $\widehat{L}_1^7(\delta).$
\end{itemize}
\end{itemize}

\end{proof}

\subsection{Extensions of the algebra $L_2$}

Similarly to the previous subsections, we obtain the one-dimensional abelian extensions of the algebra $L_2$ as follows.

\begin{prop}{\label{L2}}
A basis of $Z^2(L_2,l,r)$ consists of the following cocycles:
\begin{itemize}
\item $\alpha_1=\beta_1=\beta_2=0,$ $\alpha_2=1:$
$$\begin{array}{llll}\omega(e_1,e_1)=b_{1,1}, &\omega(e_2,x_1)=-b_{2,5},& \omega(e_2,x_2)=b_{2,5}, &\omega(e_3,x_1)=b_{1,1}, \\ [1mm]
\omega(x_1,e_1)=b_{4,1}, & \omega(x_1,x_1)=b_{4,4}, &  \omega(x_2,e_1)=b_{5,1}, & \omega(x_2,x_1)=b_{5,4}. \\ [1mm]
\end{array}$$
\item $\alpha_1=\beta_1=\beta_2=0,$ $\alpha_2=3:$
$$\begin{array}{lllll}\omega(e_1,e_1)=b_{1,1}, &\omega(e_1,x_1)=2b_{4,1},& \omega(e_2,x_1)=-3b_{2,5},
&\omega(e_2,x_2)=b_{2,5},& \\ [1mm]
\omega(e_3,e_1)=b_{3,1},&\omega(e_3,x_1)=-b_{1,1}, & \omega(x_1,e_1)=b_{4,1}, & \omega(x_1,x_1)=b_{4,4},\\ [1mm] \omega(x_2,x_1)=b_{5,4}.
\end{array}$$
\item $\alpha_1=\alpha_2=\beta_1=0, $ $\beta_2=1:$
$$\begin{array}{lllll}\omega(e_1,e_1)=b_{1,1},
& \omega(e_1,x_1)=b_{1,4},&\omega(e_1,x_2)=-b_{1,4},& \omega(e_2,x_1)=b_{2,4}, \\ [1mm]
\omega(e_2,x_2)=b_{2,5},&\omega(e_3,x_1)=2b_{1,1},&\omega(e_3,x_2)=-b_{1,1}, &
\omega(x_1,e_1)=-b_{1,4}, \\ [1mm]
 \omega(x_1,x_2)=b_{4,5}, & \omega(x_2,x_2)=b_{5,5}.
\end{array}$$
\item $\alpha_1= \beta_1=0, $  $\alpha_2=1,$ $\beta_2=1:$
$$\begin{array}{lllll}\omega(e_1,e_1)=b_{1,1}, & \omega(e_1,x_2)=b_{1,5},& \omega(e_2,e_1)=b_{2,1},
&\omega(e_2,x_1)=b_{2,4},\\ [1mm]
\omega(e_3,x_1)=b_{1,1}, &\omega(e_3,x_2)=-b_{1,1},
&\omega(x_1,e_1)=b_{1,5}, & \omega(x_1,x_1)=b_{4,4},  \\ [1mm]
\omega(x_1,x_2)=b_{4,4}, & \omega(x_2,x_1)=b_{5,5},& \omega(x_2,x_2)=b_{5,5}.
\end{array}$$
\item $\alpha_1=\beta_1=0, $ $(\alpha_2,\beta_2)\neq (1,0), (3,0), (0,1), (1,1):$
$$\begin{array}{lllll}\omega(e_1,e_1)=b_{1,1}, & \omega(e_1,x_1)=(\alpha_2-1)b_{4,1},& \omega(e_1,x_2)=\beta_2b_{4,1}, \\ [1mm] \omega(e_2,x_1)=\alpha_2b_{2,4},  &
 \omega(e_2,x_2)=(\beta_2-1)b_{2,4},&
 \omega(e_3,x_1)=(2-\alpha_2)b_{1,1},\\ [1mm]
 \omega(e_3,x_2)=-\beta_2b_{1,1}, & \omega(x_1,e_1)=b_{4,1}, &
\omega(x_1,x_1)=\alpha_2b_{4,4}, \\ [1mm]
\omega(x_1,x_2)=\beta_2b_{4,4},&
\omega(x_2,x_1)=\alpha_2b_{5,4}, & \omega(x_2,x_1)=\beta_2b_{5,4}.
\end{array}$$
\item $\alpha_1=-\alpha_2=-1,$ $\beta_1=\beta_2= 0:$
 $$\begin{array}{llllll} \omega(e_1,e_1)=b_{1,1},& \omega(e_1,x_1)=b_{1,4},& \omega(e_1,x_2)=b_{1,5},& \omega(e_2,x_1)=b_{2,4},\\[1mm]
 \omega(e_2,x_2)=-b_{2,4}, & \omega(e_3,x_1)=b_{1,1}, &
 \omega(x_1,e_1)=-b_{1,4},& \omega(x_1,e_2)=-b_{2,4}, \\ [1mm]
\omega(x_1,e_3)=b_{1,1},& \omega(x_1,x_2)=b_{4,5}, &
\omega(x_2,e_1)=-b_{1,5},
 & \omega(x_2,x_1)=-b_{4,5}.\end{array}$$
\item $\alpha_1=-\alpha_2,$ $\beta_1=-\beta_2,$ $(\alpha_1,\beta_1)\neq (-1,0):$
$$\begin{array}{llllll} \omega(e_1,e_1)=b_{1,1}, &
\omega(e_1,x_1)=-(1+\alpha_1)b_{5,1},& \omega(e_1,x_2)=-\beta_1b_{5,1}, \\ [1mm]
\omega(e_2,x_1)=-\alpha_1b_{5,2},&
\omega(e_2,x_2)=-(1+\beta_1)b_{5,2},&
\omega(e_3,x_1)=(2+\alpha_1)b_{1,1},\\ [1mm] \omega(e_3,x_2)=\beta_1b_{1,1}, &\omega(x_1,e_1)=(1+\alpha_1)b_{5,1},&
\omega(x_1,e_2)=\alpha_1b_{5,2}, \\ [1mm]
\omega(x_1,e_3)=-\alpha_1 b_{1,1},& \omega(x_1,x_2)=b_{4,5},& \omega(x_2,e_1)=\beta_1b_{5,1},\\ [1mm]
\omega(x_2,e_2)=\beta_1b_{5,2}, &
 \omega(x_2,e_3)=-\beta_1 b_{1,1},&  \omega(x_2,x_1)=-b_{4,5}.\end{array}$$
\end{itemize}

\begin{proof} The proof is similar to that of Propositions \ref{Z2forH} and \ref{Z2forL1}. \end{proof}

\end{prop}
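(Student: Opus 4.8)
The plan is to follow verbatim the three‑step strategy already used for Propositions~\ref{Z2forH} and \ref{Z2forL1}. First I would write an arbitrary $\omega\in Z^2(L_2,l,r)$ as $\omega(x,y)=b_{i,j}e_4$ with $x,y$ ranging over the basis $\{e_1,e_2,e_3,x_1,x_2\}$ and $1\le i,j\le 5$, and recall that, as for $H$ and $L_1$, condition \eqref{c3} applied to $l,r\colon L_2\to\operatorname{End}(\mathfrak{h})$ forces $\alpha_1(\alpha_1+\alpha_2)=\alpha_1(\beta_1+\beta_2)=\beta_1(\alpha_1+\alpha_2)=\beta_1(\beta_1+\beta_2)=0$, so that either $\alpha_1=\beta_1=0$, or $\alpha_1=-\alpha_2$ and $\beta_1=-\beta_2$. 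These two branches will organise the whole argument exactly as in Proposition~\ref{Z2forL1}.

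Then I would impose the cocycle identity \eqref{c4}, i.e. $d^2_{\omega,l,r}(x,y,z)=0$, on every triple of basis elements of $L_2$. Since $L_2$ has the very sparse multiplication $[e_1,e_1]=e_3$, $[e_1,x_1]=e_1$, $[x_1,e_1]=-e_1$, $[e_3,x_1]=2e_3$, $[e_2,x_2]=e_2$, and since $N=\langle e_1,e_2,e_3\rangle\subseteq\operatorname{ker} l\cap\operatorname{ker} r$ forces $l_{e_i}=r_{e_i}=0$, most of these identities are short and collapse to a linear system on the $b_{i,j}$ whose coefficients are affine in $(\alpha_2,\beta_2)$ in the first branch and in $(\alpha_1,\beta_1)$ in the second. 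Typical relations have the shape $(k-\alpha_2)b_{i,j}=0$ or $(k+\alpha_1)b_{i,j}=0$, where the integer $k$ records how the pair $e_i,e_j$ sits relative to the weight‑$1$ vectors $e_1,e_2$ and the weight‑$2$ vector $e_3$; for instance $d^2_{\omega,l,r}(e_3,x_1,e_1)=0$ gives $(3-\alpha_2)b_{3,1}=0$, which is what makes $\alpha_2=3$ exceptional, and similar relations involving $1-\alpha_2$, $1-\beta_2$ and $\beta_2$ produce the remaining special pairs $(\alpha_2,\beta_2)\in\{(1,0),(0,1),(1,1)\}$.

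Having tabulated the system, I would split into subcases exactly as displayed in the statement: in the branch $\alpha_1=\beta_1=0$, the four subcases $(\alpha_2,\beta_2)\in\{(1,0),(3,0),(0,1),(1,1)\}$ together with the generic subcase $(\alpha_2,\beta_2)\notin\{(1,0),(3,0),(0,1),(1,1)\}$; and in the branch $\alpha_1=-\alpha_2$, $\beta_1=-\beta_2$, the subcase $\alpha_1=-1$, $\beta_1=0$ together with the generic subcase $(\alpha_1,\beta_1)\neq(-1,0)$. In each subcase the surviving equations express every $b_{i,j}$ in terms of the handful of free parameters listed in the proposition, and substituting back confirms that these bilinear maps are indeed $2$-cocycles, yielding the stated bases.

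The only genuine difficulty is the bookkeeping: one must expand $d^2_{\omega,l,r}$ on the $5^3$ triples, record which relation comes from which triple, and for each value of $(\alpha_1,\alpha_2,\beta_1,\beta_2)$ decide which coefficients vanish; since $\mathfrak{h}$ is one‑dimensional and abelian, no subtlety arises from the module structure itself. As a consistency check one can compare the resulting $\dim Z^2(L_2,l,r)$ with $\dim B^2(L_2,l,r)$ obtained from Proposition~\ref{B2}, just as in Corollaries~\ref{cor3.3} and \ref{cor4.4}; this determines $\dim H^2(L_2,l,r)$ in each subcase and feeds into the subsequent classification of the extensions $\widehat{L}_2$.
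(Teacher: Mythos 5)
Your proposal is correct and follows essentially the same route as the paper, whose own proof of this proposition is simply a reference to the identical computations carried out in detail for Propositions \ref{Z2forH} and \ref{Z2forL1}: reduce via \eqref{c3} to the two branches $\alpha_1=\beta_1=0$ or $\alpha_1=-\alpha_2$, $\beta_1=-\beta_2$, impose $d^2_{\omega,l,r}=0$ on all basis triples, and split into the exceptional parameter values. Your sample computation $d^2_{\omega,l,r}(e_3,x_1,e_1)=(3-\alpha_2)b_{3,1}$ checks out and correctly identifies why $\alpha_2=3$ is the exceptional weight for $L_2$ (rather than $\alpha_2=2$ as for $H$ and $L_1$).
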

\begin{cor}\label{corL.2} We have the following:
\begin{itemize}
\item If $\alpha_1=\beta_1=\beta_2=0$, $\alpha_2=1,$ then  $\operatorname{dim} Z^2(L_2,l,r) =6,$
$\operatorname{dim} B^2(L_2,l,r) =5;$
\item If $\alpha_1=\beta_1=\beta_2=0$, $\alpha_2=3,$  then  $\operatorname{dim} Z^2(L_2,l,r) =6,$
$\operatorname{dim} B^2(L_2,l,r) =5;$
\item If $\alpha_1=\alpha_2=\beta_1=0,$ $\beta_2=1$, then  $\operatorname{dim} Z^2(L_2,l,r) =6,$
$\operatorname{dim} B^2(L_2,l,r) =4;$
\item If $\alpha_1=\beta_1=0,$  $\alpha_2=1,$ $\beta_2=1$, then  $\operatorname{dim} Z^2(L_2,l,r) =6,$
$\operatorname{dim} B^2(L_2,l,r) =5;$
\item If $\alpha_1=\beta_1=0,$  $(\alpha_2,\beta_2)\neq (1,0), (3,0), (0,1), (1,1)$, then   $\operatorname{dim} Z^2(L_2,l,r) =5,$
$\operatorname{dim} B^2(L_2,l,r) =5;$
\item If $\alpha_1=-\alpha_2=-1,$ $\beta_1=\beta_2=0$, then  $\operatorname{dim} Z^2(L_2,l,r) =5,$
$\operatorname{dim} B^2(L_2,l,r) =3;$
\item If $\alpha_1=-\alpha_2,$ $\beta_1=-\beta_2,$ $(\alpha_1,\beta_1)\neq (-1,0),$ then $\operatorname{dim} Z^2(L_2,l,r) =4,$
$\operatorname{dim} B^2(L_2,l,r) =4;$

 \end{itemize}
\end{cor}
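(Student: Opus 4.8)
The statement is a counting consequence of Proposition~\ref{L2}, which exhibits an explicit basis of $Z^2(L_2,l,r)$ in each admissible case, together with Proposition~\ref{B2}, which records the coboundary map $f\mapsto df$ for $L_2$. The plan is therefore to carry out the two dimension counts separately, case by case, and then read off the pair $(\dim Z^2,\dim B^2)$ in each of the seven cases.

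First I would read off $\dim Z^2(L_2,l,r)$ directly from Proposition~\ref{L2}: in each listed case the cocycle $\omega$ is written in terms of a list of free scalars $b_{i,j}$ which determine $\omega$ completely and are subject to no further relations, so $\dim Z^2(L_2,l,r)$ equals the number of distinct symbols $b_{i,j}$ appearing. A quick inspection gives $6$ in the cases $\alpha_2=1$; $\alpha_2=3$; $\alpha_1=\alpha_2=\beta_1=0,\ \beta_2=1$; and $\alpha_1=\beta_1=0,\ \alpha_2=\beta_2=1$, gives $5$ in the generic case $\alpha_1=\beta_1=0$ with $(\alpha_2,\beta_2)$ avoiding the four exceptional values, gives $5$ in the case $\alpha_1=-\alpha_2=-1,\ \beta_1=\beta_2=0$, and gives $4$ in the remaining diagonal case $\alpha_1=-\alpha_2,\ \beta_1=-\beta_2,\ (\alpha_1,\beta_1)\neq(-1,0)$.

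Second I would compute $\dim B^2(L_2,l,r)$ as the rank of the linear map $d\colon\Hom(L_2,\mathfrak h)\cong\KK^5\to Z^2(L_2,l,r)$ sending $f$ to $df$, that is, as $5-\dim\ker d$, substituting into the formulas of Proposition~\ref{B2} the values of $\alpha_1,\alpha_2,\beta_1,\beta_2$ relevant to the case. For example, when $\alpha_1=\beta_1=\beta_2=0,\ \alpha_2=1$, the relations $df(e_1,e_1)=c_3$, $df(e_2,x_1)=-c_2$, $df(x_1,e_1)=-c_1$, $df(x_1,x_1)=-c_4$, $df(x_2,x_1)=-c_5$ already force $c_1=c_2=c_3=c_4=c_5=0$, so $d$ is injective and $\dim B^2(L_2,l,r)=5$; the other cases are identical in spirit, the kernel picking up exactly those coordinates $c_i$ annihilated by the coefficients $(1-\alpha_2)$, $-\alpha_2$, $-(1+\alpha_1)$, $-(\alpha_1+\alpha_2)$, $-(\beta_1+\beta_2)$, $\dots$ that vanish for the case at hand (for instance $c_4$ survives in the case $\alpha_1=\alpha_2=\beta_1=0,\ \beta_2=1$, giving $\dim B^2=4$).

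The one place that calls for care — and the only real obstacle — is in the two generic cases, where one must check that the excluded parameter values are exactly those for which the stated dimension would fail: the inequality $(\alpha_2,\beta_2)\neq(1,0)$ is precisely what lets $(1-\alpha_2)c_1=0$ and $\beta_2c_1=0$ force $c_1=0$, similarly $(\alpha_2,\beta_2)\neq(0,1)$ handles $c_2$, and in the diagonal case $(\alpha_1,\beta_1)\neq(-1,0)$ is what forces $c_1=0$ while the two relations coming from $df(x_1,x_2)$ and $df(x_2,x_1)$ collapse to a single constraint on $(c_4,c_5)$; throughout it is understood that the purely central situation $l=r=0$ is not among the cases considered. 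Assembling the two counts then yields the asserted seven pairs.
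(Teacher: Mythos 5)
Your proposal is correct and follows exactly the (implicit) route of the paper: $\operatorname{dim} Z^2(L_2,l,r)$ is the number of free parameters $b_{i,j}$ in each case of Proposition~\ref{L2}, and $\operatorname{dim} B^2(L_2,l,r)=5-\dim\ker d$ is read off from the specialization of Proposition~\ref{B2}; your caveat about excluding the degenerate central case $l=r=0$ from the two generic bullets is a sensible (and needed) reading of the statement. One small correction to your illustrative example: in the case $\alpha_1=\alpha_2=\beta_1=0,\ \beta_2=1$ it is $c_2$ that survives in $\ker d$ (all of $-\alpha_2c_2$, $(1-\beta_2)c_2$, $-\alpha_1c_2$, $-\beta_1c_2$ vanish), whereas $c_4$ is killed by $df(x_1,x_2)=-\beta_2c_4=-c_4$; the conclusion $\operatorname{dim} B^2(L_2,l,r)=4$ is unaffected.
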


\begin{thm}
Let $\widehat{L}_2$ be an extension of the solvable Leibniz algebra $L_2$ by the abelian algebra $\mathfrak{h}=\langle e_4 \rangle.$ Then
   $\widehat{L}_2$ is isomorphic to one of the following non-isomorphic algebras:
  $$\widehat{L}^1_2:\begin{array}{lllll} [e_1,e_1]=e_3,& [e_1,x_1]=e_1, & [x_1,e_1]=-e_1, & [e_3,x_1]=2e_3,\\ [1mm] [x_2,e_1]=e_4, & [e_2,x_2]=e_2, & [e_4,x_1]=e_4.\end{array} $$
   $$\widehat{L}^2_2:\begin{array}{lllll} [e_1,e_1]=e_3,& [e_3,e_1]=e_4,& [e_1,x_1]=e_1, & [x_1,e_1]=-e_1, \\ [1mm] [e_3,x_1]=2e_3, & [e_2,x_2]=e_2, & [e_4,x_1]=3e_4.\end{array} $$
   $$\widehat{L}^3_2:\begin{array}{lllll} [e_1,e_1]=e_3,& [e_1,x_1]=e_1, &[x_1,e_1]=-e_1,& [e_2,x_1]=e_4, \\ [1mm]
  [e_3,x_1]=2e_3,&  [e_2,x_2]=e_2, & [e_4,x_2]=e_4.\end{array} $$
 $$\widehat{L}^4_2(\delta):\begin{array}{lllll} [e_1,e_1]=e_3,& [e_1,x_1]=e_1, &[x_1,e_1]=-e_1,& [e_2,x_1]=\delta e_4,  \\ [1mm]
  [e_3,x_1]=2e_3, & [e_1,x_2]=e_4,& [e_2,x_2]=e_2+e_4, & [e_4,x_2]=e_4.\end{array} $$
   $$\widehat{L}^5_2:\begin{array}{lllll} [e_1,e_1]=e_3,& [e_2,e_1]=e_4,& [e_1,x_1]=e_1, &[x_1,e_1]=-e_1, \\ [1mm]
  [e_3,x_1]=2e_3, & [e_2,x_2]=e_2,& [e_4,x_1]=e_4 & [e_4,x_2]=e_4.\end{array} $$
 $$\widehat{L}^6_2:\begin{array}{lllll} [e_1,e_1]=e_3,& [e_1,x_1]=e_1+e_4, &[x_1,e_1]=-e_1-e_4, \\ [1mm]
  [e_3,x_1]=2e_3,&  [e_2,x_2]=e_2, & [e_4,x_1]=e_4,& [x_1,e_4]=-e_4.\end{array} $$
 $$\widehat{L}^7_2(\delta):\begin{array}{lllll} [e_1,e_1]=e_3,& [e_1,x_1]=e_1+\delta e_4, &[x_1,e_1]=-e_1-\delta e_4, \\ [1mm]
  [e_3,x_1]=2e_3, & [e_1,x_2]=e_4,& [x_2,e_1]=-e_4,\\ [1mm] [e_2,x_2]=e_2, & [e_4,x_1]=e_4,& [x_1,e_4]=-e_4.\end{array} $$
 \end{thm}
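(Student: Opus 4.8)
The plan is to follow the template already used in the proofs of Theorems~\ref{thmH} and~\ref{thmL1}. The inputs are the cocycle list of Proposition~\ref{L2}, the coboundary list of Proposition~\ref{B2}, and the dimension count of Corollary~\ref{corL.2}; by Proposition~\ref{prop2.8} the classification of one-dimensional abelian extensions of $L_2$ is equivalent to describing the $\operatorname{Aut}(L_2)\times\operatorname{Aut}(\mathfrak{h})$-orbits in $\bigcup_{l,r}H^2(L_2,l,r)$, where $\operatorname{Aut}(L_2)$ is given in Proposition~\ref{Aut3-dim} (with parameters $a_1,a_2,a_3$) and $\psi\in\operatorname{Aut}(\mathfrak{h})$ acts by $\psi(e_4)=\lambda e_4$.

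First I would discard the split families. By Corollary~\ref{corL.2}, $H^2(L_2,l,r)=0$ both when $\alpha_1=\beta_1=0$ with $(\alpha_2,\beta_2)\notin\{(1,0),(3,0),(0,1),(1,1)\}$ and when $\alpha_1=-\alpha_2$, $\beta_1=-\beta_2$ with $(\alpha_1,\beta_1)\neq(-1,0)$; these yield only split extensions and are set aside, just as in Theorems~\ref{thmH} and~\ref{thmL1}. There remain the six representation families listed in the first six items of Corollary~\ref{corL.2}.

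Next, for each of those six families I would read off from Propositions~\ref{L2} and~\ref{B2} an explicit basis of $H^2(L_2,l,r)$ and compute the action of $(\varphi,\psi)$ on its coefficients using the formulas \eqref{l98}--\eqref{l99}. The three families with $\alpha_1=\beta_1=0$ and $(\alpha_2,\beta_2)\in\{(1,0),(3,0),(1,1)\}$ have $\dim H^2(L_2,l,r)=1$: a single cocycle coefficient survives, it is rescaled by a nonzero monomial in $\lambda,a_1,a_2$, hence normalizes to $1$, and reading the induced bracket on $\widehat{L}_2=L_2\oplus\{e_4\}$ off the normalized cocycle together with the maps $l,r$ produces $\widehat{L}^1_2$, $\widehat{L}^2_2$ and $\widehat{L}^5_2$, respectively. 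The two remaining families, $\alpha_1=\alpha_2=\beta_1=0$ with $\beta_2=1$ and $\alpha_1=-\alpha_2=-1$ with $\beta_1=\beta_2=0$, have $\dim H^2(L_2,l,r)=2$; in each of them the $\operatorname{Aut}(L_2)\times\operatorname{Aut}(\mathfrak{h})$-action is diagonal on the two cohomology coordinates with a common nonzero scalar (no mixing occurs because the automorphisms of $L_2$ fix $x_2$ and only shear $x_1$ by elements of the nilradical that annihilate the chosen representatives). Hence the orbit of $\delta_1\overline{\omega_1}+\delta_2\overline{\omega_2}$ is determined by whether $\delta_2=0$: if $\delta_2=0$ one gets a single algebra ($\widehat{L}^3_2$, resp.\ $\widehat{L}^6_2$), and if $\delta_2\neq0$ only the ratio $\delta=\delta_1/\delta_2$ survives, yielding the one-parameter families $\widehat{L}^4_2(\delta)$, resp.\ $\widehat{L}^7_2(\delta)$.

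Finally I would assemble these seven algebras and verify that they are pairwise non-isomorphic. Different representation families are separated by the scalars $\alpha_1,\alpha_2,\beta_1,\beta_2$, which are read off as eigenvalues of the operators $\operatorname{ad}_{x_1}^{L},\operatorname{ad}_{x_1}^{R},\operatorname{ad}_{x_2}^{L},\operatorname{ad}_{x_2}^{R}$ acting on $\mathfrak{h}=\langle e_4\rangle$; within a family the algebras are distinguished by finer invariants, such as whether $e_4$ lies in the center of the nilradical and whether these adjoint operators are semisimple. The main obstacle will be precisely this last step for the parametric families: one must show that distinct values of $\delta$ in $\widehat{L}^4_2(\delta)$ and in $\widehat{L}^7_2(\delta)$ give non-isomorphic algebras. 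The decisive point is that $\operatorname{Aut}(L_2)$ fixes $x_2$ and has no $x_2$-shearing term (in contrast with $\operatorname{Aut}(L_3)$), so the relative coefficient of the two cohomology generators cannot be absorbed and is a genuine isomorphism invariant.
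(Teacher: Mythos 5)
Your proposal is correct and follows essentially the same route as the paper, whose own proof simply defers to the template of Theorems \ref{thmH} and \ref{thmL1}: discard the families with $H^2(L_2,l,r)=0$, normalize the surviving one-dimensional classes to obtain $\widehat{L}^1_2,\widehat{L}^2_2,\widehat{L}^5_2$, and split the two two-dimensional cases by whether $\delta_2=0$ to obtain $\widehat{L}^3_2,\widehat{L}^4_2(\delta)$ and $\widehat{L}^6_2,\widehat{L}^7_2(\delta)$. (Only a trivial bookkeeping slip: after removing the two zero-cohomology items of Corollary \ref{corL.2} there remain five nonsplit families, not six, which is what you in fact go on to treat.)
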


\begin{proof}
The proof is similar to that of Theorems \ref{thmH} and \ref{thmL1}. \end{proof}

\subsection{Extensions of the algebra $L_3$}
Similarly to the previous subsections, we obtain one-dimensional abelian extensions of the algebra $L_3$ as follows.

 \begin{prop}{\label{L3}}
A basis of $Z^2(L_3,l,r)$ consists of the following cocycles:
\begin{itemize}
\item[1.] $\alpha_1=\beta_1=\beta_2=0,$ $\alpha_2=1:$
$$\begin{array}{lllll}\omega(e_1,e_1)=b_{1,1}, & \omega(e_2,x_1)=b_{5,2} ,& \omega(e_2,x_2)=-b_{5,2} , \\ [1mm]
\omega(e_3,x_1)=b_{1,1}, &
\omega(x_1,e_1)=b_{4,1}, & \omega(x_1,x_1)=b_{4,4}, \\ [1mm]
 \omega(x_2,e_1)=b_{5,1}, &\omega(x_2,e_2)=b_{5,2},& \omega(x_2,x_1)=b_{5,4}. \\ [1mm]
\end{array}$$

\item[2.] $\alpha_1=\beta_1=\beta_2=0,$ $\alpha_2=3:$
$$\begin{array}{lllll}\omega(e_1,e_1)=b_{1,1}, & \omega(e_1,x_1)=2b_{4,1},& \omega(e_2,x_1)=3b_{5,2},& \omega(e_2,x_2)=-b_{5,2}, \\ [1mm] \omega(e_3,e_1)=b_{3,1},&
 \omega(e_3,x_1)=-b_{1,1}, & \omega(x_1,e_1)=b_{4,1}, & \omega(x_1,x_1)=b_{4,4}, \\ [1mm]
 \omega(x_2,e_2)=b_{5,2},& \omega(x_2,x_1)=b_{5,4}. \\ [1mm]
\end{array}$$
\item[3.] $\alpha_1=\alpha_2=\beta_1=0, $ $\beta_2=1:$
$$\begin{array}{lllll}\omega(e_1,e_1)=b_{1,1}, & \omega(e_1,x_1)=-b_{4,1},& \omega(e_1,x_2)=b_{4,1},
& \omega(e_3,x_1)=2b_{1,1}, \\ [1mm]
\omega(e_3,x_2)=-b_{1,1}, &
\omega(x_1,e_1)=b_{4,1}, & \omega(x_1,e_2)=b_{4,2}, & \omega(x_1,x_2)=b_{4,5}, \\ [1mm]
 \omega(x_2,e_2)=b_{5,2},&
  \omega(x_2,x_2)=b_{5,5}. \\ [1mm]
\end{array}$$
\item[4.] $\alpha_1=\alpha_2=\beta_1=0, $ $\beta_2=2:$
$$\begin{array}{lllll}\omega(e_1,e_1)=b_{1,1},  & \omega(e_1,x_1)=-b_{4,1},& \omega(e_1,x_2)=2b_{4,1}, &\omega(e_2,e_2)=b_{2,2}, \\ [1mm]
\omega(e_2,x_2)=b_{5,2},& \omega(e_3,x_1)=2b_{1,1},&\omega(e_3,x_2)=-2b_{1,1},  & \omega(x_1,e_1)=b_{4,1},\\[1mm]
\omega(x_1,x_2)=b_{4,5}, &\omega(x_2,e_2)=b_{5,2},&
 \omega(x_2,x_2)=b_{5,5}. \\ [1mm]
\end{array}$$
\item[5.] $\alpha_1=\beta_1=0, $ $(\alpha_2,\beta_2)\neq (1,0),(3,0),(0,1),(0,2):$
$$\begin{array}{lllll}\omega(e_1,e_1)=b_{1,1}, & \omega(e_1,x_1)=(\alpha_2-1)b_{4,1},& \omega(e_1,x_2)=\beta_2b_{4,1},\\ [1mm] \omega(e_2,x_1)=\alpha_2b_{5,2},&
 \omega(e_2,x_2)=(\beta_2-1)b_{5,2},&\omega(e_3,x_1)=(2-\alpha_2)b_{1,1}, \\ [1mm]
 \omega(e_3,x_2)=-\beta_2b_{1,1},&  \omega(x_1,e_1)=b_{4,1}, &
\omega(x_1,x_1)=\alpha_2b_{4,5},\\ [1mm]
\omega(x_1,x_2)=\beta_2 b_{4,5},&\omega(x_2,e_2)=b_{5,2},& \omega(x_2,x_1)=\alpha_2b_{5,5},\\ [1mm]
\omega(x_2,x_2)=\beta_2b_{5,5}.
\end{array}$$
\item[6.] $\alpha_1=-\alpha_2=-1,$ $\beta_1=\beta_2= 0:$
 $$\begin{array}{llllll} \omega(e_1,e_1)=b_{1,1}, &\omega(e_1,x_1)=-b_{4,1},& \omega(e_1,x_2)=-b_{5,1},&\omega(e_2,x_1)=b_{5,2},\\ [1mm] \omega(e_2,x_2)=-b_{5,2},&
 \omega(e_3,x_1)=b_{1,1}, &\omega(x_1,e_1)=b_{4,1},& \omega(x_1,e_2)=-b_{5,2},\\ [1mm]
 \omega(x_1,e_3)=b_{1,1},  &\omega(x_1,x_2)=b_{4,5},&
\omega(x_2,e_1)=b_{5,1}, &
\omega(x_2,e_2)=b_{5,2}, \\ [1mm] \omega(x_2,x_1)=-b_{4,5}.\end{array}$$
\item[7.] $\alpha_1=\alpha_2=0,$ $\beta_1=-\beta_2=-1:$
 $$\begin{array}{llllll} \omega(e_1,e_1)=b_{1,1}, &\omega(e_1,x_1)=-b_{4,1},& \omega(e_1,x_2)=b_{4,1},&\omega(e_2,x_1)=-b_{4,2},\\ [1mm] \omega(e_2,x_2)=-b_{5,2},&
 \omega(e_3,x_1)=2b_{1,1}, &\omega(e_3,x_2)=-b_{1,1}, & \omega(x_1,e_1)=b_{4,1},\\ [1mm] \omega(x_1,e_2)=b_{4,2},&
\omega(x_1,x_2)=b_{4,5},&
\omega(x_2,e_1)=-b_{4,1}, &
\omega(x_2,e_2)=b_{5,2}, \\ [1mm] \omega(x_2,e_3)=b_{1,1}, &\omega(x_2,x_1)=-b_{4,5}.\end{array}$$
\item[8.] $\alpha_1=-\alpha_2,$ $\beta_1=-\beta_2,$ $(\alpha_1,\beta_1)\neq(-1,0),(0,-1):$
$$\begin{array}{llllll} \omega(e_1,e_1)=b_{1,1}, & \omega(e_1,x_1)=-(1+\alpha_1)b_{5,1},& \omega(e_1,x_2)=-\beta_1b_{5,1},\\ [1mm] \omega(e_2,x_1)=-\alpha_1b_{5,2},&
\omega(e_2,x_2)=-(1+\beta_1)b_{5,2},&
\omega(e_3,x_1)=(2+\alpha_1)b_{1,1}, \\ [1mm]
\omega(e_3,x_2)=\beta_1b_{1,1}, & \omega(x_1,e_1)=(1+\alpha_1)b_{5,1},&
\omega(x_1,e_2)=\alpha_1b_{5,2},\\ [1mm]
\omega(x_1,e_3)=-\alpha_1 b_{1,1},&\omega(x_1,x_2)=b_{4,5},&
\omega(x_2,e_1)=\beta_1b_{5,1},\\ [1mm]
\omega(x_2,e_2)=(1+\beta_1)b_{5,2}, &
  \omega(x_2,e_3)=-\beta_1 b_{1,1},  & \omega(x_2,x_1)=-b_{4,5}.\end{array}$$

\end{itemize}
\end{prop}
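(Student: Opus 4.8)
The plan is to repeat, for $L_3$, the computations carried out in the proofs of Propositions~\ref{Z2forH} and~\ref{Z2forL1}. For $\omega\in Z^2(L_3,l,r)$ I write $\omega(x,y)=b_{i,j}e_4$, $1\le i,j\le 5$, with $x,y\in\{e_1,e_2,e_3,x_1,x_2\}$. Imposing condition~\eqref{c3} on the pair $(l,r)$ together with $N=\langle e_1,e_2,e_3\rangle\subseteq\operatorname{ker}l\cap\operatorname{ker}r$ yields, exactly as for $H$ and $L_1$, the system
\begin{equation*}
\alpha_1(\alpha_1+\alpha_2)=0,\quad \alpha_1(\beta_1+\beta_2)=0,\quad \beta_1(\alpha_1+\alpha_2)=0,\quad \beta_1(\beta_1+\beta_2)=0,
\end{equation*}
so that either $\alpha_1=\beta_1=0$, or $\alpha_1=-\alpha_2$ and $\beta_1=-\beta_2$.

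Next I evaluate the identity $d^2_{\omega,l,r}(x,y,z)=0$ of~\eqref{c4} on every ordered triple of basis vectors. Since the only nonzero products of $L_3$ are $[e_1,e_1]=e_3$, $[e_1,x_1]=e_1$, $[x_1,e_1]=-e_1$, $[e_3,x_1]=2e_3$, $[e_2,x_2]=e_2$ and $[x_2,e_2]=-e_2$, each triple gives either the trivial relation or a single linear equation in the $b_{i,j}$ whose coefficients are polynomials in $\alpha_2,\beta_2$ (when $\alpha_1=\beta_1=0$) or in $\alpha_1,\beta_1$ (when $\alpha_1=-\alpha_2$ and $\beta_1=-\beta_2$); I then collect all of them. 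Two features govern which exceptional parameter values occur: the relation $(\alpha_2-3)b_{3,1}=0$, coming from $[e_1,e_1]=e_3$ and $[e_3,x_1]=2e_3$, makes $\omega(e_3,e_1)$ a genuine parameter precisely when $\alpha_2=3$; and the relation $(2-\beta_2)b_{2,2}=0$, which appears because the product $[x_2,e_2]=-e_2$ present in $L_3$ (but not in $L_2$) converts what would otherwise be $(1-\beta_2)b_{2,2}=0$ into $(2-\beta_2)b_{2,2}=0$, makes $\omega(e_2,e_2)$ a genuine parameter precisely when $\beta_2=2$.

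In the branch $\alpha_1=\beta_1=0$ the relations express $b_{1,4},b_{1,5},b_{5,2}$ (or $b_{2,4}$), $b_{3,4},b_{4,1},b_{4,2},b_{4,3},b_{5,3}$ and the entries involving $e_3$ in terms of the surviving free parameters, together with vanishing conditions of the type $(\alpha_2-1)b_{\ast}=0$, $(\alpha_2-3)b_{\ast}=0$, $(\beta_2-1)b_{\ast}=0$, $(\beta_2-2)b_{\ast}=0$ and $\alpha_2 b_{\ast}=\beta_2 b_{\ast\ast}$; treating the exceptional pairs $(\alpha_2,\beta_2)\in\{(1,0),(3,0),(0,1),(0,2)\}$ and the generic situation one at a time produces the cocycle families~1--5. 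In the branch $\alpha_1=-\alpha_2$, $\beta_1=-\beta_2$ the relations additionally impose antisymmetry-type identities $b_{i,3}=-b_{3,i}$, $b_{i,1}=-b_{1,i}$, $b_{4,i}=-b_{i,4}$ and $b_{5,i}=-b_{i,5}$, which reduce the number of free parameters, and residual conditions whose coefficients $1+\alpha_1$, $1+\beta_1$, $\alpha_1$, $\beta_1$ isolate the exceptional pairs $(\alpha_1,\beta_1)\in\{(-1,0),(0,-1)\}$ and the generic case, producing families~6--8.

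The only real difficulty is bookkeeping: one must run through the $5^{3}$ instances of $d^2_{\omega,l,r}=0$ without slip and keep exact track of which combinations of $(\alpha_1,\alpha_2,\beta_1,\beta_2)$ make a given coefficient vanish, so that precisely the right extra parameters survive in each degenerate branch and the bases asserted in items~1--8 are recovered. No conceptual ingredient beyond those already used for $H$ and $L_1$ is needed.
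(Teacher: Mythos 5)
Your proposal is correct and takes essentially the same route as the paper, whose own proof of this proposition is the single sentence that it ``follows a similar approach to Propositions \ref{Z2forH} and \ref{Z2forL1}''; you carry out the same case split ($\alpha_1=\beta_1=0$ versus $\alpha_1=-\alpha_2$, $\beta_1=-\beta_2$) and the same exhaustive evaluation of the cocycle identity on basis triples. Your added remarks correctly identify the relations $(3-\alpha_2)b_{3,1}=0$ and $(2-\beta_2)b_{2,2}=0$ that generate the exceptional cases $\alpha_2=3$ and $\beta_2=2$, so your write-up is in fact somewhat more informative than the paper's.
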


\begin{proof}
The proof is similar to that of Propositions \ref{Z2forH} and \ref{Z2forL1}. \end{proof}

\begin{cor}\label{corL.3} We have the following:
\begin{itemize}
\item if $\alpha_1=\beta_1=\beta_2=0$, $\alpha_2=1,$  then  $\operatorname{dim} Z^2(L_3,l,r) =6,$
$\operatorname{dim} B^2(L_3,l,r) =5;$
\item if $\alpha_1=\beta_1=\beta_2=0$, $\alpha_2=3,$ then  $\operatorname{dim} Z^2(L_3,l,r) =6,$
$\operatorname{dim} B^2(L_3,l,r) =5;$
\item if $\alpha_1=\alpha_2=\beta_1=0,$ $\beta_2=1$, then  $\operatorname{dim} Z^2(L_3,l,r) =6,$
$\operatorname{dim} B^2(L_3,l,r) =5;$
\item if $\alpha_1=\alpha_2=\beta_1=0,$ $\beta_2=2$, then  $\operatorname{dim} Z^2(L_3,l,r) =6,$
$\operatorname{dim} B^2(L_3,l,r) =5;$
\item if $\alpha_1=\beta_1=0,$ $(\alpha_2,\beta_2)\neq (1,0),(3,0),(0,1), (1,1)$, then  $\operatorname{dim} Z^2(L_3,l,r) =5,$
$\operatorname{dim} B^2(L_3,l,r) =5;$
\item if $\alpha_1=-\alpha_2=-1,$ $\beta_1=\beta_2=0$, then  $\operatorname{dim} Z^2(L_3,l,r) =5,$
$\operatorname{dim} B^2(L_3,l,r) =3;$
\item if $\alpha_1=\alpha_2=0,$ $\beta_1=-\beta_2=-1$, then  $\operatorname{dim} Z^2(L_3,l,r) =5,$
$\operatorname{dim} B^2(L_3,l,r) =3;$
\item if $\alpha_1=-\alpha_2,$ $\beta_1=-\beta_2,$  $(\alpha_1,\beta_1)\neq(-1,0),(0,-1)$ then $\operatorname{dim} Z^2(L_3,l,r) =4,$
$\operatorname{dim} B^2(L_3,l,r) =4.$

 \end{itemize}
\end{cor}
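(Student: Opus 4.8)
The corollary is a bookkeeping consequence of Propositions \ref{B2} and \ref{L3}, and the plan is to read both dimensions off the data already tabulated and subtract to obtain $\operatorname{dim}H^2$. For $\operatorname{dim}Z^2(L_3,l,r)$ I would simply count, in each of the eight cases of Proposition \ref{L3}, the number of independent scalar parameters $b_{i,j}$ occurring in the displayed basis: the four cases in which exactly one of $\alpha_2,\beta_2$ is a nonzero integer ($1$, $3$, $1$, $2$ respectively, with $\alpha_1=\beta_1=0$) each carry six such parameters; the generic case $\alpha_1=\beta_1=0$ carries five; the two cases $\alpha_1=-\alpha_2=-1,\ \beta_1=\beta_2=0$ and $\alpha_1=\alpha_2=0,\ \beta_1=-\beta_2=-1$ carry five; and the remaining case $\alpha_1=-\alpha_2,\ \beta_1=-\beta_2$ with $(\alpha_1,\beta_1)$ generic carries four. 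This reproduces the list $6,6,6,6,5,5,5,4$.

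For $\operatorname{dim}B^2(L_3,l,r)$ I would use the description in Proposition \ref{B2}, which realizes the coboundary operator as a linear map $d\colon\operatorname{Hom}(\mathfrak g,\mathfrak h)\cong\KK^5\to Z^2(L_3,l,r)$, $f\mapsto df$, written out explicitly in the coordinates $c_1,\dots,c_5$ (where $f(e_i)=c_ie_4$, $f(x_1)=c_4e_4$, $f(x_2)=c_5e_4$) and depending polynomially on $\alpha_1,\alpha_2,\beta_1,\beta_2$. Then $\operatorname{dim}B^2(L_3,l,r)=5-\operatorname{dim}\ker d$, and $\ker d$ is the set of tuples $(c_1,\dots,c_5)$ killing every listed value $df(\cdot,\cdot)$; I would run through the eight cases. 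In each of the cases with $\alpha_1=\beta_1=0$ one reads off $df(x_1,e_1)=-c_1$ and $df(e_1,e_1)=c_3$ together with entries detecting $c_2$, $c_4$, $c_5$ (such as $df(e_2,x_1)=-\alpha_2c_2$ or $df(e_2,x_2)=(1-\beta_2)c_2$, and $df(x_1,x_1)=-\alpha_2c_4$ or $df(x_1,x_2)=-\beta_2c_4$), so $\ker d=0$ and $\operatorname{dim}B^2=5$. In the cases $\alpha_1=-\alpha_2=-1,\ \beta_1=\beta_2=0$ and $\alpha_1=\alpha_2=0,\ \beta_1=-\beta_2=-1$ one uses $\alpha_1=-\alpha_2$, $\beta_1=-\beta_2$ and checks that exactly two coordinates stay unconstrained (for the first, $c_1$ and $c_4$, since $df(x_1,e_1)$ and $df(x_1,x_1)$ vanish identically there, while $df(e_2,x_1)$, $df(e_1,e_1)$, $df(x_1,x_2)$ force $c_2=c_3=c_5=0$), so $\operatorname{dim}\ker d=2$ and $\operatorname{dim}B^2=3$; in the last case a single coordinate survives in $\ker d$, giving $\operatorname{dim}B^2=4$.

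Everything here is elementary finite linear algebra. The one point that calls for care --- and which I would flag as the main, minor obstacle --- is the rank count in the cases with $\alpha_1\neq0$ or $\beta_1\neq0$: there several coboundary expressions become proportional or vanish outright (for instance $df(x_1,x_2)=-df(x_2,x_1)$ once $\alpha_1=-\alpha_2$ and $\beta_1=-\beta_2$, and various $df(x_1,e_i)$ degenerate to zero), so one must genuinely compute $\operatorname{rank}d$ rather than count nonzero rows of the coboundary table. Subtracting the two tallies then gives the corollary, and in particular shows $H^2(L_3,l,r)=0$ precisely in the generic case $\alpha_1=\beta_1=0$.
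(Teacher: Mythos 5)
Your proposal is correct and is exactly the argument the paper intends: the corollary is stated as an immediate consequence of Propositions \ref{B2} and \ref{L3}, obtained by counting the free parameters $b_{i,j}$ in each listed basis of $Z^2(L_3,l,r)$ and computing the rank of the coboundary map $f\mapsto df$ on $\operatorname{Hom}(\mathfrak{g},\mathfrak{h})\cong\KK^5$ case by case. Your kernel computations (trivial kernel in the $\alpha_1=\beta_1=0$ cases, two-dimensional kernel in cases VI--VII, one-dimensional kernel in the last case) match the paper's dimension counts.
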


\begin{thm} Let $\widehat{L}_3$ be an extension of the solvable Leibniz algebra $L_3$ by the abelian algebra $\mathfrak{h}=\langle e_4 \rangle.$ Then
   $\widehat{L}_3$ is isomorphic to one of the following non-isomorphic algebras:
$$\widehat{L}^1_3:\begin{array}{lllll} [e_1,e_1]=e_3,& [e_1,x_1]=e_1, &[x_1,e_1]=-e_1, & [x_2,e_1]=e_4,\\ [1mm]
  [e_3,x_1]=2e_3, & [e_2,x_2]=e_2,& [x_2,e_2]=-e_2,& [e_4,x_1]=e_4. \end{array} $$
$$\widehat{L}^2_3:\begin{array}{lllll} [e_1,e_1]=e_3,& [e_3,e_1]=e_4,& [e_1,x_1]=e_1, &[x_1,e_1]=-e_1, \\ [1mm]
  [e_3,x_1]=2e_3, & [e_2,x_2]=e_2,& [x_2,e_2]=-e_2,& [e_4,x_1]=3e_4. \end{array} $$
$$\widehat{L}^3_3:\begin{array}{lllll} [e_1,e_1]=e_3,& [e_1,x_1]=e_1, & [x_1,e_1]=-e_1,& [x_1,e_2]=e_4,  \\ [1mm] [e_3,x_1]=2e_3, & [e_2,x_2]=e_2, & [x_2,e_2]=-e_2, &[e_4,x_2]=e_4.\end{array} $$
 $$\widehat{L}^4_3:\begin{array}{lllll} [e_1,e_1]=e_3,& [e_2,e_2]=e_4,& [e_1,x_1]=e_1, & [x_1,e_1]=-e_1, \\ [1mm] [e_3,x_1]=2e_3, & [e_2,x_2]=e_2, & [x_2,e_2]=-e_2, &[e_4,x_2]=2e_4.\end{array} $$
 $$\widehat{L}^5_3:\begin{array}{lllll} [e_1,e_1]=e_3,& [e_1,x_1]=e_1+e_4, &[x_1,e_1]=-e_1-e_4, & [e_3,x_1]=2e_3,\\ [1mm]
  [e_2,x_2]=e_2,&[x_2,e_2]=-e_2, & [e_4,x_1]=e_4,& [x_1,e_4]=-e_4.\end{array} $$
 $$\widehat{L}^6_3(\delta):\begin{array}{lllll} [e_1,e_1]=e_3,& [e_1,x_1]=e_1+\delta e_4, &[x_1,e_1]=-e_1-\delta e_4,
  &[e_3,x_1]=2e_3, \\ [1mm]
  [e_1,x_2]=e_4,& [x_2,e_1]=-e_4,& [e_2,x_2]=e_2, & [x_2,e_2]=-e_2, \\ [1mm] [e_4,x_1]=e_4,& [x_1,e_4]=-e_4.\end{array} $$
$$\widehat{L}^7_3:\begin{array}{lllll} [e_1,e_1]=e_3,& [e_1,x_1]=e_1, &[x_1,e_1]=-e_1, & [e_3,x_1]=2e_3,\\ [1mm]
    [e_2,x_1]=e_4,&  [x_1,e_2]=-e_4,&[e_2,x_2]=e_2,&[x_2,e_2]=-e_2, \\ [1mm]
     [e_4,x_2]=e_4,& [x_2,e_4]=-e_4.\end{array} $$
 $$\widehat{L}^8_3(\delta):\begin{array}{lllll} [e_1,e_1]=e_3,& [e_1,x_1]=e_1, &[x_1,e_1]=-e_1,
  &[e_3,x_1]=2e_3, \\ [1mm]
  [e_2,x_1]=\delta e_4,& [x_1,e_2]=-\delta e_4,& [e_2,x_2]=e_2+e_4, & [x_2,e_2]=-e_2-e_4, \\ [1mm] [e_4,x_2]=e_4,& [x_2,e_4]=-e_4.\end{array} $$

 \end{thm}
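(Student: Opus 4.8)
The plan is to mirror the proofs of Theorems \ref{thmH} and \ref{thmL1} step by step. First, using Corollary \ref{corL.3}, I would isolate the parameter regimes with nontrivial cohomology: in the two generic regimes (items $5$ and $8$ of Proposition \ref{L3}) one has $\dim Z^2(L_3,l,r)=\dim B^2(L_3,l,r)$, so $H^2(L_3,l,r)=0$ there and, by Proposition \ref{prop2.8}, these contribute no extension distinct from $L_3$. Only the six special regimes of Proposition \ref{L3} need to be examined, one at a time.

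In each of the four regimes with $\alpha_1=0$ (items $1$–$4$) we have $\dim H^2(L_3,l,r)=1$. Comparing the cocycle list of Proposition \ref{L3} against the coboundary formulas of Proposition \ref{B2}, the surviving class is represented by $\omega(x_2,e_1)=e_4$, $\omega(e_3,e_1)=e_4$, $\omega(x_1,e_2)=e_4$ and $\omega(e_2,e_2)=e_4$, respectively. Applying a general automorphism $\varphi\in\operatorname{Aut}(L_3)$ from Proposition \ref{Aut3-dim} together with $\psi(e_4)=\lambda e_4$ and the transformation rule \eqref{l98}, the single coefficient of each generator gets multiplied by a nonzero monomial in $\lambda,a_1,a_2$ (the parameters $a_3,a_4$ leaving it unchanged), so one orbit exhausts the punctured line and the coefficient normalizes to $1$. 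Defining the bracket on $L_3\oplus\{e_4\}$ by $[x,y]=[x,y]_{L_3}+\omega(x,y)$ and adjoining the values of $l_{x_i},r_{x_i}$ on $e_4$ prescribed by $\alpha_i,\beta_i$ then yields $\widehat{L}^1_3$, $\widehat{L}^2_3$, $\widehat{L}^3_3$ and $\widehat{L}^4_3$, in turn.

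The remaining two regimes are $\alpha_1=-\alpha_2=-1$, $\beta_1=\beta_2=0$ (item $6$) and $\alpha_1=\alpha_2=0$, $\beta_1=-\beta_2=-1$ (item $7$), where Corollary \ref{corL.3} gives $\dim H^2(L_3,l,r)=2$. Writing a general class as $\delta_1\overline{\omega_1}+\delta_2\overline{\omega_2}$ for the two generators of Proposition \ref{L3}, a direct computation with \eqref{l98} shows that $\operatorname{Aut}(L_3)\times\operatorname{Aut}(\mathfrak{h})$ acts by $(\delta_1,\delta_2)\mapsto(\mu\delta_1,\mu\delta_2)$ with one and the \emph{same} nonzero scalar $\mu$ in both slots, because $L_3$ — in contrast to $H$ — admits no automorphism interchanging $x_1$ and $x_2$. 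Hence $\delta_1/\delta_2$ is invariant: the orbit of $(\delta_1,0)$ with $\delta_1\neq0$ normalizes to $(1,0)$ and produces $\widehat{L}^5_3$ (resp. $\widehat{L}^7_3$), while for $\delta_2\neq0$ one normalizes $\delta_2$ to $1$ and is left with the one-parameter family $\widehat{L}^6_3(\delta)$ (resp. $\widehat{L}^8_3(\delta)$), $\delta=\delta_1/\delta_2$. I expect this two-dimensional case to be the main obstacle: one must check that $H^2$ admits no further collapse and that distinct $\delta$ give genuinely non-isomorphic algebras, both of which follow from the explicit orbit description together with Proposition \ref{prop2.8}. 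It remains only to verify that the eight resulting algebras are pairwise non-isomorphic, which I would do using the eigenvalues of the left and right multiplications by $x_1,x_2$ on $e_4$, the position of $e_4$ in the lower central and derived series of the extension, $\dim Z(\widehat{L}_3)$, and — inside a parametric family — the value of $\delta$, all readable off the multiplication tables.
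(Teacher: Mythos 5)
Your proposal is correct and follows essentially the same route as the paper, which itself only says that the proof "follows a similar approach to Theorems \ref{thmH} and \ref{thmL1}": you discard the two generic regimes where $\dim Z^2(L_3,l,r)=\dim B^2(L_3,l,r)$, identify the correct one-dimensional representatives $\omega(x_2,e_1)$, $\omega(e_3,e_1)$, $\omega(x_1,e_2)$, $\omega(e_2,e_2)$ in the four regimes with $\dim H^2=1$, and in the two regimes with $\dim H^2=2$ correctly observe that $\operatorname{Aut}(L_3)\times\operatorname{Aut}(\mathfrak h)$ rescales both coordinates by the same nonzero factor (since, unlike $H$, the algebra $L_3$ has no automorphism swapping $x_1$ and $x_2$), so that the ratio $\delta$ is an invariant and one obtains the families $\widehat{L}^6_3(\delta)$ and $\widehat{L}^8_3(\delta)$. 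This matches the eight algebras of the statement, so no gap to report.
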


\begin{proof} The proof is similar to that of Theorems \ref{thmH} and \ref{thmL1}. \end{proof}

\end{document}